\newtheorem{theorem}{Theorem}[section]
\newtheorem{lemma}{Lemma}[section]
\newtheorem{definition}{Definition}[section]
\newtheorem{remark}{Remark}[section]
\newtheorem{proposition}{Proposition}[section]
\numberwithin{equation}{section}
\newcommand{\RNum}[1]{\uppercase\expandafter{\romannumeral #1\relax}}
\title{Porous Medium Type Reaction-Diffusion Equation: Large Time Behaviors and Regularity of Free Boundary}
\author{Qingyou He\thanks{E-mail: qyhe.cnu.math@qq.com (Q. Y. He)}}
   \affil{School of Mathematical Sciences,
	Capital Normal University, Beijing 100048, P.R. China}
\date{}           
\begin{document}
\maketitle
\begin{abstract}
We consider the Cauchy problem of the porous medium type reaction-diffusion equation 
\begin{equation*}
\partial_t\rho=\Delta\rho^m+\rho g(\rho),\quad (x,t)\in \mathbb{R}^n\times \mathbb{R}_+,\quad n\geq2,\quad m>1,
\end{equation*}
where $g$ is  the given monotonic decreasing function with the  density critical threshold $\rho_M>0$ satisfying $g(\rho_M)=0$.  We prove that the pressure  $P:=\frac{m}{m-1}\rho^{m-1}$  in $L_{loc}^{\infty}(\mathbb{R}^n)$ tends  to the pressure critical threshold $P_M:=\frac{m}{m-1}(\rho_M)^{m-1}$ at the  time decay rate $(1+t)^{-1}$. If the initial density $\rho(x,0)$ is compactly supported,  we justify that the support $\{x: \rho(x,t)>0\}$ of the  density $\rho$ expands exponentially in time. Furthermore, we  show that there exists a time $T_0>0$ such that the pressure $P$ is Lipschitz continuous for $t>T_0$, which is the optimal (sharp) regularity of the pressure, and the free surface $\partial \{(x,t): \rho(x,t)>0\}\cap \{t>T_0\}$ is locally Lipschitz continuous. In addition,  under the same initial assumptions of compact support,  we verify  that the free boundary $\partial \{(x,t): \rho(x,t)>0\}\cap \{t>T_0\}$ is a local $C^{1,\alpha}$ surface. 
\end{abstract} 

\noindent{\bf Key-words.} Reaction-diffusion equation; Porous medium equation; Large time behaviors; Lipschitz's regularity; $C^{1,\alpha}$ regularity of free surface.
\\
2020 {\bf MSC.} 35A24; 35B35; 35B40; 35B51; 35B65; 35D30; 35E10; 35G55; 35K10; 35K55; 35K57; 35K65; 35R35; 35Q92.
\tableofcontents
\section{Introduction}
The nonlinear reaction-diffusion equation is the governing equation for many very important nonlinear models that are used to describe various processes in biology,  ecology, physics,  chemistry, and so on, readers can refer to~\cite{BD2009, CD2017B,FKA1937,PERTHAME2014,WTP1997} and references therein. For instance in tumor growth, the reaction-diffusion equation with degenerate diffusion in~\cite{BD2009,PERTHAME2014} simulates that the cell proliferation is governed by the contact inhibition of bio-mechanical form, and the cell division  stops as the cell density reaches the critical threshold.  In this article, we consider the Cauchy problem of the porous medium type reaction-diffusion equation, written as
\begin{equation}\label{de}
\begin{cases}
\partial_t\rho=\Delta \rho^m+\rho g(\rho),\quad &(x,t)\in \mathbb{R}^n\times\mathbb{R}_+,\ n\geq 2,\\
\rho(x,0)=\rho_0(x)\geq 0,&x\in\mathbb{R}^n,
\end{cases}
\end{equation}
where  $\rho\geq0$ denotes the density, $m>1$ (slow diffusion) stands for the diffusion exponent, and $g$ is a given monotonic decreasing function with the density critical threshold $\rho_M>0$ satisfying $g(\rho_M)=0$. The another important variable for the porous medium type equation~$\eqref{de}_1$ is the pressure  $P$ defined by 
\begin{equation}\label{pressure}
P:=\frac{m}{m-1}\rho^{m-1},\quad  P(x,0):=P_{0}(x)=\frac{m}{m-1}\rho_0^{m-1}\ (\text{initial pressure}).
\end{equation}  For convenience, we set
\begin{equation}\label{Gg}
G(P):=g(\rho)
\text{ with }
G(P_M)=0,
\end{equation}
where $P_M$ is the corresponding pressure critical threshold given by 
\begin{equation}
P_M:=\frac{m}{m-1}(\rho_M)^{m-1}.
\end{equation}
By a direct computation, the equation of the pressure $P$ is expressed as 
\begin{equation}\label{pe}
\partial_t P=(m-1)P\big(\Delta P+G(P)\big)+\nabla P\cdot\nabla P,\quad (x,t)\in \mathbb{R}^n\times\mathbb{R}_+.
\end{equation}

If the initial density $\rho_0\geq0$ is compactly supported, the weak solution $\rho\geq 0$ to the Cauchy problem~\eqref{de}  remains compactly supported for all the time $t>0$; cf.~\cite{PERTHAME2014}. Naturally, an  interesting and important problem is to study the regularity of free (support) boundary $\partial\{(x,t): \rho(x,t)>0\}$ for the Cauchy problem~\eqref{de}. Indeed, we formally derive from~$\eqref{de}_1$ that
\begin{equation*}
\partial_t \rho=\nabla \rho\cdot \nabla P=|\nabla \rho| |\nabla P|\quad \text{on }\partial\{(x,t):\ \rho(x,t)>0\},
\end{equation*}
and  the motion of the free boundary  $\partial\{(x,t):\ \rho(x,t)>0\}$ is determined by Darcy's law, i.e., 
\begin{equation}\label{dracy}
V(x,t)=\frac{\partial_t \rho}{|\nabla \rho|}=|\nabla P(x,t)|\quad\text{on }\partial\{(x,t): \rho(x,t)>0\},
\end{equation}
where $V(x,t)$ is the outer normal velocity on the free boundary, which was verified in~\cite{JKT2021} by the method of optimal transport.  \\

Gavrilut and Morosanuin~\cite{GM2018} established the global well-posedness of the porous medium type reaction-diffusion equation~$\eqref{de}_1$ with  the Cauchy-Neumann boundary conditions, and the uniqueness  was obtained in~\cite{PERTHAME2014} through the comparison principle. In addition, for the porous medium type reaction-diffusion equation~$\eqref{de}_1$,  there were also many important previous works on the traveling wave or the spreading speed; cf.~\cite{HLM2010,LP2020,MMBA2010,SM1995,SJA2010,WB2016,XJMY2018,DQZ2020}. \\

The main purpose of this article is to study  the regularity of free boundary $\partial\{(x,t) : \rho(x,t)>0\}$ for the Cauchy problem~\eqref{de} of the porous medium type reaction-diffusion equation including the tumor growth model~\cite{PERTHAME2014} with $G(P)=P_M-P$ and the Fisher-KPP equation~\cite{DQZ2020} with $g(\rho)=1-\rho$. To this end,  as initiated in~\cite{CW1990}, we need not only  to obtain the large time asymptotic behaviors, but also to get Lipschitz's continuity of the pressure $P$~\eqref{pressure} for the Cauchy problem~\eqref{de}. \\

         Let us recall some important works on the large time behaviors~\cite{DQZ2020,PMEs,ZB1937},  and Lipschitz's continuity~\cite{ACV1985,CVW1987}  of the pressure $P$~\eqref{pressure} for the Cauchy problem of the  porous medium type  equation like~\eqref{PME}. The large time asymptotic behaviors of the non-negative weak solution to the porous medium equation(PME),
\begin{equation}\label{PME}
\partial_t\rho=\Delta\rho^m,\quad (x,t)\in\mathbb{R}^n\times\mathbb{R}_+,
\end{equation}
 was described by the self-similar (Barenblatt's) solution; cf.~\cite{PMEs,ZB1937}.  The porous medium type equation~$\eqref{de}_1$ with source term $\rho g(\rho):=\rho(1-\rho)$ is  the Fishier-KPP equation.  Authors in~\cite{DQZ2020} proved that the solution to the Cauchy problem for the Fisher-KPP equation converged locally to the density critical threshold $\rho_M=1$ as the time went to infinity,  expressed precisely as follows:
\begin{equation}\label{fkpppme}
\lim\limits_{t\to\infty}\rho(x,t)=1\quad \text{uniformly in }\{|x|\leq c_*t-c^*\log t\},
\end{equation}
where $c_*>0$ was the critical wave speed meaning the existence of wavefronts for all $c\geq c_*$ and $c^*>0$ is the corrector constant.  The pioneer work on Lipschitz's regularity for one-dimensional PME~\cite{ACV1985} told us that Lipschitz's continuity of the pressure for the Cauchy problem~\eqref{de} was the optimal (sharp) regularity. To the best of our knowledge,  \cite{CVW1987,DHL2001} were the only two  results including Lipschitz's continuity of the pressure for  the Cauchy problem of the n-dimensional ($n\geq2$) porous medium type equation~\eqref{PME} up to now. Indeed, from the perspective of the linearly non-degeneracy estimate near the free boundary (refer to Lemma~\ref{lin-degenerate}); cf.~\cite{CW1990,KZ2021}, we were also able to conclude  that Lipschitz's continuity was the optimal (sharp) regularity for the pressure of the PME~\eqref{PME}.  \\
      
There were many fundamental works on the regularity of the free boundary $\partial\{(x,t): \rho(x,t)>0\}$ for the $n$-dimensional ($n\geq2$) porous medium type equation~\eqref{PME}; refer to~\cite{ACV1985,CW1990,DH1997,DH1998,DHL2001,KZ2021} and references therein. Caffarelli et al  showed the $C^{1,\alpha}$ regularity of free boundary  for the multi-dimensional PME~\cite{CW1990}.   In addition,  P. Daskalopoulos et al developed a geometric approach and proved that the solution to the PME was smooth to the surface and the free boundary was $C^\infty$ smooth for all the time under  the root-concavity assumption of the initial pressure; cf.~\cite{DH1997,DH1998,DHL2001}. For the PME with a drift, the $C^{1,\alpha}$ regularity of free boundary  was  obtained under the assumptions of cone monotonicity and so-called \emph{weaker Lipschitz's regularity}; cf.~\cite{KZ2021}.  It should be emphasized that authors in~\cite{KZ2021} developed a new (i.e., inf-convolution) method to establish a local linearly non-degeneracy estimate near the free surface, which was also applicable to the PME~\eqref{PME}. \\

As  the diffusion exponent $m$ tends to infinity, the limit of the weak solution $\rho$ is called as the Hele-Shaw limit. Indeed, as $m\to+\infty$, authors in ~\cite{PERTHAME2014,KM2022}  derived from $\eqref{de}_1$  to a Hele-Shaw type system
\begin{equation}\label{HS}
\begin{cases}
\partial_t\rho_\infty=\Delta P_\infty+\rho_\infty G(P_\infty),&\\
P_\infty(1-\rho_\infty)=0,\quad 0\leq\rho_\infty\leq 1,&\\
P_\infty(\Delta P_\infty+G(P_\infty))=0,\quad \text{in }\mathbb{R}^n\times\mathbb{R}_+,&
\end{cases}
\end{equation}
where $\rho_\infty,P_\infty$ are the Hele-Shaw limits of the density $\rho$ and the pressure $P$ respectively. There were many important previous works on the Hele-Shaw  limits~\cite{DAVID2021,DPSV2021,DS2020,KT2018,KM2022,GKM2022,LX2021,PERTHAME2014,PV2015}, and on the Hele-Shaw problem~\cite{FTXZ2022,JKT2022,KT2021,MELLET2017}  (i.e., the Cauchy problem for~\eqref{HS} with the compactly supported initial density).  Perthame et al studied the Hele-Shaw asymptotic (limit) of the tumor growth model~\cite{PERTHAME2014}, and also did it for this model with nutrients~\cite{DAVID2021} by the method of weak solution. The Hele-Shaw  limit of the tumor growth with a drift was justified in~\cite{DS2021}, and the decay rate on the diffusion exponent~$m$ was obtained in~\cite{DDB2022}. For the tumor growth model with the motion governed by Brinkman's pressure law,   an optimal  uniform decay rate of the density and the pressure in $m$ was established in~\cite{KT2018}  through the viscosity solution approach,   the Hele-Shaw  limits of the two-species case  were proved in~\cite{DPSV2021,DS2020} by the compactness technique. Kim et al verified the Hele-Shaw  limit of the porous medium equation with the non-monotonic reaction terms through the variation of the obstacle problem; cf.~\cite{GKM2022,KM2022}. The existence of the weak solution and the free boundary (Hele-Shaw) limit of a tissue growth model with autophagy or necrotic core were established in~\cite{BS2022,LX2021} respectively.  In addition, the Hele-Shaw limits for the  chemotaxis were shown in \cite{CKY2018,HLP2022,HLP2023}. Mellet, Perthame and Quir\'os in~\cite{MELLET2017} obtained the regularity properties of the solution and the smoothness of the free boundary for the Hele-Shaw problem of tumor growth. Furthermore, the stability and  regularity of the solution, and the regularity of free boundary for the Hele-Shaw   problem  of the tumor growth with nutrients were proven in~\cite{JKT2022, KL2022,KT2021}. Recently, authors in~\cite{FTXZ2022} studied the tumor boundary instability of the Hele-Shaw   problem with nutrient consumption and supply.  \\

 On the one hand, compared with the case of the PME~\eqref{PME}, both the absence of the scaling invariant and the mass conservation,  and the presence of  the logistic growth effect for the Cauchy problem~\eqref{de} give rise to some new challenges. On the other hand, after a time integration for the pressure, the method of obstacle problem is well applied to study the regularity of the free boundary for the Hele-Shaw  problem like~\eqref{HS} of the tumor growth; cf.~\cite{JKT2022,MELLET2017,KL2022}.  However, this method from the Hele-Shaw problem~\eqref{HS} is not  applied to the case of the porous medium  type reaction-diffusion equation~\eqref{de}, that's because the stronger degeneration near the free boundary $\partial\{(x,t): \rho(x,t)>0\}$ means that the classical second order blow-up analysis technique originating from the obstacle problem~\cite{CLA1977} is too complicated to be applicable.\\

In the present paper, we first prove the large time asymptotic behaviors of the pressure $P$ for the Cauchy problem \eqref{de}. In particular, the pressure $P$ in $L_{loc}^\infty(\mathbb{R}^n)$ converges to the pressure critical threshold $P_M$ at the algebraic time decay rate $(1+t)^{-1}$ in time. Then, if the initial density $\rho_0$ is compactly supported,  there exits a time $T_0>0$, we show that the pressure $P$ is time-space Lipschitz continuous after the time $T_0$, where Lipschitz's continuity of the pressure is optimal (sharp), and furthermore the free boundary $\partial \{(x,t),\rho(x,t)>0\}\cap\{t>T_0\}$ is locally time-space Lipschitz continuous. Finally, we mainly follow the approach of~\cite{CW1990,KZ2021} to verify the local $C^{1,\alpha}$ smoothness of the free boundary $\partial\{(x,t): \rho(x,t)>0\}\cap \{\ t>T_0\}$. To the best of our knowledge, this paper is the first work on studying the time convergence rate of the solution, and the regularity of the pressure and its free (support) boundary for the porous medium type reaction-diffusion equation~\eqref{de}.\\

We give the definitions of weak solutions for the Cauchy problem~\eqref{de}.
\begin{definition}[Weak solution] Let $\rho_0 \in L^1(\mathbb{R}^n)$ with $0\leq \rho_{0}(x)\leq \rho_H$ ($\rho_H\geq\rho_M$). We call that the non-negative function $\rho: \mathbb{R}^n\times (0,\infty)\to[0,\infty)$ is a weak sub-solution (resp. sup-solution) to the Cauchy problem~\eqref{de} for the porous medium type reaction-diffusion equation if it holds for any $T>0$ and any non-negative test function $\varphi\in C_c^\infty(\mathbb{R}^n\times(0,T)) $ that
\begin{equation}\label{weaksolution}
\begin{cases}
\rho\in C([0,T];L^1(\mathbb{R}^n)),\quad\rho^m\in L^2([0,T]; \dot{H}^1(\mathbb{R}^n))\quad \text{and }\ \  g(\rho)\in L^\infty_{loc}(\mathbb{R}^n\times[0,T]),&\\
\int_{0}^{T}\hskip-4pt\int_{\mathbb{R}^n}\rho\partial_t\varphi dxdt\geq (\text{resp. } \leq) -\int_{0}^{T}\hskip-4pt\int_{\mathbb{R}^n}\rho^m\Delta \varphi dxdt-\int_{0}^{T}\hskip-4pt\int_{\mathbb{R}^n}\rho g(\rho)\varphi dxdt.&\\
\end{cases}
\end{equation}
Moreover, if \eqref{weaksolution} holds,  we claim that the corresponding pressure $P=\frac{m}{m-1}\rho^{m-1}$ is a weak sub-solution (resp. sup-solution) to the Cauchy problem of the pressure equation~\eqref{pe}. We call $\rho$ a weak solution to the Cauchy problem~\eqref{de} if and only if $\rho$ is both a weak sup-solution and a weak sub-solution to the Cauchy problem~\eqref{de}.
\end{definition}

In the rest of this paper, we use the definition of ball, denoted precisely by 
\begin{equation*}
B_R(x_0):=\{x\in\mathbb{R}^n:\ |x-x_0|\leq R\},\quad B_R:=\{x\in\mathbb{R}^n:\ |x|\leq R\}  \text{ for }x_0\in\mathbb{R}^n\text{ and }R>0.
\end{equation*}

 \noindent \emph{Organization of this paper.}\ The rest of the present paper is  organized as follows. In Section~\ref{mr}, we introduce the main results of this article. In Section~\ref{LTA}, we show the large time asymptotic behaviors of the solution to the Cauchy problem~\eqref{de}.   In Section~\ref{LC}, we prove Lipschitz's continuity of the pressure for the Cauchy problem~\eqref{de}.  In Section~\ref{fbr}, we justify that the free boundary~$\partial\{(x,t): \rho(x,t)>0\}$ for the Cauchy problem~\eqref{de} is a local $C^{1,\alpha}$ surface.
\\

\section{Main results}\label{mr}
We state main theorems of this paper. To ensure the strict expansion of the support as the time increases strictly,  we need the following initial assumptions:
\begin{equation}\label{i1}
\Omega_{0}:=\{x:\ P_0(x)>0\} \text{ is a bounded domain with Lipschitz's boundary},
\end{equation}
\begin{equation}\label{i2}
P_{0}(x)\geq C_0(d(x))^{2-\delta},\quad 0<\delta<1,\quad  C_0>0,
\end{equation}
where $d(x)$  denotes the distance between $x\in\Omega_0$ and $\partial \Omega_0$, readers can refer to~\cite{ACK1983} and references therein for the detailed  mechanism of the strict expansion.

\vspace{2mm}

 Lipschitz's continuity of the pressure for the Cauchy problem~\eqref{de}, which is the optimal (sharp) regularity for the porous medium type equation inspired by~\cite{ACV1985}, plays an important role in establishing the regularity of the free boundary $\partial\{(x,t): \rho(x,t)>0\}$.

\begin{theorem}[Sharp continuity]\label{t3}  Under the initial assumptions~\eqref{i1}-\eqref{i2} and $0\leq P_0\leq P_H$ for some $P_H\geq P_M$ satisfying $\inf\limits_{P\in[0,P_H]}G(P)-PG'(P)\geq 0$, $G'(P)\leq 0$ for $P\in[0,P_H]$, and $G(\cdot)\in \mathcal{C}^1(0,P_H]$, and assume that the pressure $P$ to Cauchy problem~\eqref{de} uniformly converges to the threshold $P_M$ in $L^\infty_{loc}(\mathbb{R}^n)$ as the time goes to infinity. Let    $R_0:=\inf\{R>0:supp(P_0)\subset B_{R}(0)\}$, then there exists
\begin{equation}\label{T_0}
T_0:=\inf\{t>0:B_{R_0}(0)\subset supp\big({P(t)\big)}\}
\end{equation}such that, for any $\delta>0$, $P$ is Lipschitz (sharp) continuous in $\mathbb{R}^n\times(T_0+\delta,\infty)$. To be more precise, it holds for the constant $C>0$ depending on $G(0),T_0, \delta,  d_G, P_M, m$ and the initial assumptions that
\begin{align*}
|\nabla P(x,t)|\leq C\max\{1,|x|\},\quad |\partial_t P(x,t)|\leq C\max\{1,|x|^2\},\quad (x,t)\in \mathbb{R}^n\times(T_0+\delta,\infty).
\end{align*}
\end{theorem}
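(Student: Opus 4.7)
The plan is to prove the gradient and time-derivative bounds by combining a sharp Aronson--B\'enilan semiconvexity estimate with a Bernstein-type gradient estimate and an algebraic reading of the pressure equation.

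\textbf{Step 1 (Aronson--B\'enilan semiconvexity).} Introduce $w := \Delta P + G(P)$, so that \eqref{pe} rewrites as $\partial_t P = (m-1) P w + |\nabla P|^2$. Differentiating this identity in $t$, and using the relation $\nabla \Delta P = \nabla w - G'(P)\nabla P$ to substitute derivatives of $\Delta P$, routine expansion yields
\begin{align*}
\partial_t w &= (m-1) P \Delta w + 2m\,\nabla P \cdot \nabla w + (m-1) w^2 \\
&\quad {}+ (m-1)\, w\,(PG'(P) - G(P)) + 2 \sum_{i,j}(\partial_{ij} P)^2 - G'(P)|\nabla P|^2.
\end{align*}
The hypotheses $G' \le 0$ and $G - PG' \ge 0$ ensure that the last two terms are non-negative, and the term $(m-1) w (PG' - G)$ is also non-negative whenever $w \le 0$. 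Comparison with the explicit sub-solution $\phi(t) = -1/((m-1)t)$ of the reduced ODE $\phi' = (m-1)\phi^2$ then gives the sharp one-sided estimate
\begin{equation*}
\Delta P(x,t) + G(P(x,t)) \ge -\frac{1}{(m-1)t}, \qquad (x,t) \in \mathbb{R}^n \times (0,\infty).
\end{equation*}

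\textbf{Step 2 (gradient bound).} The assumed $L^\infty_{loc}$ convergence $P \to P_M$, together with a maximum-principle comparison against a spatial super-solution of the form $P_M + \lambda (1+|x|^2)$, yields $0 \le P(x,t) \le C(1+|x|^2)$. For $t > T_0 + \delta$, the strict expansion of the support (initiated at $T_0$) shows that $B_{R_0} \subset \{P(\cdot,t) > 0\}$ and that $P$ is bounded below by a positive constant on any compact subset of $B_{R_0}$, which turns \eqref{pe} into a uniformly parabolic equation there. I would then apply Bernstein's technique: derive a parabolic inequality for the auxiliary quantity $\Psi := \eta^2 |\nabla P|^2$ with a parabolic cut-off $\eta$ localized around a point $(x_0, t_0) \in \{P > 0\}$, use Step 1 to control the sign of the Laplacian-type terms, and invoke the maximum principle to obtain $|\nabla P(x,t)| \le C_R$ on $B_R \times (T_0 + \delta, \infty)$. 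Careful tracking of the $R$-dependence (the constant scales affinely with the size of the support) produces the stated bound $|\nabla P(x,t)| \le C \max\{1,|x|\}$.

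\textbf{Step 3 (time derivative and main obstacle).} Substituting the gradient bound from Step 2 and the lower bound from Step 1 into
\begin{equation*}
\partial_t P = (m-1) P (\Delta P + G(P)) + |\nabla P|^2,
\end{equation*}
and using the upper bound on $P$ together with $|G(P)| \le G(0)$, yields $|\partial_t P(x,t)| \le C \max\{1,|x|^2\}$, completing the theorem. The hardest point in the whole argument is the gradient estimate up to $\partial\{P > 0\}$: interior to the positivity set the equation is uniformly parabolic and Bernstein's method is classical, but on the free boundary the coefficient $(m-1)P$ degenerates and the Bernstein functional fails to remain bounded without additional care. Handling this requires marrying the Bernstein inequality with a local comparison against a Barenblatt-type barrier modified to absorb the reaction $G$. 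The hypothesis $t > T_0 + \delta$ is exactly what makes such a comparison feasible: once the support has strictly expanded past its initial configuration, the free boundary moves at a uniformly bounded Darcy speed (recall \eqref{dracy}), leaving room to place barriers on both sides of the free surface.
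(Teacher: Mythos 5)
Your Step~1 (Aronson--B\'enilan) matches the paper's Lemma~\ref{lAB} and is correct, and your Step~3 is a legitimate way to extract the $\partial_t P$ bound once Step~2 is in hand. However, Step~2 is where the real content of the theorem lives, and your proposal does not actually carry it out. You yourself flag the obstacle --- "on the free boundary the coefficient $(m-1)P$ degenerates and the Bernstein functional fails to remain bounded without additional care" --- and the remedy you suggest, "marrying the Bernstein inequality with a local comparison against a Barenblatt-type barrier," is left as a gesture rather than an argument. A Barenblatt-type barrier controls $P$ pointwise from above, not $|\nabla P|$; it is not clear how it helps a Bernstein functional that blows up as $P \to 0$. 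Worse, your closing justification --- that after $T_0+\delta$ "the free boundary moves at a uniformly bounded Darcy speed (recall \eqref{dracy}), leaving room to place barriers" --- is circular: the Darcy speed is $|\nabla P|$ restricted to the free boundary, which is precisely the Lipschitz estimate you are trying to prove. So the step that you correctly identify as "the hardest point in the whole argument" has not been closed.

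The paper takes a genuinely different route, avoiding any interior Bernstein estimate near the boundary. It adapts the Caffarelli--V\'azquez--Wolanski scaling-comparison method of~\cite{CVW1987}. Since the naive scaling $P_\epsilon(x,t) = \frac{1}{1+\epsilon}P((1+\epsilon)x,(1+\epsilon)t+t_0)$ that works for the PME is broken by the reaction term, the paper first applies the exponential change of variable $v(x,t) := e^{-(m-1)G(0)t}P(x,t)$ and a nonlinear time-reparametrization $\beta(\epsilon,t)$ (Lemma~\ref{subs2}) which together restore a one-sided scaling inequality $\mathcal{L}_1\bigl(\frac{1}{1+\epsilon}v((1+\epsilon)x,\beta(\epsilon,t)+t_0)\bigr)\le 0$. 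The matching comparison at the initial time $t_0$ is arranged via cone monotonicity (Lemma~\ref{conemon}) for $|x|\ge R_0$ and a smallness estimate on $\nabla\rho$ for $|x|\le R_0$, obtained by energy estimates and De~Giorgi iteration in Lemmas~\ref{l2est}--\ref{lip}, exploiting the assumed large-time convergence $P\to P_M$. Differentiating the resulting comparison at $\epsilon=0$ gives a pointwise upper bound on $\partial_t P$ in terms of $|\nabla P|\,|x|$ and $P$; pairing this with the Aronson--B\'enilan lower bound $\partial_t P \ge -P/(t+t_0)+|\nabla P|^2$ produces a quadratic inequality in $|\nabla P|$, from which both the $|\nabla P|\le C\max\{1,|x|\}$ and $|\partial_t P|\le C\max\{1,|x|^2\}$ bounds fall out algebraically, valid uniformly up to and across the free boundary with no need to treat the degenerate set separately. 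If you want to salvage the Bernstein route, you would at minimum need an a priori replacement for this scaling inequality --- something that controls the Bernstein quantity $|\nabla P|^2$ through the free boundary without already assuming the conclusion.
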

\begin{remark}We discuss two classical examples like  Fisher-KPP equation $g(\rho)=1-\rho$ and tumor growth $G(P)=P_M-P$. Since $g(\rho)=1-\rho$ of the Fisher-KPP equation doesn't satisfy the assumption~\eqref{G} for the diffusion exponent $1<m<2$,  the results on the large time behaviors as in Theorem~\ref{t1}  don't hold.  However,  we still can get Lipschitz's  (sharp)continuity of the pressure for the Fisher-KPP equation for any $m>1$ as in Theorem~\ref{t3}  by means of the large time behaviors~\eqref{fkpppme} obtained in~\cite{DQZ2020}. For tumor growth $G(P)=P_M-P$, Theorem~\ref{t1} can cover all the diffusion exponent $m>1$. In one word, if we assume that \eqref{G} holds, then the pressure for the Fisher-KPP equation and tumor growth is Lipschitz continuous after a finite time by Theorem \ref{t3}. 
\end{remark}

\vspace{2mm}

 Based on Theorem~\ref{t3},  we can verify the $C^{1,\alpha}$ regularity of free (support) boundary for the Cauchy problem~\eqref{de}.  To obtain the regularity of the free (support) boundary, we assume that there exists a constant $K_G>0$ such that 
\begin{equation}\label{KG}
K_G:=\sup\limits_{0\leq P_1\leq P_2\leq \frac{P_M}{2}}-P_1G'(P_2)\geq \sup\limits_{0\leq P\leq \frac{P_M}{2}}-PG'(P)>0,
\end{equation}
which holds not only for $G(P)=P_M-P$ from the tumor growth~\cite{PERTHAME2014} but also for $g(\rho)=1-\rho$ from the Fisher-KPP equation~\cite{DQZ2020}.  We need to define  \begin{equation*}
 A_{r}(x,t):=B_{r}(x)\times[t-r,t+r],\quad(x,t)\in\mathbb{R}^n\times (T_0,\infty).
 \end{equation*}

\begin{theorem}[Regularity of free boundary]\label{t2}  Under both the initial assumptions and the assumptions on $g(\rho)$ as in Theorem~\ref{t3}, and $g(\rho)$ additionally satisfies $\eqref{KG}$. Then, for any $(x_0,t_0)\in \Gamma:=\partial\{(x,t) : \rho(x,t)>0\}\cap\{t_0>T_0\}$, there exists $\delta',C_0',L,c_2>0$ such that the pressure $P$ satisfies the following estimates:
\begin{itemize}
\item[(1)] $|\nabla P|,\ |\partial_tP|\leq L$\quad  in $A_{\delta'}(x_0,t_0)$,
\item[(2)] $\nabla_{\mu}P,\ \partial_t P\geq c_2>0$\quad in $\{(x,t): P(x,t)>0\}\cap A_{\delta'}(x_0,t_0)$,
\item[(3)] $\Delta P\geq -C_0'$\quad  in $A_{\delta'}(x_0,t_0)$.
\end{itemize}
Furthermore, there exist $\alpha\in (0,1)$ and $\delta_0\in (0,\delta')$ depending only on the constants $L,C_0',c_2$ such that $\Gamma\cap A_{\delta_0}(x_0,t_0)$ is a $C^{1,\alpha}$ surface. 
\end{theorem}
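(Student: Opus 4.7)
The plan is to prove Theorem~\ref{t2} in two stages. First, I would establish the three pointwise estimates (1)--(3) in a spacetime neighborhood $A_{\delta'}(x_0,t_0)$ of a free boundary point; second, I would feed these estimates into a Caffarelli-type iterative scheme on the level sets of $P$ to conclude that $\Gamma$ is locally $C^{1,\alpha}$, following the overall architecture of \cite{CW1990,KZ2021}. The Lipschitz bound (1) is immediate from Theorem~\ref{t3} once $\delta'$ is small enough that $A_{\delta'}(x_0,t_0)\subset\mathbb{R}^n\times(T_0+\delta,\infty)$. The semi-convexity bound (3) should come from an Aronson--B\'enilan type computation: setting $w:=\Delta P+G(P)$ and differentiating the pressure equation~\eqref{pe} yields a parabolic inequality of the form $\partial_t w\geq (m-1)P\Delta w+\mathrm{l.o.t.}$, whose zero-order part has the right sign thanks to the structural hypothesis $\inf_{[0,P_H]}[G(P)-PG'(P)]\geq 0$; integrating from time $T_0$ onward produces a universal lower bound on $w$, hence on $\Delta P$.

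The non-degeneracy estimate (2) is the crux, and I would adapt the inf/sup-convolution construction of \cite{KZ2021}. Fix a candidate direction $\mu$ (geometrically, the outer spacetime normal to $\Gamma$ at $(x_0,t_0)$) and for small $h>0$ consider the shifted function $P_h(x,t):=P(x-h\mu,t)-c_2 h$. Using (1), (3), and the pressure equation~\eqref{pe} one verifies that $P_h$ is a subsolution of~\eqref{pe} up to an error produced by the reaction term; hypothesis~\eqref{KG} is used to bound this error by a constant multiple of $K_G h$, noting that near the free boundary $P$ is small so that $0\leq P\leq P_M/2$ and \eqref{KG} applies. The parabolic comparison principle then forces $P_h\leq P$, which is exactly $\nabla_\mu P\geq c_2$; the lower bound on $\partial_t P$ follows from~\eqref{pe} combined with (2) and (3). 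This step is the main analytic obstacle: unlike the pure PME, the source term $\rho g(\rho)$ lets the positivity set expand even when $\nabla P$ vanishes, so a naive Darcy-law argument gives no uniform positive lower bound and the structural absorption through $K_G$ is essential; the restriction $t>T_0$ and the large-time convergence $P\to P_M$ assumed in Theorem~\ref{t3} are what make the comparison barrier close.

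Given (1)--(3), the passage to $C^{1,\alpha}$ regularity is classical. At each $(y,s)\in\Gamma\cap A_{\delta'}(x_0,t_0)$ the non-degeneracy provides an open spacetime cone of monotonicity $\mathcal{C}(\mu,\theta)$ consisting of directions $\nu$ along which $\nabla_\nu P>0$ in the positivity set. A standard iteration/scaling argument, combining a Harnack-type improvement inside $\{P>0\}$ with the semi-convexity (3) and the Lipschitz bound (1), shows that when the problem is rescaled parabolically from scale $r$ to scale $\lambda r$ for a fixed $\lambda\in(0,1)$, the opening angle $\theta$ improves by a fixed multiplicative factor independent of $r$. Summing this geometric improvement yields H\"older continuity of the cone axis $\mu$, i.e., the $C^{1,\alpha}$ regularity of $\Gamma$ on a smaller neighborhood $A_{\delta_0}(x_0,t_0)$, with $\alpha\in(0,1)$ and $\delta_0\in(0,\delta')$ depending only on $L$, $C_0'$, $c_2$ and the dimension $n$.
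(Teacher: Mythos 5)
Your outline for (1) and (3) is consistent with the paper (Theorem~\ref{t3} plus the Aronson--B\'enilan estimate of Lemma~\ref{lAB}), and the iteration you sketch for the $C^{1,\alpha}$ conclusion is the right general scheme. The non-degeneracy step (2), however, does not close as you propose. The translation barrier $P_h(x,t)=P(x\mp h\mu,t)\mp c_2h$ cannot be compared with $P$ on the parabolic boundary: at a free boundary point $P$ vanishes, so closing the comparison would require either $c_2h\leq 0$ or $P(x\pm h\mu,\cdot)\leq c_2h$; the latter is exactly the linear growth rate near $\Gamma$ that one is trying to prove, so the argument is circular. Moreover, as written, $P(x-h\mu,t)-c_2h\leq P(x,t)$ only yields $\nabla_\mu P\geq -c_2$, the wrong sign.

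The paper's route to (2) is genuinely different and is the technical core of Section~\ref{fbr}. It first establishes the boundary growth rate $P(x+\varepsilon\mu,t)\geq\kappa_*\varepsilon$ on $\Gamma$ (Proposition~\ref{lindegs}), and this is not done by translation: it combines a De Giorgi/measure-theoretic alternative (Lemma~\ref{l18}, showing that if the average of $P$ over a ball that was previously empty is too small, then $P$ stays zero on a smaller concentric ball) with an inf-convolution supersolution adapted from~\cite{KZ2021} (Lemma~\ref{inf-del}, Proposition~\ref{p1}, Lemma~\ref{non-dege}); the reaction term is absorbed through the exponential time-weight $e^{N_0\varepsilon t}$ in the inf-convolution and the structural hypothesis~\eqref{KG}. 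The interior bounds $\nabla_\mu P,\ \partial_tP\geq c_2$ in $\{P>0\}$ are then recovered by applying a parabolic Harnack inequality to the linearized equation for $\nabla_\mu P$ (Lemma~\ref{deg-rate}), not by a barrier for $P$ itself. Finally, for the $C^{1,\alpha}$ iteration you should note that the rescaling $P_k(x,t)=J^{-k}P_{\delta_3}(J^kx,J^kt)$ does not leave~\eqref{pe} invariant: it produces a $k$-dependent source $(m-1)J^k\delta_3P_kG(J^k\delta_3P_k)$, and tracking its uniform absorbability for small $\delta_3$ is precisely the role of Lemmas~\ref{bound-data}--\ref{ini-rate} and Proposition~\ref{imp-mon}.
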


\begin{remark}
 The free (support) boundary of the tumor growth model $\&$ the Fisher-KPP equation for all diffusion exponent $m>1$ is  a local $C^{1,\alpha}$  surface after a finite time  due to Theorem~\ref{t3}.
\end{remark}

\begin{remark}
Based on Theorem~\ref{t2}, the boundary velocity  as shown in~\cite{CW1990} is the pressure gradient $\nabla P(x,t)$ on the free boundary $\Gamma(t):=\partial\{x: P(x,t)>0\} \cap\{t>T_0\}$, which is corresponding to Darcy's law~\eqref{dracy} originating from describing  the mechanism of  the gas through the porous medium. 
\end{remark}
\section{Large time behaviors}\label{LTA}
This section is devoted to studying the large time behaviors of the non-negative weak solution to the Cauchy problem \eqref{de}. Different from the case of the PME~\eqref{PME},  the absence of scaling invariance means the missing of self-similar solution,   and the presence of the nonlinear Lotka-Volterra source term $\rho g(\rho)$ means that there exist two constant equilibrium  states $0,\rho_M$. 
 \paragraph{Large time behaviors.}
 The compound effect of both the degenerate diffusion and the logistic growth effect is the main characteristic of the porous medium type reaction-diffusion equation~\eqref{de}.  We aim to prove that the solution $\rho$ tends to its critical threshold $\rho_M$ in time. To this end, we construct a new type weak sub-solution to the porous medium type reaction-diffusion equation~\eqref{de}, which locally tends to the critical threshold as the time goes to infinity.  For the reason of technique, we need to assume that there exists a constant $d_{G}>0$ for $m>1$ such that
\begin{equation}\label{G}
d_{G}:=\inf\limits_{\rho\in[0,\rho_M]}-g'(\rho)\frac{\rho^{2-m}}{m-1}=\inf\limits_{P\in[0,P_M]}[-G'(P)]>0,
\end{equation}
where both $G(P):=P_M-P$ from the tumor growth model~\cite{PERTHAME2014} and $g(\rho)=1-\rho$  from the Fisher-KPP equation with $m\geq2$~\cite{DQZ2020} satisfy~\eqref{G}. To construct the sub-solution, we need to introduce  the initial value problem for the Lotka-Volterra system~\cite[Section 12]{KM2001B}  as follows
 \begin{equation}\label{dy}
 \begin{cases}
 \dot{\alpha}(t)=(m-1)d_G\alpha(t)\big(P_M-\alpha(t)-\frac{2n}{d_G}\beta(t)\big),\quad t>0,&\\
 \dot{\beta}(t)=(m-1)d_G\beta(t)\big(P_M-\alpha(t)-\frac{2n(m-1)+4}{d_G(m-1)}\beta(t)\big),&\\
\alpha(0):=\alpha_0>0,\quad \beta(0):=\beta_0>0.&\\
 \end{cases}
 \end{equation}
\begin{theorem}[Large time asymptotic]\label{t1} Assume that $(\alpha(t),\beta(t))$ is the strong solution to the initial value problem~\eqref{dy} of the Lotka-Volterra system
 with the initial data $(\alpha_0, \beta_0)$ satisfying
\begin{equation}\label{ini1}
P_M-\frac{2n}{d_G}\beta_0>\alpha_0> P_M-\frac{2n(m-1)+4}{d_G(m-1)}\beta_0.
 \end{equation}
Then,  there exists a constant $C>0$ depending on $P_M,\alpha_0,\beta_0,m,d_G$ such that the optimal time decay rates of $(\alpha(t),\beta(t))$ hold as
\begin{equation*}
C^{-1}(1+t)^{-1}\leq  P_M-\alpha(t), \beta(t)\leq C(1+t)^{-1},\quad t>0. \end{equation*}
Furthermore, $(\alpha(t)-\beta(t)|x|^2)_+$ is a weak sub-solution to the Cauchy problem for the pressure equation~\eqref{pe} with  $g(\rho)$ satisfying~\eqref{G}. In addition, assume that the initial pressure $P_0(x)$ satisfies $(\alpha_0-\beta_0|x|^2)_+\leq P_0(x)\leq P_M$, $x\in \mathbb{R}^n$, then it holds
\begin{equation*}
\big(\alpha(t)-\beta(t)|x|^2\big)_+\leq P(x,t)\leq P_M,\quad (x,t)\in\mathbb{R}^n\times \mathbb{R}_+.
\end{equation*}
Furthermore, for any $R>0$, there exists a constant $C>0$ depending on $P_M,\alpha_0,\beta_0,m,d_G,R$ such that
\begin{equation*}
0\leq P_M-P(x,t)\leq C(1+t)^{-1},\quad (x,t)\in B_R\times \mathbb{R}_+.
\end{equation*}
\end{theorem}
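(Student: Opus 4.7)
The plan is to prove the four assertions in the order they appear: first the decay of the ODE system~\eqref{dy}, then the sub-solution property, then the comparison-based sandwich on $P$, and finally the uniform decay on balls. The first two steps are independent calculations; the last two glue them together.

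For the ODE analysis, the decisive observation is that the equilibrium $(\alpha,\beta)=(P_M,0)$ is non-hyperbolic: its linearization has eigenvalues $-(m-1)d_G P_M$ and $0$, with the center direction tangent to $P_M-\alpha=\tfrac{2n}{d_G}\beta$. This forces a polynomial (not exponential) approach to equilibrium. The hypothesis~\eqref{ini1} places the initial datum strictly inside the wedge bounded by the two nullclines $P_M-\alpha=\tfrac{2n}{d_G}\beta$ and $P_M-\alpha=\tfrac{2n(m-1)+4}{d_G(m-1)}\beta$, and a direct computation on each nullcline shows the vector field points back into the wedge, so the wedge is forward-invariant. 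Inside this wedge $\alpha\nearrow P_M$, $\beta\searrow 0$, and the ratio $r:=(P_M-\alpha)/\beta$ stays in $\bigl[\tfrac{2n}{d_G},\,\tfrac{2n(m-1)+4}{d_G(m-1)}\bigr]$; computing $\dot r$ and using $\beta\to 0$ shows $r\to \tfrac{2n}{d_G}$ in the tail. Substituted into the second equation of~\eqref{dy} this yields the Riccati asymptotic $\dot\beta\sim -4\beta^2$, and the matching upper and lower $(1+t)^{-1}$ bounds on $\beta$ follow by comparison with scalar ODEs $\dot z=-c_\pm z^2$; the bounds on $P_M-\alpha$ are then inherited through $r$.

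The sub-solution property is a one-shot algebraic identity. Writing $w(x,t):=\alpha(t)-\beta(t)|x|^2$, on $\{w>0\}$ one has $\partial_t w=\dot\alpha-\dot\beta|x|^2$, $\nabla w=-2\beta x$, $\Delta w=-2n\beta$, $|\nabla w|^2=4\beta^2|x|^2$. Substituting these and using $G(w)\ge d_G(P_M-w)$ on $[0,P_M]$, a one-line consequence of $G(P_M)=0$ and $G'\le -d_G$ in~\eqref{G}, reduces the desired inequality $\partial_t w\le(m-1)w\bigl(\Delta w+G(w)\bigr)+|\nabla w|^2$ to the system~\eqref{dy} plus a non-negative remainder $(m-1)d_G\,\beta\,|x|^2\,w\ge 0$. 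The coefficients $\tfrac{2n}{d_G}$ and $\tfrac{2n(m-1)+4}{d_G(m-1)}$ in~\eqref{dy} were chosen precisely so that both the constant and the $|x|^2$ terms balance. The truncation $(w)_+$ then inherits the weak sub-solution property because it is the pointwise maximum of the two weak sub-solutions $w$ (on $\{w>0\}$) and $0$.

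With the sub-solution in hand, the sandwich $(\alpha(t)-\beta(t)|x|^2)_+\le P(x,t)\le P_M$ follows from two applications of the comparison principle for~\eqref{de} (available under the existing hypotheses and inherited by~\eqref{pe} through the density formulation): the upper bound uses that the constant $P_M$ is a super-solution because $G(P_M)=0$, while the lower bound compares with $(w)_+$. For $x\in B_R$ one then writes $P_M-P(x,t)\le (P_M-\alpha(t))+\beta(t)R^2\le C(1+R^2)(1+t)^{-1}$ using the ODE decay. I expect the main obstacle to be the ODE analysis itself: because the equilibrium is degenerate, matching the upper and lower $(1+t)^{-1}$ rates requires the invariant-wedge argument to be sharp enough that the effective Riccati coefficient in front of $\beta^2$ stays bounded away from zero uniformly in $t$ — that is, one needs $r$ strictly below $\tfrac{2n(m-1)+4}{d_G(m-1)}$ for all $t$, not merely as $t\to\infty$, which is exactly what the equation for $\dot r$ delivers.
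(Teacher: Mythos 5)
Your overall architecture matches the paper's (sub-solution construction, comparison with $P_M$ and with $(\alpha-\beta|x|^2)_+$, and the final $B_R$ estimate are essentially identical), but your ODE decay analysis takes a genuinely different and somewhat heavier path. You linearize at $(P_M,0)$ to diagnose the degenerate (center) direction, track the ratio $r:=(P_M-\alpha)/\beta$, and extract the Riccati asymptotics $\dot\beta\sim-4\beta^2$ once $r\to 2n/d_G$. That is correct in spirit, but to get the \emph{uniform} two-sided $(1+t)^{-1}$ bound you must show $\sup_{t>0}r(t)$ is \emph{strictly} below $b:=\tfrac{2n(m-1)+4}{d_G(m-1)}$ (not merely $r<b$ pointwise); this requires a small extra argument — e.g. from $\dot r=(m-1)d_G[\alpha(a-r)+r\beta(b-r)]$ with $a:=\tfrac{2n}{d_G}$, one sees $\dot r<0$ whenever $\tfrac{r-a}{b-r}>\tfrac{b\beta_0}{\alpha_0}$, so $\sup r\le\max\{r(0),\,\tfrac{a\alpha_0+b^2\beta_0}{\alpha_0+b\beta_0}\}<b$ — which you gesture at but do not carry out. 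The paper sidesteps all of this with a single exact identity: $\tfrac{d}{dt}(\alpha/\beta)=4\alpha$, which is immediate from the fact that the two bracketed coefficients in~\eqref{dy} differ by precisely $\tfrac{4}{d_G(m-1)}\beta$. Since $\alpha_0\le\alpha\le P_M$, integrating gives $\alpha_0 t+\alpha_0/\beta_0\le\alpha/\beta\le P_M t+\alpha_0/\beta_0$, from which $\beta\asymp(1+t)^{-1}$ falls out with explicit constants and no invariance/tail analysis; the bounds on $P_M-\alpha$ then follow by sandwiching between the logistic equations for $f_1=\alpha+\tfrac{2n}{d_G}\beta$ and $f_2=\alpha+\tfrac{2n(m-1)+4}{d_G(m-1)}\beta$. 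Your route offers more qualitative insight (it explains \emph{why} the decay is algebraic by exposing the non-hyperbolic equilibrium), while the paper's is shorter and directly quantitative; if you keep your version, you should close the gap above.
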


\begin{remark}
Although~\eqref{dy} is a Lotka-Volterra system of competition, the results of the above theorem show the  cooperation of the nonlinear diffusion and the logistic growth.
\end{remark}
\begin{remark}
The method of Theorem~\ref{t1} can be extended to deal with $0\leq P_0\leq P_H$ for $P_H\geq P_M$. We still consider $(\alpha(t)-\beta(t)|x|^2)_+$ (Theorem~\ref{t1}) as a subsolution, and regard the function $f(t)$ satisfying $\frac{d f}{dt}=(m-1)fG(f)$ with $f(0)=P_H$ as a super-solution, then $f(t)\to P_M$ as $t\to\infty$. More precisely, for any $R>0$,  it holds for some constant $C>0$ depending on $P_M,P_H, \alpha_0,\beta_0,m,d_G,R$ that $\|P(t)-P_M\|_{L^\infty(B_R)}\leq |f(t)-P_M|+\|P_M-(\alpha(t)-\beta(t)|x|^2)_+\|_{L^\infty(B_R)}\leq \max\{(1+t)^{-1}, |f(t)-P_M|\},\ t\geq0$. For example of the tumor growth $G(f)=P_M-f$, by solving the ODE, we have $f(t)-P_M=\frac{e^{-(m-1)P_Mt}}{\frac{P_H}{P_H-P_M}-e^{-(m-1)P_Mt}}$, $t\geq0$. Hence, for any $R>0$, one can get large time behavior like
\[\|P(t)-P_M\|_{L^\infty(B_R)}\leq C\max\{(1+t)^{-1}, |f(t)-P_M|\}\leq C(1+t)^{-1},\quad t\geq0,\]
where $C>0$ is a constant depending on $P_M,P_H, \alpha_0,\beta_0,m,d_G,R$.
\end{remark}
 
 Next, we will prove Theorem~\ref{t1}. We first analyze the time decay rates  of the strong solution to the initial value problem for this Lotka-Volterra system.  
 \begin{lemma}[Lotka-Volterra system]\label{ce}
 Let $(\alpha(t),\beta(t))$ be the strong solution to the initial value problem of the Lotka-Volterra system \eqref{dy}
 with the initial data $\alpha_0>0,\beta_0>0$ satisfying~\eqref{ini1}.
 Then, it holds
 \begin{equation}\label{lts}
 \begin{cases}
 \alpha_0<\alpha(t)<P_M,\quad 0<\beta(t)<\beta_0,&\\
 P_M-\frac{2n}{d_G}\beta(t)>\alpha(t)> P_M-\frac{2n(m-1)+4}{d_G(m-1)}\beta(t),\quad t>0,&\\
 \alpha(t)\to P_M,\quad \beta(t)\to0,\quad\text{as }t\to\infty.&
 \end{cases}
 \end{equation}
 Moreover, the optimal time decay rates are obtained as follows: 
\begin{equation*}
C^{-1}(t+1)^{-1}\leq P_M-\alpha(t), \beta(t)\leq C(t+1)^{-1}, \quad t>0,
\end{equation*}
for some constant $C>0$ depending on $P_M,\alpha_0,\beta_0,m,d_G$.
 \end{lemma}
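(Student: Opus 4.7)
The plan is to analyze \eqref{dy} as a planar Lotka-Volterra ODE: first I would establish forward invariance of the relevant region and convergence to $(P_M,0)$ by monotonicity, and then extract the sharp decay rates via a phase-plane analysis of the ratio $w(t):=(P_M-\alpha(t))/\beta(t)$. Throughout the argument I abbreviate $a:=\frac{2n}{d_G}$, $b:=\frac{2n(m-1)+4}{d_G(m-1)}$, $c:=(m-1)d_G$, and note the convenient identity $c(b-a)=4$.

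Forward invariance of the open region $\mathcal{R}:=\{(\alpha,\beta):\alpha,\beta>0,\ \alpha+a\beta<P_M<\alpha+b\beta\}$ would follow from a tangent-flow check at each boundary: on $\{\alpha+a\beta=P_M\}$ one has $\dot\alpha+a\dot\beta=ac(a-b)\beta^2<0$, and on $\{\alpha+b\beta=P_M\}$ one has $\dot\alpha+b\dot\beta=c\alpha(b-a)\beta>0$, so the flow points into $\mathcal{R}$ on both boundaries (with $\alpha\geq 0$ preserved separately because $\dot\alpha|_{\alpha=0}=0$). Inside $\mathcal{R}$, the strict signs of $F_1:=P_M-\alpha-a\beta>0$ and $F_2:=P_M-\alpha-b\beta<0$ (together with $F_1-F_2=(b-a)\beta$) give $\dot\alpha>0$ and $\dot\beta<0$, so $\alpha$ and $\beta$ are monotone and bounded. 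The limit $(\alpha_\infty,\beta_\infty)\in\overline{\mathcal R}$ must be an ODE equilibrium, but $\alpha_\infty\geq\alpha_0>0$ forces $F_1=0$, while $\beta_\infty>0$ would additionally require $F_2=0$ (impossible since $F_1>F_2$ whenever $\beta>0$); hence $\alpha_\infty=P_M$ and $\beta_\infty=0$.

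The lower bound $\beta(t)\geq C^{-1}/(1+t)$ is then immediate from $F_2\in(-(b-a)\beta,0)$, which yields $\dot\beta=c\beta F_2\geq-c(b-a)\beta^2=-4\beta^2$; integrating the resulting differential inequality $(1/\beta)'\leq 4$ gives $\beta(t)\geq\beta_0/(1+4\beta_0 t)$. The matching two-sided bounds on $P_M-\alpha$ follow at once from the sandwich $a\beta<P_M-\alpha<b\beta$ valid throughout $\mathcal R$. Thus everything reduces to the upper bound $\beta(t)\leq C/(1+t)$, which is the main obstacle.

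For this I would compute $\dot w=-cP_M(w-a)+c(b-a)w\beta$ and identify its zero locus $w=w_*(\beta):=aP_M/(P_M-(b-a)\beta)$, noting that $w_*(\beta)<b$ if and only if $\beta<P_M/b$. Because $\beta\downarrow 0$, I can fix $T\geq 0$ with $\beta(t)\leq P_M/(2b)$ for $t\geq T$, so $w_*(\beta(t))\leq w_\star:=w_*(P_M/(2b))<b$ on this tail; since $\dot w<0$ whenever $w>w_*(\beta)$, the barrier level $\bar w:=\max(w(T),w_\star)$ satisfies $\dot w<0$ at $w=\bar w$ and hence $w(t)\leq\bar w$ for $t\geq T$. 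Combined with $\sup_{[0,T]}w<b$ (continuity of $w$ on a compact interval with values in the open set $(a,b)$), this produces a uniform $c_0>0$ with $w(t)\leq b-c_0$; feeding this into $(1/\beta)'=c(b-w)\geq cc_0$ and integrating yields $\beta(t)\leq\beta_0/(1+cc_0\beta_0 t)\leq C/(1+t)$. The delicate point, and the step I expect to be the main obstacle, is precisely this dynamical argument preventing $w$ from accumulating at $\{w=b\}$ during the transient phase---everything else reduces to comparison principles, monotonicity, and the single identity $c(b-a)=4$.
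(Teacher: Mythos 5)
Your argument is correct, but it takes a genuinely different and noticeably less economical route than the paper, especially for the upper bound on $\beta$. The paper observes the single exact identity
\begin{equation*}
\frac{d}{dt}\Big(\frac{\alpha}{\beta}\Big)=\frac{(m-1)d_G\alpha}{\beta}\Big[\frac{2n(m-1)+4}{d_G(m-1)}-\frac{2n}{d_G}\Big]\beta=4\alpha,
\end{equation*}
i.e.\ the cross terms cancel exactly (this is the same cancellation $c(b-a)=4$ that you isolate, but applied to $\alpha/\beta$ rather than to $w=(P_M-\alpha)/\beta$). Since the invariant region already gives $\alpha_0\le\alpha\le P_M$, integrating once produces the two-sided bound $\alpha_0 t+\alpha_0/\beta_0\le\alpha/\beta\le P_M t+\alpha_0/\beta_0$, from which both the upper and lower $\Theta((1+t)^{-1})$ bounds for $\beta$ fall out simultaneously. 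Your derivation of the \emph{lower} bound from $\dot\beta\ge -4\beta^2$ is equally quick, but for the \emph{upper} bound you need to rule out $w$ accumulating at $b$ during the transient phase, which forces you into a barrier argument with $w_*(\beta)$, a compactness step on $[0,T]$, and a tail estimate. All of that effort is exactly what the choice of $\alpha/\beta$ avoids: because $\dot\alpha$ and $\dot\beta$ share the same factor $\alpha\beta(P_M-\alpha)$ up to the lower-order $\beta^2$ pieces, $\alpha/\beta$ has a sign-definite derivative bounded above and below by constants times $1$. On the other hand, for the $P_M-\alpha$ rate your sandwich $a\beta<P_M-\alpha<b\beta$ coming straight from the invariant region is arguably cleaner than the paper's route through the auxiliary logistic supersolutions/subsolutions $f_1=\alpha+\tfrac{2n}{d_G}\beta$ and $f_2=\alpha+\tfrac{2n(m-1)+4}{d_G(m-1)}\beta$, which the paper uses to capture the exponential correction term but which is not needed for the bare $(1+t)^{-1}$ statement.
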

 \begin{proof}
 By the standard phase-plane analysis of the Lotka-Volterra system, \eqref{lts} evidently holds.

 By direct computations, it holds for $\eqref{dy}_2$ that
 \begin{equation}\label{dy22}
 \begin{aligned}
 \dot{\beta}(t)&=(m-1)d_G\beta(t)(P_M-\alpha(t)-\frac{2n}{d_G}\beta(t))-(m-1)d_G\beta^2(t)\\
 &\leq(m-1)d_G\beta(t)(P_M-\alpha(t)-\frac{2n}{d_G}\beta(t)).
 \end{aligned}
 \end{equation}
 Let $f_1(t):=\alpha(t)+\frac{2n}{d_G}\beta(t)$ with $f_1(0)=f_{1,0}:=\alpha_0+\frac{2n}{d_G}\beta_0<P_M$, it holds after~$\eqref{dy}_1+\frac{2n}{d_G}\times\eqref{dy22}$ that
 \begin{equation}\label{f_1}
 \dot{f}_1(t)\leq (m-1)d_Gf_1(t)(P_M-f_1(t)).
 \end{equation}
 Then, we obtain
 \begin{equation*}
 \dot{f}_1(t)(\frac{1}{f_1(t)}+\frac{1}{P_M-f_1(t)})\leq P_Md_G(m-1).
 \end{equation*}
It follows from the derivative of compound function that
\begin{equation*}
\frac{d}{dt}\{\log\frac{f_1(t)}{P_M-f_1(t)}\}\leq P_Md_G(m-1).
\end{equation*}
We integrate the above equation on $[0,t]$ for any $t>0$ and get
\begin{equation*}
\log\frac{f_1(t)}{P_M-f_1(t)}\leq\log\frac{f_{1,0}}{P_M-f_{1,0}}+P_Md_G(m-1)t.
\end{equation*}
It holds by taking the inverse operator $\log^{-1}=e^{*}$ action on the above inequality  that
\begin{equation*}
\frac{f_1(t)}{P_M-f_1(t)}\leq \frac{f_{1,0}}{P_M-f_{1,0}}e^{d_GP_M(m-1)t}.
\end{equation*}
Hence, we obtain
\begin{equation}\label{lowcon}
\alpha(t)+\frac{2n}{d_G}\beta(t)=f_{1}(t)\leq \frac{P_Mf_{1,0}}{(P_M-f_{1,0})e^{-P_Md_G(m-1)t}+f_{1,0}}.
\end{equation}

Similarly, it follows from $\eqref{dy}_1$ that
\begin{equation}\label{dy11}
\begin{aligned}
\dot{\alpha}(t)&=d_G(m-1)\alpha(t)(P_M-\alpha(t)-\frac{2n(m-1)+4}{(m-1)d_G}\beta(t))+\frac{4}{d_G(m-1)}\beta^2(t)\\
&\geq d_G(m-1)\alpha(t)(P_M-\alpha(t)-\frac{2n(m-1)+4}{(m-1)d_G}\beta(t)).
\end{aligned}
\end{equation}
Define $f_2(t)=\alpha(t)+\frac{2n(m-1)+4}{d_G(m-1)}\beta(t)$ with $f_{2,0}=f_2(0):=\alpha_0+\frac{2n(m-1)+4}{d_G(m-1)}\beta_0>P_M$, then it holds after $\eqref{dy11}+\frac{2n(m-1)+4}{d_G(m-1)}\times \eqref{dy}_2$ that
\begin{equation*}
\dot{f}_2(t)\geq d_G(m-1)f_2(t)(P_M-f_2(t)).
\end{equation*}
Similar to \eqref{f_1}, we solve directly the above inequality equation  and get
\begin{equation}\label{upcon}
\alpha(t)+\frac{2n(m-1)+4}{d_G(m-1)}\beta(t)=f_2(t)\geq \frac{P_Mf_{2,0}}{(f_{2,0}-P_M)e^{-P_Md_G(m-1)t}+f_{2,0}}.
\end{equation}

Since
\begin{equation}\label{expand}
\begin{aligned}
&\frac{d}{dt}\big\{\frac{\alpha(t)}{\beta(t)}\big\}=\frac{\dot{\alpha}(t)}{\beta(t)}-\frac{\alpha(t)\dot{\beta}(t)}{\beta^2(t)}\\
=&\frac{(m-1)d_G\alpha(t)}{\beta(t)}[(P_M-\alpha(t)-\frac{2n}{d_G}\beta(t)\big)-(P_M-\alpha(t)-\frac{2n(m-1)+4}{d_G(m-1)}\beta(t))]\\
=&\frac{(m-1)d_G\alpha(t)}{\beta(t)}\frac{4\beta(t)}{d_G(m-1)}\\
=&4\alpha(t)>0\\
\end{aligned}
\end{equation}
and $\alpha_0\leq \alpha(t)\leq P_M$ hold, we integrate~\eqref{expand} on $[0,t]$ for any $t>0$ and obtain
\begin{equation*}
\alpha_0t+\frac{\alpha_0}{\beta_0}\leq \frac{\alpha(t)}{\beta(t)}\leq P_Mt+\frac{\alpha_0}{\beta_0},\quad t>0.
\end{equation*}
Hence, the time decay rate of $\beta(t)$ is obtained as
\begin{equation}\label{betacon}
 \frac{\alpha_0}{P_Mt+\frac{\alpha_0}{\beta_0}}\leq \beta(t)\leq \frac{P_M}{\alpha_0t+\frac{\alpha_0}{\beta_0}},\quad t>0.
 \end{equation}

Inserting~\eqref{betacon} into~\eqref{lowcon}, we get
\begin{equation}\label{conrat1}
\begin{aligned}
P_M-\alpha(t)\geq& P_M-\frac{P_Mf_{1,0}}{(P_M-f_{1,0})e^{-P_Md_G(m-1)t}+f_{1,0}}+\frac{2n}{d_G}\beta(t)\\
\geq&\frac{P_M(P_M-f_{1,0})e^{-P_Md_G(m-1)t}}{(P_M-f_{1,0})e^{-P_Md_G(m-1)t}+f_{1,0}}+\frac{2n}{d_G}\frac{\alpha_0}{P_Mt+\frac{\alpha_0}{\beta_0}}.
\end{aligned}
\end{equation}
Similarly, we take~\eqref{betacon} into~\eqref{upcon} and attain
\begin{equation}\label{conrat2}
\begin{aligned}
P_M-\alpha(t)&\leq P_M-\frac{P_Me^{-P_Md_G(m-1)t}}{(f_{2,0}-P_M)e^{-P_Md_G(m-1)t}+f_{2,0}}+\frac{2n(m-1)+4}{d_G(m-1)}\beta(t)\\
&\leq \frac{P_M(f_{2,0}-P_M)e^{-P_Md_G(m-1)t}}{(f_{2,0}-P_M)e^{-P_Md_G(m-1)t}+f_{2,0}}+\frac{2n(m-1)+4}{d_G(m-1)}\frac{P_M}{\alpha_0t+\frac{\alpha_0}{\beta_0}}.
\end{aligned}
\end{equation}
Combining\eqref{betacon}-\eqref{conrat2}, there exists a constant $C>0$ depending on $P_M,\alpha_0,\beta_0,m,d_G$ such that
\begin{equation*}
C^{-1}(t+1)^{-1}\leq  P_M-\alpha(t), \beta(t)\leq C(t+1)^{-1},\quad t>0,
\end{equation*}
and the proof is completed.
 \end{proof}
 \begin{lemma}[Sub-solution]\label{subs}
 Let $Q(x,t):=\big(\alpha(t)-\beta(t)|x|^2\big)_+$ with $\big(\alpha(t),\beta(t)\big)$ being the strong solution to the initial value problem~\eqref{dy}  for the Lotka-Volterra system from Lemma~\ref{ce}. Then, $Q(x,t)$ is a weak sub-solution to the Cauchy problem for the pressure equation~\eqref{pe} if $g(\rho)\& G(P)$ satisfies~\eqref{G}.
 \end{lemma}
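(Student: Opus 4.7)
The plan is to verify the classical pointwise subsolution inequality for the pressure equation \eqref{pe} inside the open set $\{Q>0\}$, where $Q$ is smooth, and then to extend this to the weak notion of Definition 1.1 by invoking the degeneracy of the equation at $\{Q=0\}$.

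On $\{Q>0\}$ one computes directly
\begin{equation*}
\partial_t Q = \dot\alpha(t) - \dot\beta(t)|x|^2,\quad \nabla Q = -2\beta(t) x,\quad |\nabla Q|^2 = 4\beta(t)^2|x|^2,\quad \Delta Q = -2n\beta(t).
\end{equation*}
Lemma \ref{ce} yields $0 < Q(x,t) < P_M$ on this set, so the assumption \eqref{G} (i.e.\ $-G'(P)\geq d_G$ on $[0,P_M]$) combined with $G(P_M)=0$ gives the linear lower bound $G(Q) \geq d_G(P_M-Q) = d_G(P_M - \alpha + \beta|x|^2)$. Multiplying by the positive factor $Q = \alpha - \beta|x|^2$ preserves the sense, so the target inequality $\partial_t Q \leq (m-1)Q(\Delta Q + G(Q)) + |\nabla Q|^2$ reduces, after substituting the ODE identities $\dot\alpha = (m-1)d_G\alpha(P_M-\alpha-\frac{2n}{d_G}\beta)$ and $\dot\beta = (m-1)d_G\beta(P_M-\alpha-\frac{2n(m-1)+4}{d_G(m-1)}\beta)$ on the left, to a polynomial comparison in $r:=|x|^2$ of degree at most two.

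The crux is the coefficient comparison. A direct expansion shows the $r^0$ parts cancel by the definition of $\dot\alpha$, and the remaining terms collapse to
\begin{equation*}
\text{(RHS)} - \text{(LHS)} = (m-1)d_G\alpha\beta\,|x|^2 - (m-1)d_G\beta^2|x|^4 = (m-1)d_G\beta|x|^2\bigl(\alpha - \beta|x|^2\bigr) = (m-1)d_G\beta|x|^2 Q \geq 0.
\end{equation*}
Thus the classical subsolution inequality holds pointwise on $\{Q>0\}$. The special competition constants $\frac{2n}{d_G}$ and $\frac{2n(m-1)+4}{d_G(m-1)}$ in \eqref{dy} are engineered precisely so that this cancellation closes; any other choice would leave either a negative constant residual (failing near $x=0$) or a positive $r^2$ residual (failing for large $|x|$).

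To conclude $Q$ is a weak subsolution in the sense of \eqref{weaksolution}, I would pass to the associated density $\bar\rho := \bigl(\frac{m-1}{m}Q\bigr)^{1/(m-1)}$, which is Lipschitz with compact spatial support for each $t$. Testing against any nonnegative $\varphi \in C_c^\infty(\mathbb{R}^n\times(0,T))$, the integration splits into $\{Q>0\}$, where the classical inequality transforms under the identity $\nabla\bar\rho^m = \bar\rho\nabla Q$ into the density subsolution inequality of \eqref{weaksolution} with the correct sign, and $\{Q\leq 0\}$, where $\bar\rho$ and $\bar\rho^m$ both vanish identically. Since every flux term in the density equation carries a factor of $\bar\rho$ that vanishes continuously across the smooth hypersurface $\{Q=0\}$, no concentrated boundary term arises. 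The main obstacle is the polynomial identity above: the tightness of the cancellation producing $(m-1)d_G\beta|x|^2 Q$ ties together the two equations of \eqref{dy} and the hypothesis \eqref{G} in an essentially unique way, and is the technical heart on which the construction of this self-similar-type subsolution rests.
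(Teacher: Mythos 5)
Your proof is correct and takes essentially the same approach as the paper: you verify the pointwise subsolution inequality on $\{Q>0\}$ using the lower bound $G(Q)\geq d_G(P_M-Q)$ from \eqref{G} and the ODE identities for $\dot\alpha$, $\dot\beta$, with the same cancellation of the $r^0$ and $r^1$ coefficients. The only cosmetic difference is bookkeeping: you carry the nonnegative residual $(m-1)d_G\beta|x|^2 Q$ to the end and observe its sign, whereas the paper discards the matching nonpositive term $-(m-1)d_G\beta|x|^2(\alpha-\beta|x|^2)$ earlier in the estimate of $-(m-1)QG(Q)$ and reduces the remainder to zero by the ODEs.
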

\begin{proof}
 For any given $P\in[0,P_M]$, thanks to the differential mean value theorem and the biological assumption~\eqref{G}, there exists $\xi\in[P,P_M]$ such that
 \begin{equation}
 G(P)=G(P)-G(P_M)=-G'(\xi)(P_M-P)\geq d_G(P_M-P).
 \end{equation}
By direct calculations in the positive parts of $Q$, we have following results:
\begin{equation*}
\partial_t Q(x,t)=\dot\alpha(t)-\dot\beta(t)|x|^2,
\end{equation*}
\begin{equation*}
\begin{aligned}
-(m-1)Q\Delta Q=&-(m-1)(\alpha(t)-\beta(t)|x|^2)(-2n\beta(t))\\
=&(m-1)2n\alpha(t)\beta(t)-(m-1)2n\beta^2(t)|x|^2,\\
\end{aligned}
\end{equation*}
\begin{equation*}
-|\nabla Q|^{2}=-|2\beta(t)x|^2=-4\beta^2(t)|x|^2,
\end{equation*}
and 
\begin{equation*}
\begin{aligned}
-(m-1)QG(Q)\leq& -(m-1)d_GQ(P_M-Q)\\
=&-(m-1)d_G(\alpha(t)-\beta(t)|x|^2)(P_M-\alpha(t)+\beta(t)|x|^2)\\
=&-(m-1)d_G\alpha(t)(P_M-\alpha(t))+(m-1)d_G\beta(t)(P_M-\alpha(t))|x|^2\\
&-(m-1)d_G\beta(t)|x|^2(\alpha(t)-\beta(t)|x|^2)\\
\leq&-(m-1)d_G\alpha(t)(P_M-\alpha(t))+(m-1)d_G\beta(t)(P_M-\alpha(t))|x|^2.
\end{aligned}
\end{equation*}
It concludes by combining the above results that
\begin{equation*}
\begin{aligned}
&\partial_tQ-(m-1)Q\Delta Q-|\nabla Q|^2-(m-1)QG(Q)\\
\leq& \big[\dot{\alpha}(t)-(m-1)d_G\alpha(t)(P_M-\alpha(t)-\frac{2n}{d_G}\beta(t))\big]\\
&-|x|^2\big[\dot{\beta}(t)-(m-1)d_G\beta(t)(P_M-\alpha(t)-\frac{2n(m-1)+4}{d_G(m-1)}\beta(t))\big]\\
=&0-0|x|^2\\
=&0\quad\text{in }\{(x,t)\in\mathbb{R}^n\times \mathbb{R}_+:\ Q(x,t)>0\},
\end{aligned}
\end{equation*}
and the proof is completed.
\end{proof}
\begin{proposition}[Large time asymptotic]\label{lta}Let $\alpha_0>0,\beta_0>0$ satisfy the assumptions~\eqref{ini1} and $P$ be the weak solution to the pressure equation~\eqref{pe} as $g(\rho)\& G(P)$ satisfies~\eqref{G}, and assume that the initial pressure $P_0$ satisfies $(\alpha_0-\beta_0|x|^2)_+\leq P_0(x)\leq P_M$ for $x\in\mathbb{R}^n$. Then, it holds 
\begin{equation*}
Q(x,t)\leq P(x,t)\leq P_M\quad\text{for } (x,t)\in \mathbb{R}^n\times \mathbb{R}_+,
\end{equation*}
where $Q(x,t):=(\alpha(t)-\beta(t)|x|^2)_+$ is from Lemma~\ref{subs}. Furthermore, $P$ locally converges to $P_M$ as the time goes to infinity. To be more precise,  there exists a constant $C>0$ depending on  $P_M,\alpha_0,\beta_0,m,d_G, R$ such that
\begin{equation*}
0\leq P_M-P(x,t)\leq C(1+t)^{-1},\quad (x,t)\in B_R(0)\times \mathbb{R}_+.
\end{equation*}
\end{proposition}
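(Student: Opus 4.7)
The plan is to combine the comparison principle with the sub/super-solution construction already in hand. The constant function $P_M$ is a classical (and therefore weak) super-solution of the pressure equation~\eqref{pe}, since $\nabla P_M=0$, $\partial_t P_M=0$ and $G(P_M)=0$ make both sides vanish. Lemma~\ref{subs} furnishes a weak sub-solution in $Q(x,t)=(\alpha(t)-\beta(t)|x|^2)_+$, and the assumed initial sandwich $Q(x,0)\leq P_0(x)\leq P_M$ holds by hypothesis.

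First I would invoke the comparison principle for weak sub/super-solutions to~\eqref{de} (equivalently, to its pressure form~\eqref{pe}) which is available from the references cited in the introduction (see~\cite{PERTHAME2014,GM2018}). A subtlety is that $Q$ is only Lipschitz and vanishes on the set $\{\alpha(t)\leq\beta(t)|x|^2\}$, so the comparison is cleanest when transferred to the density variable $\rho=((m-1)P/m)^{1/(m-1)}$ against $\rho_M$ and against the density corresponding to $Q$; the porous medium type reaction-diffusion theory applies directly in that formulation. This yields the sandwich
\begin{equation*}
Q(x,t)\leq P(x,t)\leq P_M,\qquad (x,t)\in\mathbb{R}^n\times\mathbb{R}_+.
\end{equation*}

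To extract the local decay rate on a fixed ball $B_R(0)$, I would combine the sandwich with the explicit rates from Lemma~\ref{ce}. Since $\beta(t)\to 0$ and $\alpha(t)\to P_M$, there exists $T_R>0$ (depending on $P_M,\alpha_0,\beta_0,m,d_G,R$) such that $\alpha(t)>\beta(t)R^2$ for all $t\geq T_R$, so $Q(x,t)=\alpha(t)-\beta(t)|x|^2$ on $B_R(0)$. For $t\geq T_R$ and $|x|\leq R$,
\begin{equation*}
0\leq P_M-P(x,t)\leq P_M-Q(x,t)=\bigl(P_M-\alpha(t)\bigr)+\beta(t)|x|^2\leq \bigl(P_M-\alpha(t)\bigr)+\beta(t)R^2\leq C(1+t)^{-1},
\end{equation*}
where the last inequality uses the decay bounds $P_M-\alpha(t)\leq C(1+t)^{-1}$ and $\beta(t)\leq C(1+t)^{-1}$ from Lemma~\ref{ce}. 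On the short-time window $0\leq t\leq T_R$ the crude estimate $P_M-P\leq P_M\leq P_M(1+T_R)(1+t)^{-1}$ closes the gap after enlarging~$C$.

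The main obstacle I foresee is the rigorous application of the comparison principle at the degenerate and Lipschitz-only level of~\eqref{pe}: $Q$ is not a classical solution, and the pressure equation degenerates where $Q=0$. The cleanest resolution is to perform the comparison in the density formulation~\eqref{de}, for which weak sub-/super-solution comparison is a standard consequence of the monotonicity of the nonlinearity $s\mapsto s^m$ and of the Lipschitz dependence of $\rho g(\rho)$ on $\rho$. Once this comparison step is granted, the remaining estimates reduce to the algebraic manipulations above and to the asymptotics already proved in Lemma~\ref{ce}.
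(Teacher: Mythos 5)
Your proof follows the same route as the paper: $Q$ from Lemma~\ref{subs} serves as the sub-solution, the constant $P_M$ as the super-solution, the comparison principle gives the sandwich $Q\leq P\leq P_M$, and the decay rates of Lemma~\ref{ce} are converted into the $(1+t)^{-1}$ bound on $B_R$ once a time $T_R$ is chosen so that $B_R$ lies in the support of $Q(\cdot,t)$ for $t\geq T_R$. Your explicit short-time patch on $[0,T_R]$ and your remark about carrying out the comparison in the density variable are sensible refinements that the paper leaves implicit, but they do not alter the structure of the argument.
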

\begin{proof}
Due to Lemma~\ref{subs}, we conclude by the comparison principle that
\begin{equation}\label{ineq1}
Q(x,t)\leq P(x,t)\leq P_M,\quad (x,t)\in \mathbb{R}^n\times \mathbb{R}_+.
\end{equation}
From~\eqref{expand},  we have
\begin{equation} 
\begin{aligned}
\frac{d}{dt}\big\{\frac{\alpha(t)}{\beta(t)}\big\}
=4\alpha(t)>0,
\end{aligned}
\end{equation}
which means that the diameter of the support of $Q(x,t)$, $diam(\{x\in\mathbb{R}^n:Q(x,t)>0\})=\sqrt{\frac{\alpha(t)}{\beta(t)}}$,  expands strictly as the time increases, where $diam(A)$ represents the diameter of the set $A$.

Furthermore, for any given $R>\sqrt{\frac{\alpha_0}{\beta_0}}$, there exists $T>0$ such that $\sqrt{\frac{\alpha(T)}{\beta(T)}}=R$. Then, we have
\begin{equation}\label{decay}
Q(x,t)=(\alpha(t)-\beta(t)|x|^2)_+=\alpha(t)-\beta(t)|x|^2>0,\quad\text{for any }|x|<R\text{ and }t>T.
\end{equation}
On account of \eqref{lts}, we get
\begin{equation*}
Q(x,t)\to P_M\text{ uniformly in }B_R(0),\quad\text{as }t\to\infty.
\end{equation*}
Therefore, it holds by~\eqref{ineq1} that
\begin{equation*}
P(x,t)\to P_M\text{ uniformly in }B_R(0),\quad\text{as }t\to\infty.
\end{equation*}

In addition, it follows from~\eqref{decay} that
\begin{equation*}
0\leq P_M-P(x,t)\leq P_M-\alpha(t)+\beta(t)R^2\quad\text{for }|x|\leq R,\ t>T.
\end{equation*}
Hence, thanks to Lemma~\ref{ce}, there exists a constant $C>0$ depending on  $P_M,\alpha_0,\beta_0,m,d_G, R$ such that 
\begin{equation*}
0\leq P_M-P(x,t)\leq C(t+1)^{-1}, \quad (x,t)\in B_R(0)\times \mathbb{R}_+,
\end{equation*}
and the proof is completed.
\end{proof}

\begin{proof}[\underline{\textbf{Proof of Theorem~\ref{t1}}}] Combining  Lemmas~\ref{ce}-\ref{subs} and Proposition~\ref{lta}, then  the proof of Theorem~\ref{t1} is completed.
\end{proof}

\paragraph{Support expansion rate.}We show the time support expansion rate of $\rho$ as the time goes to infinity. When the initial pressure $P_0$ is strictly less than $P_M$ and is compactly supported, and $g(\rho)$ satisfies~\eqref{sup}, the following theorem is to estimate the support $\{x: P(x,t)>0\}$. To this end, we need the additional upper bound hypothesis of the derivative of $-G$
\begin{equation}\label{sup}
D_G:=\sup\limits_{\rho\in[0,\rho_M]}-g'(\rho)\frac{\rho^{2-m}}{m-1}=\sup\limits_{P\in[0,P_M]}-G'(P)>0,
\end{equation}
which holds for $G(P)=P_M-P$ from the tumor growth model~\cite{PERTHAME2014} and $g(\rho):=1-\rho$ from the Fisher-KPP equation with $1<m\leq2$~\cite{DQZ2020}.  For this purpose, we introduce the initial value problem of the another Lotka-Volterra system~\cite[Section 12]{KM2001B} as follows
\begin{equation}\label{lowdy}
 \begin{cases}
\dot{\lambda}(t)=D_G(m-1)\lambda(t)(P_M-\lambda(t)-\frac{2n}{D_G}\kappa(t)),\quad t>0,&\\
\dot{\kappa}(t)=D_G(m-1)\kappa(t)(P_M-2\lambda(t)-\frac{2n(m-1)+4}{D_G(m-1)}\kappa(t)),&\\
\lambda(0):=\lambda_0>0,\quad \kappa(0):=\kappa_0>0.&\\
 \end{cases}
 \end{equation}

\begin{theorem}[Support expansion rate]\label{t4}
Suppose that $(\lambda(t),\kappa(t))$ is a strong solution to the initial value problem~\eqref{lowdy} for the Lotka-Volterra  system
 with the initial data $\lambda_0>0,\kappa_0>0$ satisfying
 \begin{equation}\label{lowini}
 P_M-\lambda_0-\frac{2n}{D_G}\kappa_0>0>P_M-2\lambda_0-\frac{2n(m-1)+4}{D_G(m-1)}\kappa_0.
 \end{equation}
 Then, there exists a constant $C>0$ depending on $P_M,D_G,m,\lambda_0,\kappa_0$ such that it holds
  \begin{equation*}
C^{-1}e^{-[D_G(m-1)P_M+4\kappa_0]t}\leq P_M-\lambda(t),\ \kappa(t)\leq Ce^{-D_G(m-1)\lambda_0t},\quad t>0.
 \end{equation*}
In addition, $(\lambda(t)-\kappa(t)|x|^2)_+$ is a weak sup-solution to the Cauchy problem for the pressure equation~\eqref{pe}  when $g$ satisfies~\eqref{sup}. Under the initial assumption $P_0(x)\leq (\lambda_0-\kappa_0|x|^2)_+<P_M$, $x\in\mathbb{R}^n$, it holds
\begin{equation*}
0\leq P(x,t)\leq (\lambda(t)-\kappa(t)|x|^2)_+<P_M,\quad(x,t)\in\mathbb{R}^n\times\mathbb{R}_+,
\end{equation*}
and the support of $P(\cdot,t)$ satisfies
\begin{equation*}
supp(P(\cdot,t))\subset B_{R(t)}(0)\quad\text{with }R(t):=\sqrt{\frac{\lambda_0}{\kappa_0}}e^{\frac{D_GP_M(m-1)+4\kappa_0}{2}t},\quad t>0.
\end{equation*}
 Furthermore, there exists a constant $C>0$ depending on $P_M,D_G,m,\lambda_0,\kappa_0$ such that
 \begin{equation*}
P_M-P(x,t)\geq C^{-1}e^{-[P_MD_G(m-1)+4\kappa_0]t},\quad(x,t)\in\mathbb{R}^n\times \mathbb{R}_+.
\end{equation*}

\end{theorem}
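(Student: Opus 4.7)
The argument mirrors that of Theorem~\ref{t1} and Proposition~\ref{lta}, with the roles of sub- and super-solution interchanged. The plan has four steps: (i) phase-plane analysis of~\eqref{lowdy} to produce two-sided exponential rates for $(\lambda(t),\kappa(t))\to(P_M,0)$; (ii) verification that $Q(x,t):=(\lambda(t)-\kappa(t)|x|^2)_+$ is a weak sup-solution of the pressure equation~\eqref{pe} under hypothesis~\eqref{sup}; (iii) the comparison principle to obtain $P\leq Q$; (iv) conversion of the ODE rates into the explicit support bound and the lower bound on $P_M-P$.

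\textbf{ODE analysis.} Condition~\eqref{lowini} places $(\lambda_0,\kappa_0)$ in the open region
\begin{equation*}
\mathcal{R}:=\Big\{\lambda+\tfrac{2n}{D_G}\kappa<P_M\Big\}\cap\Big\{2\lambda+\tfrac{2n(m-1)+4}{D_G(m-1)}\kappa>P_M\Big\}.
\end{equation*}
A direct check on the two nullclines---on the first, $\dot\lambda=0$ and $\dot\kappa<0$; on the second, $\dot\kappa=0$ and $\dot\lambda>0$---shows that $\mathcal{R}$ is forward invariant. Inside $\mathcal{R}$, $\lambda$ is strictly increasing and $\kappa$ strictly decreasing, so monotonicity plus boundedness force convergence to the unique equilibrium with positive first coordinate, namely $(P_M,0)$. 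For the quantitative rates I would repeat the ratio trick of~\eqref{expand}: setting $u(t):=\lambda(t)/\kappa(t)$, a direct computation yields $\dot u=D_G(m-1)\lambda\,u+4\lambda$. Bracketing $\lambda_0\leq\lambda\leq P_M$ together with $\lambda=u\kappa\leq u\kappa_0$ sandwiches $\dot u$ between two linear ODEs, and Gronwall produces $u(0)\,e^{D_G(m-1)\lambda_0 t}\leq u(t)\leq u(0)\,e^{[D_G(m-1)P_M+4\kappa_0]t}$. Since $\kappa=\lambda/u$, this gives the two-sided exponential bounds on $\kappa$; the upper bound on $P_M-\lambda$ follows from a Duhamel estimate for $v:=P_M-\lambda$ (which satisfies $\dot v+D_G(m-1)\lambda_0 v\leq 2n(m-1)P_M\kappa$), and the lower bound from the $\mathcal{R}$-inequality $P_M-\lambda\geq\tfrac{2n}{D_G}\kappa$.

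\textbf{Super-solution property.} In $\{Q>0\}$ I would reproduce the computation of Lemma~\ref{subs}, with a single sign reversal: by the mean value theorem and~\eqref{sup}, $G(Q)\leq D_G(P_M-Q)$, hence $-(m-1)QG(Q)\geq-(m-1)D_GQ(P_M-Q)$. Expanding $(\lambda-\kappa|x|^2)(P_M-\lambda+\kappa|x|^2)$ and grouping by powers of $|x|^2$, the $|x|^0$ coefficient cancels thanks to $\eqref{lowdy}_1$, the $|x|^2$ coefficient (which also absorbs both $-(m-1)Q\Delta Q$ and $-|\nabla Q|^2$) cancels thanks to $\eqref{lowdy}_2$, and the residual $|x|^4$ coefficient equals $(m-1)D_G\kappa^2\geq 0$. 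Hence $\partial_tQ-(m-1)Q\Delta Q-|\nabla Q|^2-(m-1)QG(Q)\geq 0$, so $Q$ is a weak sup-solution.

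\textbf{Conclusion and main obstacle.} The comparison principle, together with the initial ordering $P_0\leq(\lambda_0-\kappa_0|x|^2)_+$, yields $P\leq Q$ pointwise. Since $\{Q(\cdot,t)>0\}=\{|x|^2<\lambda(t)/\kappa(t)\}$, the upper bound on $u$ from step~(i) identifies this ball with $B_{R(t)}(0)$ and establishes the support inclusion. The estimate $P_M-P\geq P_M-\lambda\geq\tfrac{2n}{D_G}\kappa\geq C^{-1}e^{-[D_G(m-1)P_M+4\kappa_0]t}$ then follows directly from step~(i). The main technical obstacle lies in the ODE step: the factor $2$ in front of $\lambda$ in $\eqref{lowdy}_2$ breaks the symmetry used in Lemma~\ref{ce} to reduce to scalar logistic ODEs for $f_1,f_2$, forcing one to work directly with the ratio $u=\lambda/\kappa$; the inhomogeneous source $4\lambda$ in the equation for $\dot u$ is precisely what produces the correction $+4\kappa_0$ appearing in the exponential rate of the final statement.
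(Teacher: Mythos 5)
Your overall architecture matches the paper's proof (Lemmas~\ref{lowce}--\ref{lows}): phase-plane analysis, two-sided rates via the ratio $u=\lambda/\kappa$, the sup-solution computation with the sign of the mean-value bound on $G$ flipped, and the comparison principle. The ratio ODE $\dot u=D_G(m-1)\lambda u+4\lambda$ and the resulting sandwich for $\kappa$ are correct and essentially identical to the paper's $\frac{d}{dt}\log(\lambda/\kappa)=D_G(m-1)\lambda+4\kappa$ computation in~\eqref{expand1}--\eqref{rate}. The lower bound $P_M-\lambda>\tfrac{2n}{D_G}\kappa$ from the forward-invariant region is also exactly what the paper uses.

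There is, however, a genuine gap in the upper bound for $P_M-\lambda$. You propose the Duhamel estimate
\begin{equation*}
v(t)\le v(0)e^{-D_G(m-1)\lambda_0 t}+2n(m-1)P_M\int_0^t e^{-D_G(m-1)\lambda_0(t-s)}\kappa(s)\,ds,
\end{equation*}
with $v:=P_M-\lambda$. But the only upper bound on $\kappa$ you have produced at this point is $\kappa(s)\le \tfrac{P_M\kappa_0}{\lambda_0}e^{-D_G(m-1)\lambda_0 s}$, whose decay rate coincides exactly with the rate of the integrating factor. The convolution therefore gives $v(t)\le C(1+t)e^{-D_G(m-1)\lambda_0 t}$, which is strictly weaker than the theorem's clean bound $v(t)\le Ce^{-D_G(m-1)\lambda_0 t}$ and cannot be absorbed into the constant. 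The repair is short: since $\lambda$ is strictly increasing (from the phase-plane step), fix any $T>0$ and set $\bar\lambda:=\lambda(T)>\lambda_0$; for $t\ge T$ the same Duhamel argument with coefficient $D_G(m-1)\bar\lambda$ produces a strictly faster decaying kernel, so $\int_T^t e^{-D_G(m-1)\bar\lambda(t-s)}e^{-D_G(m-1)\lambda_0 s}\,ds\le \tfrac{1}{D_G(m-1)(\bar\lambda-\lambda_0)}e^{-D_G(m-1)\lambda_0 t}$, and the bound on $[0,T]$ is trivial by boundedness. Without this (or an equivalent) extra observation the Duhamel step as written does not close.

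It is also worth flagging that your Duhamel route, once patched, is actually more robust than the paper's own computation for this step. Lemma~\ref{lowce} tries to imitate Lemma~\ref{ce} by introducing $g_2=\lambda+\tfrac{2n(m-1)+4}{D_G(m-1)}\kappa$ and asserting $g_{2,0}>P_M$ together with $\dot g_2\ge D_G(m-1)g_2(P_M-g_2)$; but the second condition in~\eqref{lowini} only guarantees $2\lambda_0+\tfrac{2n(m-1)+4}{D_G(m-1)}\kappa_0>P_M$, not $g_{2,0}>P_M$, and a direct expansion gives $\dot g_2=D_G(m-1)g_2(P_M-g_2)-2n(m-1)\lambda\kappa$, which has the opposite sign from what is claimed. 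Your observation that the factor $2$ in $\eqref{lowdy}_2$ destroys the logistic reduction that was available in Lemma~\ref{ce} is exactly the correct diagnosis; working directly with the ratio and a (properly split) Duhamel estimate is the cleaner way around it.
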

The left of this section is devoted to proving Theorem~\ref{t4}. We first obtain the time convergence rates of Lotka-Volterra system~\eqref{lowdy}.
 \begin{lemma}\label{lowce}
 Let $(\lambda(t),\kappa(t))$ be a solution to the initial value problem for the Lotka-Volterra system \eqref{lowdy}
 under the initial assumption~\eqref{lowini}, then it holds
 \begin{equation}\label{llts}
 \begin{cases}
 \lambda_0<\lambda(t)<P_M,\quad 0<\kappa(t)<\kappa_0,\quad t>0,&\\
 P_M-\lambda(t)-\frac{2n}{D_G}\kappa(t)>0>P_M-2\lambda(t)-\frac{2n(m-1)+4}{D_G(m-1)}\kappa(t),&\\
 \lambda(t)\to P_M,\quad \kappa(t)\to0,\quad\text{as }t\to\infty.&
 \end{cases}
 \end{equation}
 Furthermore, there exists a constant $C>0$ depending on $P_M,D_G,m,\lambda_0,\kappa_0$ such that it holds
  \begin{equation}\label{kapparate}
C^{-1}e^{-[D_G(m-1)P_M+4\kappa_0]t}\leq P_M-\lambda(t),\  \kappa(t)\leq Ce^{-D_G(m-1)\lambda_0t},\quad t>0.
 \end{equation}

 \end{lemma}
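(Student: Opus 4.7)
The plan is to follow the pattern of Lemma~\ref{ce}, with one extra bootstrap step to handle the different linearization of~\eqref{lowdy}. Setting $h_1 := P_M - \lambda - \frac{2n}{D_G}\kappa$ and $h_2 := P_M - 2\lambda - \frac{2n(m-1)+4}{D_G(m-1)}\kappa$, the system reads $\dot\lambda = D_G(m-1)\lambda h_1$ and $\dot\kappa = D_G(m-1)\kappa h_2$, and hypothesis~\eqref{lowini} becomes $h_1(0) > 0 > h_2(0)$. First I would show that the region $\{h_1 > 0,\ h_2 < 0\}$ is positively invariant: on $\{h_1 = 0,\ h_2 < 0\}$ one has $\dot\lambda = 0$ and $\dot\kappa < 0$, so $\dot h_1 = -\tfrac{2n}{D_G}\dot\kappa > 0$, and symmetrically $\dot h_2 = -2\dot\lambda < 0$ on $\{h_2 = 0,\ h_1 > 0\}$. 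This standard phase-plane argument yields the first two lines of~\eqref{llts} together with strict monotonicity $\dot\lambda > 0$, $\dot\kappa < 0$; the monotone bounded trajectories $\lambda(t),\kappa(t)$ then converge to an equilibrium of~\eqref{lowdy}, and the only one compatible with $\lambda_\infty \geq \lambda_0 > 0$, $\kappa_\infty \geq 0$, $h_1(\infty)\geq 0$, $h_2(\infty)\leq 0$ is $(P_M,0)$.

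The quantitative rates come from the clean identity
\[\frac{d}{dt}\ln\frac{\lambda}{\kappa} = D_G(m-1)(h_1-h_2) = D_G(m-1)\lambda + 4\kappa,\]
whose right-hand side is squeezed between $D_G(m-1)\lambda_0$ and $D_G(m-1)P_M + 4\kappa_0$. Integration gives
\[\frac{\lambda_0}{\kappa_0}e^{D_G(m-1)\lambda_0 t} \leq \frac{\lambda(t)}{\kappa(t)} \leq \frac{\lambda_0}{\kappa_0}e^{[D_G(m-1)P_M+4\kappa_0]t},\]
and combining with $\lambda_0 < \lambda(t) < P_M$ yields both $\kappa$-bounds in~\eqref{kapparate}. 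The lower bound for $P_M-\lambda$ follows immediately from $h_1 > 0$: $P_M - \lambda > \tfrac{2n}{D_G}\kappa \geq \tfrac{2n\kappa_0}{D_G} e^{-[D_G(m-1)P_M+4\kappa_0]t}$.

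The delicate part is the upper bound on $\mu := P_M - \lambda$. Writing the $\lambda$-equation as $\dot\mu + D_G(m-1)\lambda\mu = 2n(m-1)\lambda\kappa$ and bounding $\lambda$ by its extremes gives $\dot\mu + D_G(m-1)\lambda_0\mu \leq 2n(m-1)P_M\kappa$, but inserting $\kappa(t) \leq Ce^{-D_G(m-1)\lambda_0 t}$ directly into variation of parameters causes a resonance and yields only $\mu(t) \lesssim (1+t)e^{-D_G(m-1)\lambda_0 t}$, which is not of the stated form. This resonance reflects the repeated eigenvalue $-D_G(m-1)P_M$ of the linearization of~\eqref{lowdy} at $(P_M,0)$. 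My plan is to bootstrap the $\kappa$-bound: fix any $\lambda_* \in (\lambda_0, P_M)$, use the monotone convergence $\lambda(t)\to P_M$ to pick $T_* > 0$ with $\lambda(t) \geq \lambda_*$ for all $t \geq T_*$, and rerun the log-ratio argument on $[T_*, t]$ to obtain $\kappa(t) \leq \widetilde C\, e^{-D_G(m-1)\lambda_* t}$ valid for all $t \geq 0$. Since $\lambda_* > \lambda_0$, the convolution $\int_0^t e^{-D_G(m-1)\lambda_0(t-s)}\kappa(s)\,ds$ is now genuinely bounded by a constant multiple of $e^{-D_G(m-1)\lambda_0 t}$, and variation of parameters finally delivers $\mu(t) \leq Ce^{-D_G(m-1)\lambda_0 t}$. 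This bootstrap, which trades the crude lower bound $\lambda \geq \lambda_0$ for the stronger fact that $\lambda$ eventually enters every neighbourhood of $P_M$, is the main obstacle of the proof.
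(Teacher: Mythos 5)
Your proof is correct and takes a genuinely different route to the upper bound $P_M-\lambda(t)\leq Ce^{-D_G(m-1)\lambda_0 t}$. The phase-plane step, the log-ratio identity, and the lower bound via $P_M-\lambda>\frac{2n}{D_G}\kappa$ coincide with the paper's; the divergence is in the upper bound. The paper introduces $g_2:=\lambda+\frac{2n(m-1)+4}{D_G(m-1)}\kappa$ and asserts $\dot g_2\geq D_G(m-1)g_2(P_M-g_2)$, from which it derives a lower logistic bound on $g_2$ decaying at rate $D_G(m-1)P_M$. A direct computation gives instead $\dot g_2=D_G(m-1)\bigl[g_2(P_M-g_2)-\frac{2n}{D_G}\lambda\kappa\bigr]$, so the differential inequality actually goes the opposite way: the correction term has the opposite sign to the analogous step in Lemma~\ref{ce}, because the second equation of~\eqref{lowdy} contains $-2\lambda$ rather than $-\lambda$. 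Consequently the paper's claimed lower bound on $g_2$ is not justified as written; indeed $\mu/\kappa\to\infty$ with $\mu:=P_M-\lambda$, so $g_2-P_M=\frac{2n(m-1)+4}{D_G(m-1)}\kappa-\mu$ eventually turns negative of order $te^{-D_G(m-1)P_Mt}$, which exceeds the claimed exponential for large $t$. Your argument---reducing to the forced linear ODE $\dot\mu+D_G(m-1)\lambda\mu=2n(m-1)\lambda\kappa$, correctly identifying the resonance at rate $D_G(m-1)\lambda_0$, and bootstrapping the $\kappa$-decay rate to $D_G(m-1)\lambda_*$ with $\lambda_*\in(\lambda_0,P_M)$---sidesteps this entirely and gives a valid proof of the stated bound. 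The bootstrap is the key extra idea and is sound; although $T_*$ is obtained non-quantitatively from $\lambda(t)\to P_M$, both $T_*$ and the resulting constant $C$ are determined by the data $(P_M,D_G,m,\lambda_0,\kappa_0)$, so the lemma's stated dependence is preserved.
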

 \begin{proof}
 By the standard phase-plane analysis of the Lotka-Volterra system, \eqref{llts} evidently holds.

Using~\eqref{lowdy}, we have
 \begin{equation}\label{expand1}
 \begin{aligned}
\frac{d}{dt}\{\frac{\lambda(t)}{\kappa(t)}\}&=\frac{\dot{\lambda}(t)\kappa(t)-\dot{\kappa}(t)\lambda(t)}{\kappa^2(t)}
=\frac{\lambda(t)}{\kappa(t)}[D_G(m-1)\lambda(t)+4\kappa(t)].
 \end{aligned}
 \end{equation}
 On account of~$\eqref{llts}_1$, we integrate~\eqref{expand1} on $[0,t]$ for any $t>0$ and obtain
 \begin{equation}\label{rate}
\frac{\lambda_0}{\kappa_0}e^{D_G(m-1)\lambda_0t}\leq \frac{\lambda(t)}{\kappa(t)}\leq \frac{\lambda_0}{\kappa_0}e^{[D_G(m-1)P_M+4\kappa_0]t}.
 \end{equation}
Naturally, the time exponential decay rate of $\kappa(t)$ is obtained as
 \begin{equation*}
 \kappa_0e^{-[D_G(m-1)P_M+4\kappa_0]t}\leq \kappa(t)\leq \frac{\kappa_0P_M}{\lambda_0}e^{-D_G(m-1)\lambda_0t}\quad\text{for }0\leq t<\infty,
 \end{equation*}
 so $\eqref{kapparate}_2$ holds.

 Let us define $g_1(t)=\lambda(t)+\frac{2n}{D_G}\kappa(t)$ with the initial data $g_{1,0}=g_1(0)=\lambda_0+\frac{2n}{D_G}\kappa_0<P_M$. It concludes by~$\eqref{lowdy}_1+\eqref{lowdy}_2\times \frac{2n}{D_G}$ that
 \begin{equation*}
 \dot{g}_1(t)\leq D_G(m-1)g_1(P_M-g_1).
 \end{equation*}
By directly solving the above inequality equation similar to~\eqref{f_1}, we have
 \begin{equation}\label{g1}
 g_1(t)\leq \frac{P_Mg_{1,0}}{(P_M-g_{1,0})e^{-D_GP_M(m-1)t}+g_{1,0}}.
 \end{equation}
 Similarly, we set $g_2(t)=\lambda(t)+\frac{2n(m-1)+4}{D_G(m-1)}\kappa(t)$ with the initial data $g_{2,0}=g_2(0)=\lambda_0+\frac{2n(m-1)+4}{D_G(m-1)}\kappa_0>P_M$. Then, it follows from~$\eqref{lowdy}_1+\eqref{lowdy}_2\times \frac{2n(m-1)+4}{D_G(m-1)}$ that
 \begin{equation*}
 \dot{g}_2(t)\geq D_G(m-1)g_2(P_M-g_2).
 \end{equation*}
 It holds by the similar computations with~\eqref{f_1} that
 \begin{equation}\label{g2}
 g_2(t)\geq \frac{P_Mg_{2,0}}{(g_{2,0}-P_M)e^{-D_GP_M(m-1)t}+g_{2,0}}.
 \end{equation}
 Therefore, it holds by combining \eqref{g1} and \eqref{g2} that
 \begin{equation*}
\begin{aligned}
P_M-\lambda(t)&=P_M-g_1(t)+\frac{2n}{D_G}\kappa(t)\\
&\geq\frac{P_M(P_M-g_{1,0})e^{-D_GP_M(m-1)t}}{(P_M-g_{1,0})e^{-D_GP_M(m-1)t}+g_{1,0}}+\frac{2n}{D_G}\kappa(t),\\
P_M-\lambda(t)&=P_M-g_2(t)+\frac{2n(m-1)+4}{D_G(m-1)}\kappa(t)\\
&\leq\frac{P_M(g_{2,0}-P_M)e^{-D_GP_M(m-1)t}}{(g_{2,0}-P_M)e^{-D_GP_M(m-1)t}+g_{2,0}}+\frac{2n(m-1)+4}{D_G(m-1)}\kappa(t),
\end{aligned}
\end{equation*}
so $\eqref{kapparate}_1$ holds. The proof is completed.
 \end{proof}

\begin{lemma}\label{lows}
Let $(\lambda(t),\kappa(t))$ be the strong solution to the Lotka-Volterra system~\eqref{lowdy} under the initial assumption~\eqref{lowini}. Then, $(\lambda(t)-\kappa(t)|x|^2)_+$ is a weak sup-solution to the Cauchy problem for the pressure equation~\eqref{pe} as  $g(\rho)\& G(P)$ satisfies~\eqref{sup}. Furthermore, under the initial assumption $P_0(x)\leq (\lambda_0-\kappa_0|x|^2)_+<P_M$ for $x\in\mathbb{R}^n$, it holds for the pressure $P$ that
\begin{equation}\label{comsup}
0\leq P(x,t)\leq (\lambda(t)-\kappa(t)|x|^2)_+<P_M\quad\text{for }(x,t)\in\mathbb{R}^n\times\mathbb{R}_+,
\end{equation}
and the support of $P(\cdot,t)$ can be estimated as
\begin{equation*}
supp(P(\cdot,t))\subset B_{R(t)}(0)\quad\text{with }R(t):=\sqrt{\frac{\lambda_0}{\kappa_0}}e^{\frac{D_GP_M(m-1)+4\kappa_0}{2}t}.
\end{equation*}
 Furthermore, there exists a constant $C>0$ depending on $P_M,D_G,m,\lambda_0,\kappa_0$ such that
 \begin{equation*}
P_M-P(x,t)\geq P_M-(\lambda(t)-\kappa(t)|x|^2)_+\geq C^{-1}e^{-[P_MD_G(m-1)+4\kappa_0]t},\quad (x,t)\in\mathbb{R}^n\times \mathbb{R}_+.
\end{equation*}

\end{lemma}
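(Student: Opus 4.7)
The plan is to mirror the sub-solution argument of Lemma \ref{subs} but with every inequality reversed, using the upper bound $D_G$ on $-G'$ in place of the lower bound $d_G$, and then to harvest the support estimate and the lower bound on $P_M-P$ as direct consequences.

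\textbf{Step 1 (pointwise sup-solution inequality).} First I would work inside the positive set $\{\widetilde Q>0\}$, where $\widetilde Q(x,t):=\lambda(t)-\kappa(t)|x|^2$. By the mean value theorem applied to $G$ at $P_M$ (using $G(P_M)=0$) together with the assumption \eqref{sup}, one obtains $G(P)\leq D_G(P_M-P)$ for all $P\in[0,P_M]$, hence
\begin{equation*}
-(m-1)\widetilde Q\,G(\widetilde Q)\geq -(m-1)D_G\widetilde Q(P_M-\widetilde Q).
\end{equation*}
Direct differentiation yields $\partial_t\widetilde Q=\dot\lambda-\dot\kappa|x|^2$, $\Delta\widetilde Q=-2n\kappa$, and $|\nabla\widetilde Q|^2=4\kappa^2|x|^2$. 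Substituting and regrouping as a polynomial in $|x|^2$ I would get
\begin{equation*}
\partial_t\widetilde Q-(m-1)\widetilde Q\Delta\widetilde Q-|\nabla\widetilde Q|^2-(m-1)\widetilde Q\,G(\widetilde Q)\geq A_0+A_2|x|^2+(m-1)D_G\kappa^2|x|^4,
\end{equation*}
where
\begin{equation*}
A_0=\dot\lambda+2n(m-1)\lambda\kappa-(m-1)D_G\lambda(P_M-\lambda),
\end{equation*}
\begin{equation*}
A_2=-\dot\kappa-(2n(m-1)+4)\kappa^2-(m-1)D_G\kappa(2\lambda-P_M).
\end{equation*}
The coefficients of system \eqref{lowdy} have been arranged precisely so that $A_0=0$ follows from $\eqref{lowdy}_1$ and $A_2=0$ follows from $\eqref{lowdy}_2$. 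The residual term $(m-1)D_G\kappa^2|x|^4\geq0$ then delivers the super-solution inequality for the pressure equation \eqref{pe} on $\{\widetilde Q>0\}$. Outside this set $\widetilde Q\equiv 0$ and the inequality is trivial, so $(\lambda(t)-\kappa(t)|x|^2)_+$ is a genuine weak sup-solution.

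\textbf{Step 2 (comparison and support).} With the initial ordering $P_0(x)\leq(\lambda_0-\kappa_0|x|^2)_+$ in hand, the comparison principle for the pressure equation (already invoked in Proposition~\ref{lta}) yields \eqref{comsup}. Since $\widetilde Q(x,t)=0$ whenever $|x|^2\geq\lambda(t)/\kappa(t)$, I would read off
\begin{equation*}
\mathrm{supp}(P(\cdot,t))\subset\Bigl\{|x|\leq\sqrt{\lambda(t)/\kappa(t)}\Bigr\}.
\end{equation*}
The sharp upper bound for $\lambda(t)/\kappa(t)$ derived in \eqref{rate} of Lemma~\ref{lowce} then gives exactly $R(t)=\sqrt{\lambda_0/\kappa_0}\,e^{(D_GP_M(m-1)+4\kappa_0)t/2}$.

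\textbf{Step 3 (lower bound on $P_M-P$).} From \eqref{comsup} and the trivial estimate $\widetilde Q\leq\lambda(t)$ I obtain $P_M-P(x,t)\geq P_M-\lambda(t)$ pointwise; invoking $\eqref{kapparate}_1$ from Lemma~\ref{lowce} gives the exponential lower bound $P_M-P(x,t)\geq C^{-1}e^{-[P_MD_G(m-1)+4\kappa_0]t}$.

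\textbf{Main obstacle.} The only delicate point is Step~1, where the cancellations $A_0=A_2=0$ must come out exactly; the sign of the leftover quartic term $(m-1)D_G\kappa^2|x|^4$ is what forces us to use the upper bound $D_G$ on $-G'$ rather than a tighter local estimate. All other steps are short consequences of the comparison principle and the ODE decay rates proved in Lemma~\ref{lowce}.
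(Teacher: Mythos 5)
Your proof is correct and follows essentially the same route as the paper: compute the parabolic operator applied to $\lambda(t)-\kappa(t)|x|^2$ on the positive set, bound the reaction term from below using $G(P)\le D_G(P_M-P)$ (mean value theorem with the upper bound $D_G$ on $-G'$), organize by powers of $|x|^2$ so that the constant and quadratic coefficients vanish by the choice of ODE system \eqref{lowdy} and the leftover quartic term is nonnegative, then apply the comparison principle and read off the support bound and the lower bound on $P_M-P$ from the decay rates of Lemma~\ref{lowce}.
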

\begin{proof}
Due to the differential mean value theorem and the additional biological assumption~\eqref{sup}, there exists $\xi\in[P,P_M]$ for $\forall P\in[0,P_M]$ such that
 \begin{equation}
 G(P)=G(P)-G(P_M)=-G'(\xi)(P_M-P)\leq D_G(P_M-P).
 \end{equation}
Set $Q(x,t):=(\lambda(t)-\kappa(t)|x|^2)_+$, by direct calculations in the positive parts of $Q$, we have following results:
\begin{equation*}
\partial_t Q(x,t)=\dot\lambda(t)-\dot\kappa(t)|x|^2,
\end{equation*}
\begin{equation*}
\begin{aligned}
-(m-1)Q\Delta Q
=(m-1)2n\lambda(t)\kappa(t)-(m-1)2n\kappa^2(t)|x|^2,
\end{aligned}
\end{equation*}
\begin{equation*}
-|\nabla Q|^{2}=-|2\kappa(t)x|^2=-4\kappa^2(t)|x|^2,
\end{equation*}
and
\begin{equation*}
\begin{aligned}
-(m-1)QG(Q)\geq& -(m-1)D_GQ(P_M-Q)\\
=&-(m-1)D_G(\lambda(t)-\kappa(t)|x|^2)(P_M-\lambda(t)+\kappa(t)|x|^2)\\
=&-(m-1)D_G\lambda(t)(P_M-\lambda(t))+(m-1)D_G\kappa(t)(P_M-\lambda(t))|x|^2\\
&-(m-1)D_G\kappa(t)|x|^2\lambda(t)+(m-1)D_G\kappa^2(t)|x|^4\\
\geq&-(m-1)D_G\lambda(t)(P_M-\lambda(t))+(m-1)D_G\kappa(t)(P_M-2\lambda(t))|x|^2.
\end{aligned}
\end{equation*}
Combining the above results, it follows
\begin{equation*}
\begin{aligned}
&\partial_tQ-(m-1)Q\Delta Q-|\nabla Q|^2-(m-1)QG(Q)\\
\geq& [\dot{\lambda}(t)-(m-1)d_G\lambda(t)(P_M-\lambda(t)-\frac{2n}{d_G}\kappa(t))]\\
&-|x|^2[\dot{\kappa}(t)-(m-1)d_G\kappa(t)(P_M-2\lambda(t)-\frac{2n(m-1)+4}{d_G(m-1)}\kappa(t))]\\
=&0-0|x|^2\\
=&0\quad\text{in }\{(x,t)\in\mathbb{R}^n\times \mathbb{R}_+:\ Q(x,t)>0\}
\end{aligned}
\end{equation*}

By means of the conclusions of Lemma~\ref{lowce} and the comparison principle, \eqref{comsup} naturally holds.

Similar to Lemma~\ref{lts}, it holds by a direct computation that
\begin{equation*}
P_M-P(x,t)\geq P_M-(\lambda(t)-\kappa(t)|x|^2)_+\geq P_M-\lambda(t)\geq C^{-1}e^{-[P_MD_G(m-1)+4\kappa_0]t}
\end{equation*} for $(x,t)\in \mathbb{R}^n\times \mathbb{R}_+$, where the constant $C>0$ depends on $P_M,\lambda_0,\kappa_0,D_G,R,m$. The proof is completed.
\end{proof}

\begin{proof}[\underline{\textbf{Proof of Theorem~\ref{t4}}}] Combining  Lemmas~\ref{lowce}-\ref{lows}, the proof of Theorem~\ref{t4} is completed.
\end{proof}
\section{Lipschitz's conitinuity}\label{LC}
Lipschitz's continuity of the pressure to the Cauchy problem~\eqref{de} for  the porous medium type reaction-diffusion equation is an important topic and  is crucial for proving the regularity of free boundary. The Aronson-B\'enilan estimate  originating from~\cite{AB1979}  plays an important role in proving the regularity of solutions to the porous medium type equation~\eqref{PME}. Indeed, the Aronson-B\'enilan estimate derives the semi-harmonicity property i.e. $\Delta P >-\infty$ for $t>0$.

\begin{lemma}[Aronson-B\'enilan estimate]\label{lAB}
 Let $\rho$ be the weak solution to the Cauchy problem~\eqref{de} with $m>1$ and the initial assumption $0\leq P_0\leq P_H$ for some $P_H\geq P_M$ satisfying $\inf\limits_{P\in[0,P_H]}G(P)-PG'(P)\geq 0$, then it holds
\begin{equation}\label{ab2}
\Delta P+G(P)\geq -\frac{1}{(m-1)t}\quad \text{in }\mathcal{D}'(\mathbb{R}^n\times \mathbb{R}_+).
\end{equation}
\end{lemma}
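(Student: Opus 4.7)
The plan is to set $w := \Delta P + G(P)$ and establish the pointwise lower bound $w\ge -\frac{1}{(m-1)t}$ by comparing $w$ with the autonomous ODE $\dot{\bar w}=(m-1)\bar w^2$, whose solution blowing up at $t=0^+$ is exactly $\bar w(t)=-\frac{1}{(m-1)t}$. The fact that $\bar w(0^+)=-\infty$ is precisely what makes the estimate universal in the initial data, requiring no curvature hypothesis on $P_0$ beyond the boundedness $P_0\le P_H$.

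The first step is to derive a parabolic differential inequality satisfied by $w$. Differentiating the pressure equation $\partial_t P=(m-1)Pw+|\nabla P|^2$ in $t$ and using $\partial_t\Delta P = \Delta\partial_t P$, $\Delta(Pw)=P\Delta w+2\nabla P\cdot\nabla w+w\Delta P$ (with $\Delta P=w-G(P)$), together with $\Delta|\nabla P|^2=2|D^2P|^2+2\nabla P\cdot\nabla\Delta P$, I obtain after cancellation
\begin{equation*}
\partial_t w = (m-1)P\Delta w + 2m\,\nabla P\cdot\nabla w + (m-1)w^2 - (m-1)[G(P)-PG'(P)]\,w + 2|D^2P|^2 - G'(P)|\nabla P|^2.
\end{equation*}
The standing hypotheses $G(P)-PG'(P)\ge 0$ on $[0,P_H]$ and $G'\le 0$, together with $|D^2P|^2\ge 0$, make each of the last three terms non-negative whenever $w\le 0$, whence
\begin{equation*}
\partial_t w - (m-1)P\Delta w - 2m\,\nabla P\cdot\nabla w \ \ge \ (m-1)w^2 \qquad \text{on }\{w\le 0\}.
\end{equation*}

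For the comparison step, since any smooth bounded approximation $w_\epsilon$ of $w$ is initially bounded from below by some $-C_\epsilon$, I would use the shifted family $\bar w_s(t)=-\frac{1}{(m-1)(t-s)}$, $s<0$, which solves $\dot v = (m-1)v^2$ exactly and has finite initial value $-\frac{1}{(m-1)|s|}$. For $|s|$ small enough one has $\bar w_s(0)<-C_\epsilon\le w_\epsilon(\cdot,0)$, and the parabolic maximum principle applied to $w_\epsilon-\bar w_s$ (the linear operator $(m-1)P_\epsilon\Delta+2m\,\nabla P_\epsilon\cdot\nabla$ is uniformly parabolic on the approximation, and the reaction $v\mapsto (m-1)v^2$ is locally Lipschitz) preserves this inequality for all $t>0$. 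Letting $s\to 0^-$ yields $w_\epsilon\ge -\frac{1}{(m-1)t}$.

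The main technical obstacle is that the conclusion holds only in $\mathcal{D}'$, because the equation degenerates on $\{P=0\}$ and the pointwise manipulations above are not a priori legitimate for the weak solution $P$. I would handle this through a standard approximation: lift the initial datum by $P_{0,\epsilon}=P_0+\epsilon$, truncate to a ball $B_R$ with boundary data that keeps $P_\epsilon\in[0,P_H]$, and solve the resulting non-degenerate uniformly parabolic equation on $B_R\times(0,T)$ to obtain classical solutions. The inequality for $w_\epsilon=\Delta P_\epsilon+G(P_\epsilon)$ then becomes rigorous, and passing $\epsilon\to 0$ and $R\to\infty$ via the weak stability of solutions established by Perthame et al.\ and Gavrilut--Morosanu, together with the lower semi-continuity of distributional Laplacians under $L^1_{\mathrm{loc}}$-convergence $P_\epsilon\to P$, gives the claimed distributional bound. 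The hypothesis $\inf_{[0,P_H]}(G(P)-PG'(P))\ge 0$ is used at exactly one place, to guarantee the sign of the $w$-linear coefficient, and the a priori upper bound $P_\epsilon\le P_H$ (enforced by choosing the approximating boundary condition appropriately) is what makes this sign condition persist along the regularisation.
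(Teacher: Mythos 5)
Your proposal is correct and follows essentially the same route as the paper: both derive the identical parabolic inequality for $w=\Delta P+G(P)$ by applying $\Delta+G'(P)\partial$ to the pressure equation, discard the sign-definite terms $(m-1)[G-PG']w$ (using the hypothesis), $2|D^2P|^2$, and $-G'(P)|\nabla P|^2$ (using $G'\le 0$), and then compare against the ODE solution $-\frac{1}{(m-1)(t-s)}$ with $s\to 0^-$. The only cosmetic difference is that the paper packages the negative part as $|\omega|_-$ and invokes Kato's inequality for the distributional Laplacian, whereas you restrict to $\{w\le 0\}$ and defer rigor to the regularization step; these are interchangeable devices for handling the degeneracy, and your more explicit discussion of the $\epsilon$-approximation fills in a step the paper leaves implicit.
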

\begin{proof}Let $\Delta \times \eqref{pe}+G'(P)\times\eqref{pe}$ and set $\omega:=\Delta P+G(P)$, then we have
\begin{equation}\label{ab1}
\begin{aligned}
\partial_t \omega=&(m-1)P\Delta \omega+2m\nabla P\cdot\nabla \omega+(m-1)\omega^2-(G(P)-PG'(P))(m-1)\omega\\&-G'(P)|\nabla P|^2+2\nabla^2 P:\nabla^2 P\\
\geq&(m-1)P\Delta \omega+2m\nabla P\cdot\nabla \omega+(m-1)\omega^2-(G(P)-PG'(P))(m-1)\omega.
\end{aligned}
\end{equation}
Set $|f(x)|_-:=-\min\{f(x),0\}$ and  multiplying~\eqref{ab1} by $sign(|\omega|_-)$,  thanks to Kato's inequality,  it holds in the sense of distribution that 
\begin{equation}
\begin{aligned}
\partial_t |\omega|_-\leq& (m-1)P\Delta |\omega|_-+2m\nabla P\cdot\nabla |\omega|_--(m-1)|\omega|_-^2\\
&-(G(P)-PG'(P))(m-1)|\omega|_-\\
\leq & (m-1)P\Delta |\omega|_-+2m\nabla P\cdot\nabla |\omega|_--(m-1)|\omega|_-^2.
\end{aligned}
\end{equation}

Since $f(t)=\frac{1}{(m-1)t}$ with $f(0^+)=\infty$ solves $\frac{d}{dt} f=-(m-1)f^2$,  we conclude by the comparison principle that $|\omega(\cdot,t)|_-\leq f(t)$  in the sense of distribution, which means that~\eqref{ab2} holds. The proof is completed.
\end{proof}
\begin{remark}
It should be  emphasized that the additional assumption on $G$, i.e., $\min\limits_{p\in [0,P_M]}[G(P)-PG'(P)]>0$ in \cite{PERTHAME2014}, is not necessary anymore for the porous medium type reaction-diffusion equation~\eqref{de}.
\end{remark}
\begin{lemma}[Cone monotonicity]\label{conemon} Let $x_0,x_1\in \mathbb{R}^n$, with $|x_1|$, $|x_0|>R_0$, satisfy
\begin{equation*}
\cos\langle x_1-x_0,x_0\rangle\geq\frac{R_0}{|x_0|},
\end{equation*}
where $R_0:=\inf\{R>0:supp(P_0)\subset B_{R}(0)\}$. Then, it holds for any $t>0$ that
\begin{equation*}
P(x_1,t)\leq P(x_0,t).
\end{equation*}
\end{lemma}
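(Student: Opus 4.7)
The plan is to prove the cone monotonicity by a reflection (moving-plane) argument: I would reflect $B_{R_0}(0)$ across the perpendicular bisector of the segment $[x_0,x_1]$ and compare the original pressure with its reflection on the half-space containing $x_1$. Concretely, let $H$ denote this perpendicular bisector, $\nu:=(x_1-x_0)/|x_1-x_0|$ its unit normal, and $R_H$ the corresponding reflection, so that $R_H(x_1)=x_0$. Write $H^{+}$ for the open half-space containing $x_1$ and $H^{-}$ for the one containing $x_0$.

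\textbf{Geometric step.} The signed distance from $0$ to $H$ along $\nu$ equals $c:=(|x_1|^2-|x_0|^2)/(2|x_1-x_0|)$. Rewriting the cone hypothesis as $\langle x_1-x_0,x_0\rangle\geq R_0|x_1-x_0|$ and using $|x_1|^2-|x_0|^2=|x_1-x_0|^2+2\langle x_1-x_0,x_0\rangle$, I would obtain
\[
c\;\geq\;\frac{|x_1-x_0|^2+2R_0|x_1-x_0|}{2|x_1-x_0|}\;=\;R_0+\tfrac{1}{2}|x_1-x_0|\;>\;R_0.
\]
Hence $0$ lies strictly on the $x_0$-side of $H$, so $B_{R_0}(0)\subset H^{-}$ and its reflection $R_H(B_{R_0}(0))\subset H^{+}$.

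\textbf{Reflection and half-space comparison.} Since~\eqref{pe} is invariant under orthogonal spatial isometries, $\tilde P(x,t):=P(R_H(x),t)$ is again a weak solution of~\eqref{pe} with initial datum $\tilde P_0:=P_0\circ R_H$ supported in $H^{+}$. Next I would compare $P$ and $\tilde P$ on the parabolic cylinder $H^{+}\times[0,\infty)$: on $H^{+}\times\{0\}$ the datum of $P$ vanishes identically (since $\mathrm{supp}\,P_0\subset H^{-}$), so the inequality $P(\cdot,0)\leq\tilde P(\cdot,0)$ is automatic; on the lateral boundary $H\times[0,\infty)$ the identity $R_H(x)=x$ for $x\in H$ forces $\tilde P\equiv P$. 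Invoking the weak comparison principle for~\eqref{pe} on $H^{+}$ (with common Dirichlet trace on $H$ and ordered initial data), I would conclude $P\leq\tilde P$ throughout $H^{+}\times[0,\infty)$; evaluating at $x_1\in H^{+}$ then yields $P(x_1,t)\leq\tilde P(x_1,t)=P(R_H(x_1),t)=P(x_0,t)$, which is the claim.

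\textbf{Main obstacle.} The delicate ingredient is the half-space comparison for the degenerate equation~\eqref{pe}; its whole-space analogue already underlies the uniqueness theory for the Cauchy problem~\eqref{de}. I would transport it to the half-space through a vanishing-viscosity regularization $\partial_tP_\varepsilon=(m-1)(P_\varepsilon+\varepsilon)\bigl(\Delta P_\varepsilon+G(P_\varepsilon)\bigr)+|\nabla P_\varepsilon|^2$, for which uniform parabolicity makes the classical maximum principle on $H^{+}$ with prescribed Dirichlet data directly applicable; passing to the limit $\varepsilon\to 0^{+}$ using the uniform bound $0\leq P_\varepsilon\leq P_H$ together with standard compactness recovers the inequality $P\leq\tilde P$ in $H^{+}$ and closes the proof.
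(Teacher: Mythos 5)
Your proof is correct and follows essentially the same route the paper intends: it is a detailed write-up of Alexander's reflection (moving-plane) argument from~\cite{AC1983}, which the paper cites and whose details it omits. The geometric reduction showing $B_{R_0}(0)\subset H^-$, the reflection $\tilde P=P\circ R_H$, and the half-space comparison (with matching Dirichlet data on $H$, ordered initial data on $H^+$, and compact supports handling the behavior at infinity) are precisely the standard ingredients of that reflection principle.
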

\begin{proof} Parallel to Alexander's reflection principle of~\cite{AC1983}, the detailed proof is omitted.
\end{proof}
\begin{lemma}\label{dgg}
 Let $\rho$ be the weak solution to the Cauchy problem~\eqref{de} with $m>1$. Set $\Omega(t):=\{x\in\mathbb{R}^n: P(x,t)>0\}$, then it holds \begin{equation}\label{dg}
 \Omega(t)\subset \Omega(t+s) \quad\text{ for any } t>0,\ s\geq0.
 \end{equation}
\end{lemma}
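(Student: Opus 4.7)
The plan is to reduce the support-inclusion $\Omega(t)\subset\Omega(t+s)$ to the pointwise statement ``\,$P(x,t)>0\Rightarrow P(x,t+s)>0$'' by establishing that $t\mapsto tP(x,t)$ is non-decreasing. The engine is the Aronson--B\'enilan estimate of Lemma~\ref{lAB}, which already encodes a one-sided bound on the right-hand side of the pressure equation \eqref{pe}; once this bound is combined with the gradient-squared term, the monotonicity of $tP$ follows almost mechanically.

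First I would rewrite the pressure equation \eqref{pe} as
\begin{equation*}
\partial_t P-|\nabla P|^{2}=(m-1)P\bigl(\Delta P+G(P)\bigr),
\end{equation*}
and multiply the distributional inequality $\Delta P+G(P)\geq-\tfrac{1}{(m-1)t}$ from Lemma~\ref{lAB} by the non-negative weight $(m-1)P\geq 0$. This gives
\begin{equation*}
(m-1)P\bigl(\Delta P+G(P)\bigr)\geq -\tfrac{P}{t}\quad\text{in }\mathcal{D}'(\mathbb{R}^n\times\mathbb{R}_+),
\end{equation*}
and therefore $\partial_t P\geq |\nabla P|^{2}-P/t\geq -P/t$. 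Rearranging,
\begin{equation*}
\partial_t(tP)=P+t\,\partial_t P\geq 0\quad\text{in }\mathcal{D}'(\mathbb{R}^n\times\mathbb{R}_+),
\end{equation*}
so $t\mapsto tP(x,t)$ is non-decreasing in $t$.

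The support inclusion is now immediate: if $x\in\Omega(t)$, then $tP(x,t)>0$, and for any $s\geq 0$,
\begin{equation*}
(t+s)P(x,t+s)\geq tP(x,t)>0,
\end{equation*}
so $P(x,t+s)>0$, i.e.\ $x\in\Omega(t+s)$.

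The only delicate point in this scheme is upgrading the distributional inequality $\partial_t(tP)\geq 0$ to a pointwise (in $t$, for a.e. $x$) monotonicity statement that can be evaluated at a given point of $\Omega(t)$. I would handle this either by invoking the standard H\"older continuity of the pressure for porous medium type equations with bounded reaction source (which is implicit in the weak-solution framework of the paper), or, to avoid any regularity issue, by performing the whole argument on the smooth parabolic regularisations $P_\varepsilon$ used to construct the weak solution (for which the computation is pointwise and classical), and then passing to the limit in the monotonicity $(t+s)P_\varepsilon(x,t+s)\geq tP_\varepsilon(x,t)$. I expect this regularisation step to be the only obstacle, and a purely technical one.
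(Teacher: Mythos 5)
Your proposal follows the same route as the paper: combine the pressure equation \eqref{pe} with the Aronson--B\'enilan estimate (Lemma~\ref{lAB}) to get $\partial_t P\geq -P/t$, deduce that $t\mapsto tP(x,t)$ is non-decreasing, and integrate to obtain $(t+s)P(x,t+s)\geq tP(x,t)$. Your closing remark on upgrading the distributional inequality to a pointwise monotonicity (via regularisation or continuity of the pressure) addresses a technical step the paper leaves implicit, but the core argument is identical.
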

\begin{proof}
From the pressure equation \eqref{pe}, we have
\begin{equation*}
\partial_t P=(m-1)P(\Delta P+G(P))+\nabla P\cdot\nabla P \geq(m-1)P(\Delta P+G(P)).
\end{equation*}
 Due to the Aronson-B\'enilan estimate (Lemma~\ref{lAB}), it follows
 \begin{equation*}
 \partial_t P(x,t)\geq-\frac{P(x,t)}{t}.
 \end{equation*}
 After a direct computation, we get
 \begin{equation*}
 \partial_t[tP(x,t)\big]\geq0.
 \end{equation*}
It holds by integrating above inequality on $[t,t+s]$ for $t>0,s\geq0$ that
 \begin{equation*}
(t+s)P(x,t+s)\geq tP(x,t)\quad\text{ for }x\in\mathbb{R}^n,
 \end{equation*}
  which implies
 \begin{equation*}
 \Omega(t)\subset \Omega(t+s)\text{ for any }t>0,\ s\geq0.
 \end{equation*}
The proof of Lemma~\ref{dgg} is completed.
\end{proof}
Nowadays, we focus on Lipschitz's continuity of the pressure for the Cauchy problem \eqref{de}-\eqref{pe}. As in~\cite{CVW1987}, 
the linearly scaled pressue
 \begin{equation}\label{scall}
P_{\epsilon}=\frac{1}{1+\epsilon}P\big((1+\epsilon)x,(1+\epsilon)t+t_0\big)\quad\text{for any }\epsilon>0
 \end{equation}
is also a solution to the corresponding pressure equation for the PME~\eqref{PME} as follows:
\begin{equation*}
\partial_t P=(m-1)P\Delta P+|\nabla P|^2.
 \end{equation*}Evidently, the scaling strategy~\eqref{scall} is not suitable to the case of the pressure equation~\eqref{pe}. Hence, new nonlinear scaling function and strategy are needful to get Lipschitz's  continuity of pressure for the porous medium type reaction-diffusion equation~$\eqref{de}_1$.

The Lotka-Volterra source term $PG(P)$ is non-monotonic, we need to introduce a new invariable and make a transformation to achieve our goal. Inspired by~\cite{MELLET2017}, we  set $u:=e^{-G(0)t}\rho(x,t)$ which satisfies
\begin{equation*}
\partial_t u=e^{(m-1)G(0)t}\Delta u^{m}+u[G(\frac{m}{m-1}e^{(m-1)G(0)t}u^{m-1})-G(0)].
\end{equation*}
\begin{remark}
We notice that the source term $u[G(\frac{m}{m-1}e^{(m-1)G(0)t}u^{m-1})-G(0)]$ of the above equation is monotonic on u, which helps overcome the difficulties that we will face in the next.
\end{remark}

We define the corresponding pressure $v:=\frac{m}{m-1}u^{m-1}$ and the degenerate parabolic operator as following:
\begin{equation}\label{npe}
\mathcal{L}_1(v):=\partial_t v-e^{(m-1)G(0)t}\big[(m-1)v\Delta v+|\nabla v|^2\big]-(m-1)v[G(e^{(m-1)G(0)t}v)-G(0)]=0.
\end{equation}
\begin{lemma}\label{subs2}
Let $\beta(\epsilon,t):=\frac{1}{(m-1)G(0)}\log\big[1+(e^{(m-1)G(0)t}-1)(1+\epsilon)\big]$ with $\epsilon>0$ and $\beta(\epsilon,0)=0$ satisfying
\begin{equation}\label{est6}
\frac{\partial_t\beta(\epsilon,t)}{1+\epsilon}e^{(m-1)G(0)\beta(\epsilon,t)}-e^{(m-1)G(0)t}=0.
\end{equation}
Then, it follows
\begin{equation*}
\mathcal{L}_1\big(\frac{1}{1+\epsilon}v((1+\epsilon)x,\beta(\epsilon,t)+t_0)\big)\leq 0\quad\text{for }(x,t)\in\mathbb{R}^n\times \mathbb{R}_+\text{ and }t_0\geq0.
\end{equation*}
\end{lemma}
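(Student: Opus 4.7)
The plan is to plug the ansatz into the operator $\mathcal{L}_1$ from~\eqref{npe} and verify the sub-solution inequality directly. Write $A:=(m-1)G(0)$, $a:=e^{At}$, $b:=e^{A\beta(\epsilon,t)}$, and set $\tilde v(x,t):=\frac{1}{1+\epsilon}v((1+\epsilon)x,\beta(\epsilon,t)+t_0)$. By the chain rule, $\partial_t\tilde v=\frac{\partial_t\beta}{1+\epsilon}\partial_s v$, $\nabla\tilde v=\nabla_y v$, and $\Delta\tilde v=(1+\epsilon)\Delta_y v$, where all $v$-derivatives are to be read as evaluated at $((1+\epsilon)x,\beta+t_0)$. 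In particular $\tilde v\,\Delta\tilde v=v\,\Delta v$ and $|\nabla\tilde v|^2=|\nabla_y v|^2$, so every term appearing in $\mathcal{L}_1(\tilde v)$ can be expressed in the variables of the inner $v$.

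Next I use that $v$ itself solves $\mathcal{L}_1(v)=0$ to eliminate the time derivative, namely
\begin{equation*}
\partial_s v=be^{At_0}\bigl[(m-1)v\Delta v+|\nabla v|^2\bigr]+(m-1)v\bigl[G(be^{At_0}v)-G(0)\bigr].
\end{equation*}
Substituting this splits $\mathcal{L}_1(\tilde v)$ into a ``diffusion part'' with coefficient $\tfrac{\partial_t\beta}{1+\epsilon}be^{At_0}-a$ multiplying $(m-1)v\Delta v+|\nabla v|^2$, and a ``source part'' involving $G(be^{At_0}v)$ and $G(\tfrac{av}{1+\epsilon})$. The defining identity~\eqref{est6} of $\beta$, which can be rewritten as $\tfrac{\partial_t\beta}{1+\epsilon}b=a$, collapses the diffusion coefficient to $a(e^{At_0}-1)$.

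For the source part I exploit the explicit relation $b=1+(a-1)(1+\epsilon)$ implied by the formula for $\beta$. It yields the two key inequalities $\tfrac{\partial_t\beta}{1+\epsilon}=\tfrac{a}{b}\geq\tfrac{1}{1+\epsilon}$ and $be^{At_0}v\geq bv\geq\tfrac{av}{1+\epsilon}$. Because $G$ is monotone decreasing and $G(0)>0\geq G(P_M)$, both brackets $G(be^{At_0}v)-G(0)$ and $G(\tfrac{av}{1+\epsilon})-G(0)$ are non-positive, and the first is the more negative. Multiplying a more negative quantity by a larger positive coefficient preserves the sign, which forces the source contribution to $\mathcal{L}_1(\tilde v)$ to be non-positive; together with the vanishing of the diffusion coefficient at $t_0=0$ this already settles the unshifted case.

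The main obstacle is the extra diffusion term $a(e^{At_0}-1)\bigl[(m-1)v\Delta v+|\nabla v|^2\bigr]$ that survives for $t_0>0$. My plan is to eliminate it by substituting the PDE for $v$ a second time, rewriting $(m-1)v\Delta v+|\nabla v|^2=e^{-A(\beta+t_0)}\{\partial_s v-(m-1)v[G(be^{At_0}v)-G(0)]\}$ and recombining with the source bracket; the Aronson--B\'enilan estimate from Lemma~\ref{lAB}, applied to the original pressure $P$ and translated through $v=e^{-At}P$ into a one-sided bound on $\Delta v$, should provide exactly the semi-convexity needed to close the sign and yield $\mathcal{L}_1(\tilde v)\leq 0$ for all $t_0\geq 0$.
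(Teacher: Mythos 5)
Your source-term argument is sound and essentially matches the paper's: after identifying $\frac{\partial_t\beta}{1+\epsilon}e^{A\beta}=e^{At}$, you use $\partial_t\beta\geq 1$, the monotonicity of $G$, and the comparison of the two arguments of $G$ to conclude that the reaction contribution is non-positive. The divergence, and the gap, is in how you handle the diffusion part.

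You read $\mathcal{L}_1$ literally from~\eqref{npe}, with the coefficient $e^{(m-1)G(0)t}$, which leaves the residual
$e^{At}\big(e^{At_0}-1\big)\big[(m-1)v\Delta v+|\nabla v|^2\big]$
for $t_0>0$, and you propose to kill it via the Aronson--B\'enilan bound. That cannot work, for two reasons. First, the sign is wrong: Lemma~\ref{lAB} furnishes a \emph{lower} bound on $\Delta P$ (equivalently on $\partial_t P$, hence on $(m-1)v\Delta v+|\nabla v|^2$), but what you need to make the residual non-positive is an \emph{upper} bound $(m-1)v\Delta v+|\nabla v|^2\leq 0$, and no one-sided semi-convexity estimate delivers that. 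Second, and decisively, this quantity is genuinely positive near the free boundary: there $v=0$ and $(m-1)v\Delta v+|\nabla v|^2=|\nabla v|^2>0$ by non-degeneracy, so the residual is strictly positive and the inequality $\mathcal{L}_1(\tilde v)\leq 0$ with the literal $e^{At}$ coefficient is in fact false for $t_0>0$. The lemma's statement is imprecise on this point, but the intended operator (used both in the paper's proof, see the $e^{(m-1)G(0)(t+t_0)}$ appearing throughout \eqref{est1}--\eqref{est3} and the factored $e^{(m-1)G(0)t_0}$ in \eqref{est7}, and required for the comparison against $v(\cdot,\cdot+t_0)$ in Lemma~\ref{lip1}) carries the time-shifted exponential $e^{(m-1)G(0)(t+t_0)}$. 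With that choice the diffusion coefficient is $e^{At_0}\big[\tfrac{\partial_t\beta}{1+\epsilon}e^{A\beta}-e^{At}\big]$, which vanishes identically by~\eqref{est6} for every $t_0\geq 0$; there is no residual to absorb, and the proof reduces to the source-term estimate you already have. Your workaround should be dropped and replaced by this observation about which operator the shifted comparison function actually satisfies.
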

\begin{proof}For simplicity, we set $v_\epsilon:=\frac{1}{1+\epsilon}v((1+\epsilon)x,\beta(\epsilon,t)+t_0)$ and $v:=v((1+\epsilon)x,\beta(\epsilon,t)+t_0)$.

We first show the time derivative of $v_{\epsilon}\geq 0$ as
\begin{equation}\label{est1}
\begin{aligned}
\partial_tv_\varepsilon=
&\partial_t\big(\frac{1}{1+\epsilon}v((1+\epsilon)x,\beta(\epsilon,t)+t_0)\big)\\
=&\frac{\partial_t\beta(\epsilon,t)}{1+\epsilon}D_tv((1+\epsilon)x,\beta(\epsilon,t)+t_0)\\
=&\frac{\partial_t\beta(\epsilon,t)}{1+\epsilon}e^{(m-1)G(0)(\beta(\epsilon,t)+t_0)}[(m-1)v\Delta_Xv+|\nabla_X v|^2]\\
&+\frac{\partial_t\beta(\epsilon,t)}{1+\epsilon}(m-1)v[G(e^{(m-1)G(0)(\beta(\epsilon,t)+t_0)}v)-G(0)],
\end{aligned}
\end{equation}
where $D_t,\nabla_X,\Delta_X$ are derivatives of temporal and spatial positions.

We turn to the spatial derivative terms and the source term:
\begin{equation}\label{est2}
e^{(m-1)G(0)(t+t_0)}[(m-1)v_\epsilon\Delta v_\epsilon+|\nabla v_\epsilon|^2]=e^{(m-1)G(0)(t+t_0)}[(m-1)v\Delta_X v+|\nabla_X v|^2],
\end{equation}
\begin{equation}\label{est3}
(m-1)v_\epsilon[G(e^{(m-1)G(0)(t+t_0)}v_\epsilon)-G(0)]= (m-1)\frac{v}{1+\epsilon}[G(e^{(m-1)G(0)(t+t_0)}\frac{v}{1+\epsilon})-G(0)].
\end{equation}
To this end, we need to investigate some preliminary properties of $\beta(\epsilon,t)$ as follows:
\begin{equation}\label{est4}
\begin{aligned}
\partial_t\beta(\epsilon,t)=&[e^{(m-1)G(0)t}e^{-(m-1)G(0)\beta(\epsilon,t)}](1+\epsilon)\\
=&\frac{e^{(m-1)G(0)t}(1+\epsilon)}{1+(e^{(m-1)G(0)t}-1)(1+\epsilon)}\\
\geq&1,
\end{aligned}
\end{equation}
\begin{equation}\label{est5}
\begin{aligned}
\frac{e^{(m-1)G(0)t}}{1+\epsilon}-e^{(m-1)G(0)\beta(\epsilon,t)}=&\frac{e^{(m-1)G(0)t}}{1+\epsilon}-1-(e^{(m-1)G(0)t}-1)(1+\epsilon)\\
\leq&0.
\end{aligned}
\end{equation}
Combining \eqref{est1}-\eqref{est3}, it follows
\begin{equation}\label{est7}
\begin{aligned}
&\mathcal{L}_1(v_\epsilon)\\
=&e^{(m-1)G(0)t_0}\big[\frac{\partial_t\beta(\epsilon,t)e^{(m-1)G(0)\beta(\epsilon,t)}}{1+\epsilon}-e^{(m-1)G(0)t}\big]\big[(m-1)v\Delta_X v+|\nabla_X v|^2\big]\\
&+\frac{\partial_t\beta(\epsilon,t)}{1+\epsilon}(m-1)v[G(e^{(m-1)G(0)(\beta(\epsilon,t)+t_0)}v)-G(0)]\\
&-(m-1)\frac{v}{1+\epsilon}[G(e^{(m-1)G(0)(t+t_0)}\frac{v}{1+\epsilon})-G(0)]\\
=&\frac{\partial_t\beta(\epsilon,t)}{1+\epsilon}(m-1)v[G(e^{(m-1)G(0)(\beta(\epsilon,t)+t_0)}v)-G(0)]\\
&-(m-1)\frac{v}{1+\epsilon}[G(e^{(m-1)G(0)(t+t_0)}\frac{v}{1+\epsilon})-G(0)]\\
\leq &\frac{\partial_t\beta(\epsilon,t)}{1+\epsilon}(m-1)v\big[G(e^{(m-1)G(0)(\beta(\epsilon,t)+t_0)}v)-G(e^{(m-1)G(0)(t+t_0)}\frac{v}{1+\epsilon})\big],
\end{aligned}
\end{equation}
where the fifth line is derived by \eqref{est6} and the last inequality is due to \eqref{est4}.
 Furthermore, it holds by \eqref{est5} and the monotonic decreasing property of $G$ that
 \begin{equation}\label{est8}
 G(e^{(m-1)G(0)(\beta(\epsilon,t)+t_0)}v)-G(e^{(m-1)G(0)(t+t_0)}\frac{v}{1+\epsilon})\leq 0.
 \end{equation}

 Therefore, we conclude from \eqref{est7}-\eqref{est8} that
 \begin{equation*}
 \mathcal{L}_1\big(\frac{1}{1+\epsilon}v((1+\epsilon)x,\beta(\epsilon,t)+t_0)\big)\leq 0,
 \end{equation*}
and the proof is completed.
\end{proof}

Based on Lemma~\ref{subs2}, the next two preliminary lemmas are ready to verify that the initial data at any time $t_0>T$  with a sufficiently large $T>T_0$ satisfies
\begin{equation*}
\frac{1}{1+\epsilon}v((1+\epsilon)x,t_0)\leq v(x,t_0),\quad x\in \mathbb{R}^n.
\end{equation*}
\begin{lemma}\label{l2est}
Under the assumptions as in Theorem~\ref{t3}, let $(\rho,P)$ be a weak solution to the Cauchy problem~\eqref{de}-\eqref{pe}. Then, for $ R_0:=\inf\{R>0:supp(P_0)\subset B_{R}(0)\}$ and
$T_0:=\inf\{t>0:B_{R_0}(0)\subset supp\big({P(t)\big)}\}$, there exists $T_1>T_0$ such that it holds
\begin{equation}\label{res1}
\frac{\rho_M}{2}\leq\rho(x,t)\leq\rho_H,\quad (x,t)\in B_{R_0+2}\times[T_1,\infty),
\end{equation}
\begin{equation}\label{res3}
\begin{aligned}
\|\nabla\rho\|_{L^2(B_{R_0+2}\times[t,t+1])}^2\leq \gamma(t), \quad t\geq T_1,
 \end{aligned}
 \end{equation}
where $\gamma(t)\to0$, as $t\to\infty$.  Moreover, there exists $T_2>T_1$ such that
\begin{equation}\label{res2}
\||\nabla\rho|^{s+1}\|_{L^2(B_{R_0+1}\times[t+\frac{1}{4},t+1])}^2\leq C(T_1,s,R_0)\|\nabla\rho\|_{L^2(B_{R_0+2}\times[t,t+1])}^2
\end{equation}for  $s\in\mathbb{N_+}\text{ and }t\geq T_2$.
\end{lemma}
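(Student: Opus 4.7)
The plan is to combine the large time asymptotics of Theorem~\ref{t1} with the support expansion principle of Lemma~\ref{dgg} to enter a regime where the equation $\partial_t\rho=\mathrm{div}(m\rho^{m-1}\nabla\rho)+\rho g(\rho)$ is uniformly parabolic on $B_{R_0+2}$, and then to apply classical Caccioppoli and De Giorgi--Nash--Moser arguments on that non-degenerate region.

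For the pointwise bound~\eqref{res1}, the upper bound $\rho\leq\rho_H$ follows by comparison with the spatially constant super-solution $f(t)$ of $\dot f=(m-1)fG(f)$ with $f(0)=P_H$, which is non-increasing since $G(P_H)\leq G(P_M)=0$. For the lower bound, the definition of $T_0$ together with Lemma~\ref{dgg} yields $B_{R_0}\subset\Omega(t)$ for all $t\geq T_0$. Pick $\alpha_0,\beta_0$ satisfying~\eqref{ini1} with $\mathrm{supp}((\alpha_0-\beta_0|x|^2)_+)\subset B_{R_0}$ and $(\alpha_0-\beta_0|x|^2)_+\leq P(\cdot,T_0)$; applying Theorem~\ref{t1} from time $T_0$ then forces $P(x,t)\to P_M$ uniformly on $B_{R_0+2}$ at the rate $(1+t)^{-1}$. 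Taking $T_1>T_0$ large enough gives $P(x,t)\geq \frac{m}{m-1}(\rho_M/2)^{m-1}$, equivalently $\rho\geq\rho_M/2$, throughout $B_{R_0+2}\times[T_1,\infty)$.

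For the energy estimate~\eqref{res3}, on this cylinder the diffusion coefficient $m\rho^{m-1}$ is pinched between $m(\rho_M/2)^{m-1}$ and $m\rho_H^{m-1}$, so the equation is uniformly parabolic. Choose a smooth cutoff $\eta(x,s)$ equal to one on $B_{R_0+2}\times[t,t+1]$ and supported in $B_{R_0+3}\times[t-\tfrac12,t+1]$, and test the $\rho$ equation against $\eta^2(\rho-\rho_M)$. Writing $\rho g(\rho)=\rho(g(\rho)-g(\rho_M))$ and using that $g\in C^1$ near $\rho_M$, the source contribution is quadratic in $\rho-\rho_M$ up to a bounded factor. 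After integrating by parts and absorbing, a standard Caccioppoli inequality produces
\begin{equation*}
\int_t^{t+1}\!\!\int_{B_{R_0+2}}|\nabla\rho|^2\,dx\,ds\leq C\,\|\rho-\rho_M\|_{L^\infty(B_{R_0+3}\times[t-\frac{1}{2},t+1])}^2,
\end{equation*}
and Theorem~\ref{t1} makes the right-hand side $\gamma(t)=O((1+t)^{-2})\to 0$.

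For the higher-power estimate~\eqref{res2}, the equation restricted to $B_{R_0+2}\times[T_1,\infty)$ is uniformly parabolic with bounded measurable coefficients, so De Giorgi--Nash--Moser gives H\"older continuity of $\rho$, and Schauder bootstrapping then produces $\|\nabla\rho\|_{L^\infty(B_{R_0+1}\times[t+1/4,t+1])}\leq M(T_1,R_0)$ for every $t\geq T_2$ with $T_2>T_1$ large enough to accommodate the parabolic regularisation. Interpolating against the $L^2$ norm yields
\begin{equation*}
\int_{t+1/4}^{t+1}\!\!\int_{B_{R_0+1}}|\nabla\rho|^{2s+2}\,dx\,ds\leq M^{2s}\int_{t+1/4}^{t+1}\!\!\int_{B_{R_0+1}}|\nabla\rho|^2\,dx\,ds\leq C(T_1,s,R_0)\,\|\nabla\rho\|_{L^2(B_{R_0+2}\times[t,t+1])}^2.
\end{equation*}
The main obstacle will be the Caccioppoli step for~\eqref{res3}: the cutoff must be kept strictly inside the non-degenerate region so that $m\rho^{m-1}$ stays bounded away from zero, and the reaction term must be handled cleanly via $g(\rho_M)=0$; once this is set up, uniform parabolicity reduces the remaining work to textbook parabolic theory.
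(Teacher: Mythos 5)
Your proof is correct, and it takes a genuinely different route from the paper for both the $L^2$ gradient decay and the higher-power estimate. For \eqref{res3}, the paper does not use a Caccioppoli inequality: it rewrites the density equation as $\partial_t\rho = \rho(\Delta P + G(P)) + \nabla\rho\cdot\nabla P$, inserts the Aronson--B\'enilan bound $\Delta P + G(P)\geq -\tfrac{1}{(m-1)t}$ to get $\partial_t\rho \geq -\tfrac{\rho}{(m-1)t} + m\rho^{m-2}|\nabla\rho|^2$, and integrates directly in time over $[t,t+1]$, absorbing $|\nabla\rho|^2$ via the lower bound on $\rho^{m-2}$; this yields $\gamma(t)=\|\rho(t+1)-\rho(t)\|_{L^\infty(B_{R_0+2})} + C\log\tfrac{t+1}{t}$. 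Your Caccioppoli argument testing against $\eta^2(\rho-\rho_M)$ is cleaner and even gives the slightly better rate $\gamma(t)=O(\|\rho-\rho_M\|_{L^\infty}^2)$, but both are valid since the lemma only requires $\gamma(t)\to 0$. For \eqref{res2}, the paper performs an explicit Moser-type iteration with the test function $\nabla\cdot(|\nabla\rho|^{2s}\zeta^2\nabla\rho)$, producing the constant $C(T_1,s,R_0)$ directly; you shortcut this by observing that on the non-degenerate cylinder the equation is uniformly parabolic, invoking De~Giorgi--Nash--Moser and Schauder to obtain a time-uniform bound $\|\nabla\rho\|_{L^\infty}\leq M$, and then trivially interpolating $|\nabla\rho|^{2s+2}\leq M^{2s}|\nabla\rho|^2$. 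That is a legitimate and shorter route, at the cost of relying on more black-box parabolic regularity theory; the paper's iteration is longer but self-contained and is also what naturally dovetails with the De~Giorgi iteration it performs in the following Lemma~\ref{lip}. One small overreach on your part: the lower bound in \eqref{res1} should be derived from the standing hypothesis of Theorem~\ref{t3} that $P\to P_M$ locally uniformly (with no rate needed), rather than from Theorem~\ref{t1}, whose additional structural assumption~\eqref{G} is not imposed in Theorem~\ref{t3}; the upper bound $\rho\leq\rho_H$ by comparison with $f(t)$ is fine.
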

\begin{proof}
\emph{Step 1.} Thanks to the assumption on the large time behavior, there exists a sufficiently large time $T_1>T_0$ such that \eqref{res1} holds.

 We rewrite the density equation~\eqref{de} as
 \begin{equation*}
 \partial_t\rho=\rho(\Delta P+G(P))+\nabla\rho\cdot\nabla P.
 \end{equation*}
 Inserting the Aronson-B\'enilan estimate (Lemma~\ref{lAB}) into the above equation, it follows
 \begin{equation*}
 \partial_t\rho\geq -\frac{\rho}{(m-1)t}+\nabla\rho\cdot\nabla P.
 \end{equation*}
 We integrate the above inequality on the time interval $[t,t+1]$ and get
 \begin{equation*}
 \begin{aligned}
 \int_{t}^{t+1}\nabla\rho\cdot\nabla Pds\leq \rho(t+1)-\rho(t)+\frac{\rho_M}{m-1}\int_{t}^{t+1}\frac{1}{s}ds.
 \end{aligned}
 \end{equation*}
 Taking both $(x,t)\in B_{R_0+2}\times[T_1,\infty)$ and Proposition~\ref{lts} into consideration, we obtain
 \begin{equation*}
 \begin{aligned}
 \int_{t}^{t+1}\hskip-6pt\int_{B_{R_0+2}}|\nabla\rho(x,s)|^2dxds\leq \|\rho(t+1)-\rho(t)\|_{L^\infty(B_{R_0+2})}
 +\frac{2^{m-1}|B_{R_0+2}|}{m(m-1)\rho_M^{m-3}}\log \big[\frac{t+1}{t}\big].
 \end{aligned}
 \end{equation*}
 Thus, \eqref{res3} holds if we set
 \begin{equation}\label{gamma}
 \begin{aligned}
 \gamma(t):=\|\rho(t+1)-\rho(t)\|_{L^\infty(B_{R_0+2})}
 +\frac{2^{m-1}|B_{R_0+2}|}{m(m-1)\rho_M^{m-3}}\log \big[\frac{t+1}{t}\big]\to0,\quad \text{as }t\to\infty.
 \end{aligned}
 \end{equation}

\noindent\emph{Step 2.}The porous medium type reaction-diffusion equation~\eqref{de} is rewritten as
\begin{equation}\label{dde}
\partial_t\rho=\nabla\cdot(m\rho^{m-1}\nabla\rho)+\rho G(P).
\end{equation}

Let $s\in\mathbb{N_+}$ and $\zeta\in C_{0}^{\infty}(B_{R_0+2}\times(t,t+1))$ be smooth test function with $\zeta=1$ in $B_{R_0+1}\times[t+\frac{1}{4},t+1]$. We multiply all terms in~\eqref{dde} by $\nabla\cdot(|\nabla\rho|^{2s}\zeta^2\nabla\rho)$ and integrate on $B_{R_0+2}$, then we get
\begin{equation}\label{l21}
\begin{aligned}
\int_{B_{R_0+2}}&\partial_t\rho\nabla\cdot(|\nabla\rho|^{2s}\zeta^2\nabla\rho)dx\\
=&\frac{-1}{2s+2}\frac{d}{dt}\int_{B_{R_0+2}}|\nabla\rho|^{2s+2}\zeta^2dx\underbrace{+\frac{1}{s+1}\int_{B_{R_0+2}}|\nabla\rho|^{2s+2}\zeta\zeta_tdx}_{\uppercase\expandafter{\romannumeral1}},\\
\end{aligned}
\end{equation}
\begin{equation}\label{l22}
\begin{aligned}
\int_{B_{R_0+2}}&\nabla\cdot(m\rho^{m-1}\nabla\rho)\nabla\cdot(|\nabla\rho|^{2s}\zeta^2\nabla\rho)=\int_{B_{R_0+2}}(m\rho^{m-1}\rho_{x_k})_{x_l}(|\nabla\rho|^{2s}\zeta^2\rho_{x_l})_{x_k}dx\\
=&\int_{B_{R_0+2}}\big((m-1)m\rho^{m-2}\rho_{x_l}\rho_{x_k}+m\rho^{m-1}\rho_{x_lx_k}\big)\\
&\times\big(|\nabla\rho|^{2s}\zeta^2\rho_{x_kx_l}+2|\nabla\rho|^{2s}\zeta\zeta_{x_k}\rho_{x_l}+2s|\nabla\rho|^{2s-2}\zeta^2\rho_{x_l}\rho_{x_j}\rho_{x_lx_j}\big)dx\\
=&\int_{B_{R_0+2}}m\rho^{m-1}|\nabla\rho|^{2s}\zeta^2(\rho_{x_lx_k})^2dx+2sm\int_{B_{R_0+2}}\rho^{m-1}|\nabla\rho|^{2s-2}\zeta^2\sum_{j=1}^{n}(\rho_{x_k}\rho_{x_kx_j})^2dx\\
&+\underbrace{\int_{B_{R_0+2}}2m\rho^{m-1}|\nabla\rho|^{2s}\rho_{x_kx_l}\zeta\zeta_{x_k}\rho_{x_l}dx}_{\uppercase\expandafter{\romannumeral2}}\\
&+\underbrace{\int_{B_{R_0+2}}(2s+1)(m-1)m\rho^{m-2}\rho_{x_l}\rho_{x_k}|\nabla\rho|^{2s}\zeta^2\rho_{x_kx_l}dx}_{\uppercase\expandafter{\romannumeral3}}\\
&+\underbrace{\int_{B_{R_0+2}}2(m-1)m\rho^{m-2}\rho_{x_l}\rho_{x_k}|\nabla\rho|^{2s}\zeta\zeta_{x_k}\rho_{x_l}dx}_{\uppercase\expandafter{\romannumeral4}},
\end{aligned}
\end{equation}
and 
\begin{equation}\label{l23}
\int_{B_{R_0+2}}\rho G(P)\nabla\cdot(|\nabla\rho|^{2s}\zeta^2\nabla\rho)dx=-\underbrace{\int_{B_{R_0+2}}(G(p)+m\rho^{m-2}G'(P))|\nabla\rho|^{2s+2}\zeta^2dx}_{\uppercase\expandafter{\romannumeral5}}.
\end{equation}
Combining \eqref{l21}-\eqref{l23}, we obtain
\begin{equation}\label{ls4}
\begin{aligned}
\frac{1}{2s+2}&\frac{d}{dt}\int_{B_{R_0+2}}|\nabla\rho|^{2s+2}\zeta^2dx+\int_{B_{R_0+2}}m\rho^{m-1}|\nabla\rho|^{2s}\zeta^2(\rho_{x_lx_k})^2dx\\
&+2sm\int_{B_{R_0+2}}\rho^{m-1}|\nabla\rho|^{2s-2}\zeta^2\sum_{j=1}^{n}(\rho_{x_k}\rho_{x_kx_j})^2dx=\uppercase\expandafter{\romannumeral1}+\uppercase\expandafter{\romannumeral2}+\uppercase\expandafter{\romannumeral3}+\uppercase\expandafter{\romannumeral4}+\uppercase\expandafter{\romannumeral5}.
\end{aligned}
\end{equation}
By means of Young's inequality, we have
\begin{equation}\label{ls5}
\begin{aligned}
\uppercase\expandafter{\romannumeral2}\leq\frac{m}{4}\int_{B_{R_0+2}}\rho^{m-1}(\rho_{x_kx_l})^2|\nabla\rho|^{2s}\zeta^2dx+4m\int_{B_{R_0+2}}\rho^{m-1}|\nabla\rho|^{2s+2}|\nabla\zeta|^2dx,
\end{aligned}
\end{equation}
\begin{equation}\label{ls6}
\begin{aligned}
\uppercase\expandafter{\romannumeral3}&=- \int_{B_{R_0+2}}(2s+1)m(m-1)\rho^{\frac{m-1}{2}}\rho^{\frac{m-3}{2}}\rho_{x_kx_l}\rho_{x_k}\rho_{x_l}\zeta^2|\nabla \rho|^{2s}dx\\
\leq \frac{m}{4}&\int_{B_{R_0+2}}\rho^{m-1}(\rho_{x_kx_l})^2|\nabla \rho|^{2s}\zeta^2dx+(2s+1)^2\int_{B_{R_0+2}}m(m-1)^2\rho^{m-3}|\nabla \rho|^{2s+4}\zeta^2dx,
\end{aligned}
\end{equation}
and
\begin{equation}\label{ls7}
\begin{aligned}
\uppercase\expandafter{\romannumeral4}=&-2\int_{B_{R_0+2}}(m-1)m\rho^{m-2}\rho_{x_k}2|\nabla\rho|^{2s+2}\zeta\zeta_{x_k}dx\\
\leq&\int_{B_{R_0+2}}m(m-1)\rho^{m-2}|\nabla\rho|^{2s+4}\zeta^2dx+\int_{B_{R_0+2}}m(m-1)\rho^{m-2}|\nabla\rho|^{2s+2}|\nabla\zeta|^2dx.
\end{aligned}
\end{equation}
Taking \eqref{ls5}-\eqref{ls7} into \eqref{ls4}, it follows
\begin{equation}\label{l29}
\begin{aligned}
\frac{1}{2s+2}&\frac{d}{dt}\int_{B_{R_0+2}}|\nabla\rho|^{2s+2}\zeta^2dx+\frac{m}{2}\int_{B_{R_0+2}}\rho^{m-1}|\nabla\rho|^{2s}\zeta^2(\rho_{x_lx_k})^2dx\\
&+2sm\int_{B_{R_0+2}}\rho^{m-1}|\nabla\rho|^{2s-2}\zeta^2\sum_{j=1}^{n}(\rho_{x_k}\rho_{x_kx_j})^2dx\\
\leq&\int_{B_{R_0+2}}\big[m(2s+1)^2(m-1)^2\rho^{m-3}+m(m-1)\rho^{m-2}\big]|\nabla\rho|^{2s+4}\zeta^2dx\\
+&\int_{B_{R_0+2}}\big[\frac{\zeta|\zeta_t|}{s+1}+m(m-1)\rho^{m-2}|\nabla\zeta|^2+\big(G(P)+m\rho^{m-2}G'(P)\big)\zeta^2\big]|\nabla\rho|^{2s+2}dx.
\end{aligned}
\end{equation}
Let $C(\rho_M,s):=4m(2s+1)^2(m-1)^2\rho_M^{m-3}+2m(m-1)\rho_M^{m-2}$ and $C(G(0),\rho_M,s):=\frac{1}{s+1}+2m(m-1)\rho_M^{m-2}+G(0)$, it follows from~\eqref{l29} and the fact $G'(P)\leq 0$ for $P\in(0,P_H)$  that
\begin{equation}\label{l210}
\begin{aligned}
&\frac{1}{2s+2}\frac{d}{dt}\int_{B_{R_0+2}}|\nabla\rho|^{2s+2}\zeta^2dx+\frac{m\rho_M^{m-1}}{2^{m-1}}\int_{B_{R_0+2}}|\nabla\rho|^{2s}\zeta^2(\rho_{x_lx_k})^2dx\leq C(\rho_M,s)\times\\
&\int_{B_{R_0+2}}|\nabla\rho|^{2s+4}\zeta^2dx+C(G(0),\rho_M,s)\int_{B_{R_0+2}}(\zeta|\zeta_t|+|\nabla\zeta|^2+\zeta^2)|\nabla\rho|^{2s+2}dx.
\end{aligned}
\end{equation}
We estimate the term $\int_{B_{R_0+2}}|\nabla\rho|^{2s+4}\zeta^2dx$ as follows:
\begin{equation*}
\begin{aligned}
\int_{B_{R_0+2}}&|\nabla\rho|^{2s+4}\zeta^2dx=-\int_{B_{R_0+2}}|\nabla\rho|^{2s+2}\rho_{x_l}(\rho_M-\rho)_{x_l}\zeta^2dx\\
=&\int_{B_{R_0+2}}(2s+2)|\nabla\rho|^{2s}\rho_{x_j}\rho_{x_lx_j}\rho_{x_l}(\rho_M-\rho)\zeta^2dx+\int_{B_{R_0+2}}|\nabla\rho|^{2s+2}\rho_{x_lx_l}(\rho_M-\rho)\zeta^2dx\\
&+2\int_{B_{R_0+2}}|\nabla\rho|^{2s+2}\rho_{x_l}\zeta_{x_l}(\rho_M-\rho)\zeta dx\\
\leq& \frac{3}{4}\int_{B_{R_0+2}}|\nabla\rho|^{2s+4}\zeta^2dx+(2s+2)^2\int_{B_{R_0+2}}|\nabla\rho|^{2s}(\rho_{x_jx_l})^2\zeta^2(\rho_M-\rho)^2dx\\
&+\int_{B_{R_0+2}}|\nabla\rho|^{2s}(\rho_{x_lx_l})^2(\rho_M-\rho)^2\zeta^2dx+4\int_{B_{R_0+2}}|\nabla\rho|^{2s+2}|\nabla\zeta|^2(\rho_M-\rho)^2dx.
\end{aligned}
\end{equation*}
Hence, we have
\begin{equation}\label{l211}
\begin{aligned}
&\int_{B_{R_0+2}}|\nabla\rho|^{2s+4}\zeta^2dx\leq 4(2s+2)^2\int_{B_{R_0+2}}|\nabla\rho|^{2s}(\rho_{x_jx_l})^2\zeta^2(\rho_M-\rho)^2dx\\
&+4\int_{B_{R_0+2}}|\nabla\rho|^{2s}(\rho_{x_lx_l})^2(\rho_M-\rho)^2\zeta^2dx+16\int_{B_{R_0+2}}|\nabla\rho|^{2s+2}|\nabla\zeta|^2(\rho_M-\rho)^2dx.
\end{aligned}
\end{equation}
Inserting \eqref{l210} into \eqref{l211} and integrating on $(t,t+1)$ for $t>T_1$ determined later, it follows
\begin{equation*}
\begin{aligned}
&\int_{t}^{t+1}\hskip-8pt\int_{B_{R_0+2}}|\nabla\rho|^{2s+4}\zeta^2dxd\sigma\leq \gamma'(t)\frac{C(\rho_M,s)2^{m+1}(2s+3)}{m\rho_M^{m-1}}\int_{t}^{t+1}\hskip-8pt\int_{B_{R_0+2}}|\nabla\rho|^{2s+4}\zeta^2dxd\sigma\\
&+\gamma'(t)\frac{C(G(0),\rho_M,s)2^{m+1}(2s+3)}{m\rho_M^{m-1}}\int_{t}^{t+1}\hskip-8pt\int_{B_{R_0+2}}(\zeta|\zeta_t|+|\nabla\zeta|^2+\zeta^2)|\nabla\rho|^{2s+2}dxd\sigma\\
&+16\rho_M^2\int_{t}^{t+1}\hskip-8pt\int_{B_{R_0+2}}|\nabla\rho|^{2s+2}|\nabla\zeta|^2dxd\sigma,
\end{aligned}
\end{equation*}
where $\gamma'(t)=\|\rho_M-\rho\|_{L^{\infty}(B_{R_0+2}\times(t,t+1))}^2$ and $\gamma'(t)\to 0$ as $t\to\infty$ resulting from  Proposition~\ref{lta}. We choose $T_2>T_1>0$ such that $\gamma'(t)\frac{C(\rho_M,s)2^{m+1}(2s+3)}{m\rho_M^{m-1}}\leq \frac{1}{2}$ for $t\geq T_2$, thus it concludes
\begin{equation*}
\begin{aligned}
\int_{t}^{t+1}&\hskip-8pt\int_{B_{R_0+2}}|\nabla\rho|^{2s+4}\zeta^2dxd\sigma\\
\leq& (32\rho_M^2+\gamma'(t)\frac{C(G(0),\rho_M,s)2^{m+2}(2s+3)}{m\rho_M^{m-1}})\int_{t}^{t+1}\hskip-8pt\int_{B_{R_0+2}}(\zeta|\zeta_t|+|\nabla\zeta|^2+\zeta^2)|\nabla\rho|^{2s+2}dxd\sigma\\
\leq& (32\rho_M^2+\frac{2C(G(0),\rho_M,s)}{C(\rho_M,s)})\int_{t}^{t+1}\hskip-8pt\int_{B_{R_0+2}}(\zeta|\zeta_t|+|\nabla\zeta|^2+\zeta^2)|\nabla\rho|^{2s+2}dxd\sigma,
\end{aligned}
\end{equation*}
so \eqref{res2} holds if we  choose suitable test function $\zeta$ and make $s$-step iterations. The proof is completed.
\end{proof}

Let $u:=\rho_M-\rho$, then $u$ satisfies the following equation
\begin{equation}\label{ude}
\partial_tu=\nabla\cdot(m\rho^{m-1}\nabla u)-\rho G(P).
\end{equation}
In the following, we use De Giorgi's iteration technique  to obtain the $L^\infty$-estimate of the density gradient $\nabla \rho$ in a cylindrical region.
\begin{lemma}\label{lip}
Under the initial assumptions of Lemma~\ref{l2est}, let $u:=\rho_M-\rho$ be the weak solution solution to the Cauchy problem for~\eqref{ude}. Then, for any piecewise smooth test function $\zeta$ satisfying
\begin{equation*}
0\leq\zeta\leq1 \text{ and }supp(\zeta)\subset B_{r}(x_0)\times(t,t+1) \text{ with } |B_r(x_0)|\leq |B_1|\text{ and } B_r\subset B_{R_0+1}\quad\text{for } t  \geq T_2,
\end{equation*}
there exist a universal constant $C>0$ such that, it holds for $k=1,...,n$ that
\begin{equation}\label{lipres1}
\begin{aligned}
\sup\limits_{s\in[t,t+1]}&\|(u_{x_k}-l)_+\zeta\|_{L^2(B_r)}^2+\int_{t}^{t+1}\hskip-8pt\int_{B_r}|\nabla(u_{x_k}-l)_+|^2\zeta^2dxds\\
\leq& C\int_{t}^{t+1}\hskip-8pt\int_{B_r}(u_{x_k}-l)_+^2(|\nabla\zeta|^2+\zeta|\zeta_t|)dxds+\gamma_1(t)(\int_{t}^{t+1}\hskip-8pt(\int_{A_{l}(s)}\zeta(x,s)dx)^{\frac{r}{q}}ds)^{\frac{2(1+\chi)}{r}},
\end{aligned}
\end{equation}
where $r=q=2+\frac{4}{n}$ satisfies the relationship $\frac{1}{r}+\frac{n}{2q}=\frac{n}{4}$, $A_{l}(s):=B_r\cap \{u(\cdot,s)>l\}$ for $s\in(t,t+1)$, $\chi=\frac{1}{n}$, and $\gamma_1(t)\to0$ as $t\to\infty$. Furthermore, let $\gamma_2(t):=2\sqrt{\gamma_1(t)}$, then there exists $T_3\geq T_2$ such that 
\begin{equation}\label{lipres2}
\|\nabla \rho\|_{L^\infty(B_{R_0+\frac{1}{2}}\times[t+\frac{1}{2},t+1])}\leq \gamma_2(t)\quad\text{for }t\geq T_3.
\end{equation}
\end{lemma}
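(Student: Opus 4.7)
The plan has two stages: first derive the Caccioppoli-type energy estimate \eqref{lipres1} by differentiating equation \eqref{ude} in $x_k$ and testing against $(u_{x_k}-l)_+\zeta^2$, exploiting the $L^2$ and higher-integrability smallness supplied by Lemma~\ref{l2est}; second, feed \eqref{lipres1} into a standard parabolic De Giorgi iteration on shrinking cylinders and levels to obtain the pointwise bound \eqref{lipres2}.

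For the first stage, differentiating \eqref{ude} in $x_k$ and using $\rho_{x_k}=-u_{x_k}$ yields
\begin{equation*}
\partial_t u_{x_k}=\nabla\cdot\bigl(m\rho^{m-1}\nabla u_{x_k}\bigr)-\nabla\cdot\bigl(m(m-1)\rho^{m-2}u_{x_k}\nabla u\bigr)-\partial_{x_k}\bigl(\rho G(P)\bigr).
\end{equation*}
I multiply by $(u_{x_k}-l)_+\zeta^2$, integrate over $B_r\times(t,s)$ for $s\in(t,t+1)$, and integrate by parts. The time derivative generates $\tfrac12\|(u_{x_k}-l)_+\zeta(\cdot,s)\|_{L^2(B_r)}^2$ plus a $\zeta\zeta_t$ error; the principal diffusion produces $\iint m\rho^{m-1}|\nabla(u_{x_k}-l)_+|^2\zeta^2\,dxds$, which is genuinely coercive since $\rho\geq\rho_M/2$ on the support of $\zeta$ by \eqref{res1}. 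The nonlinear obstruction is the bilinear term
\begin{equation*}
\mathcal{B}:=\iint m(m-1)\rho^{m-2}u_{x_k}\nabla u\cdot\nabla\bigl((u_{x_k}-l)_+\zeta^2\bigr)\,dxds,
\end{equation*}
which I expand using $u_{x_k}=(u_{x_k}-l)_++l$ on $\{u_{x_k}>l\}$. Young's inequality absorbs the $|\nabla(u_{x_k}-l)_+|^2\zeta^2$ contribution into the coercive term, leaving remainders of type $\iint(u_{x_k}-l)_+^2|\nabla u|^2\zeta^2$ and $\iint l^2|\nabla u|^2\zeta^2$ supported on $A_l$; the source $\partial_{x_k}(\rho G(P))$ generates further $|\nabla u|$-terms treated analogously. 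Pairing $|\nabla u|$ in $L^{2(s+1)}$ (bounded by a small constant via \eqref{res2}-\eqref{res3}) against $(u_{x_k}-l)_+\zeta$ in a complementary Lebesgue norm, and estimating the latter by the parabolic embedding $L^\infty_tL^2_x\cap L^2_tH^1_x\hookrightarrow L^r_tL^q_x$ with $r=q=2+\tfrac{4}{n}$, produces \eqref{lipres1} with $\gamma_1(t)$ equal to a positive power of $\gamma(t)$; in particular $\gamma_1(t)\to 0$ as $t\to\infty$.

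For the second stage, fix $(x_0,t_0)$ with $B_1(x_0)\subset B_{R_0+1}$ and $t_0\geq T_2$. I introduce shrinking cylinders $Q_j:=B_{r_j}(x_0)\times(t_j,t_0+1)$ with $r_j:=\tfrac12+2^{-j-1}$, $t_j:=t_0+\tfrac12-2^{-j-2}$, cutoffs $\zeta_j\equiv 1$ on $Q_{j+1}$ with $|\nabla\zeta_j|^2+\zeta_j|\partial_t\zeta_j|\leq C\,4^j$, and levels $l_j:=L(1-2^{-j})$ with $L:=\gamma_2(t)=2\sqrt{\gamma_1(t)}$. Applying \eqref{lipres1} at level $l_j$ on $Q_j$, combined with the parabolic Sobolev embedding and Chebyshev's inequality $|A_{l_{j+1}}(s)|\leq (L\,2^{-j-1})^{-2}\int(u_{x_k}-l_j)_+^2\,dx$, yields a nonlinear recursion
\begin{equation*}
Y_{j+1}\leq C\,b^{j}\,Y_j^{1+\chi'},\qquad Y_j:=\|(u_{x_k}-l_j)_+\zeta_j\|_{L^{2(1+\chi)}(Q_j)},
\end{equation*}
for some $\chi'>0$ and universal $b,C>0$. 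The standard De Giorgi lemma gives $Y_j\to 0$ once $Y_0$ is below a universal threshold, and since $Y_0\leq C\sqrt{\gamma_1(t)}\,|B_1|^{1/2}$ by construction of $L$, this smallness holds for all $t\geq T_3$ with $T_3$ large enough, forcing $u_{x_k}\leq L=\gamma_2(t)$ a.e.\ on $B_{R_0+1/2}\times[t+1/2,t+1]$. The matching lower bound $u_{x_k}\geq-\gamma_2(t)$ follows by applying the same argument to the symmetric truncation $(-u_{x_k}-l)_+$, since the equation for $u_{x_k}$ has the required sign-invariance.

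The main obstacle is the bilinear term $\mathcal{B}$: a crude Young's inequality leaves a residual $|\nabla u|^2(u_{x_k}-l)_+^2$ that cannot be absorbed unless one exploits the higher-integrability smallness of $\nabla\rho$ from Lemma~\ref{l2est}. The H\"older exponents in the splitting must be tuned so that the resulting measure-of-level-set factor carries exactly the exponent $2(1+\chi)/r$ with $\chi=1/n$; this is what makes the De Giorgi iteration close in the second stage, and it is the delicate bookkeeping that determines the form of $\gamma_1(t)$.
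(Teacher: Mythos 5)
Your two-stage plan (Caccioppoli energy inequality feeding a parabolic De Giorgi iteration) matches the paper's strategy, and your explicit iteration in Stage~2 is a valid unpacking of what the paper obtains in one line by citing \cite[Ch.~III, Lemma~6.1]{LSU1988} for solutions in the De Giorgi class. The substantive difference is in Stage~1, and it matters for whether the estimate actually comes out in the form \eqref{lipres1}. For the bilinear term $\mathcal{B}$ you propose expanding $u_{x_k}=(u_{x_k}-l)_++l$ and then pairing $|\nabla u|$ against $(u_{x_k}-l)_+\zeta$ in complementary Lebesgue norms, closing via the parabolic embedding $L^\infty_tL^2_x\cap L^2_tH^1_x\hookrightarrow L^r_tL^q_x$. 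That route forces an absorption of the embedded norm back into the left-hand side of \eqref{lipres1}, and after the absorption you would be left with a smaller left side rather than with the level-set factor $\gamma_1(t)\bigl(\int(\int_{A_l(s)}\zeta\,dx)^{r/q}ds\bigr)^{2(1+\chi)/r}$; getting that De Giorgi-class structure out of an absorption argument requires an extra step you haven't supplied. The paper avoids this entirely: since $u_{x_k}^2\le|\nabla u|^2$, Young's inequality on $\mathcal{B}$ absorbs $\tfrac14\int m\rho^{m-1}|\nabla(u_{x_k}-l)_+|^2\zeta^2$ and leaves a residual $\int_{A_l}|\nabla u|^4\zeta^2$ with no $(u_{x_k}-l)_+$ factor at all, which is then bounded by a single H\"older against $\zeta^2$, producing $\gamma_4(t)(\int(\int_{A_l(s)}\zeta\,dx)^{r/q}ds)^{2(1+\chi)/r}$ directly, with $\gamma_4$ small by the higher integrability \eqref{res2}. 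Similarly, the reaction source $\partial_{x_k}(\rho G(P))$ does not generate ``further $|\nabla u|$-terms'': after one integration by parts it contributes $\rho G(P)$ against derivatives of the test pair, and Young's inequality yields residuals proportional to $G^2(P)$ on $A_l$, which become small because $G(P)\to G(P_M)=0$; that is the origin of $\gamma_3(t)$ in the paper and is a genuinely different mechanism from the $\nabla u$ control. So the broad outline is sound, but your Stage~1 as described would need a correction (use $u_{x_k}^2\le|\nabla u|^2$ and H\"older against $\zeta^2$, not a pairing against $(u_{x_k}-l)_+\zeta$ and parabolic embedding) to actually arrive at \eqref{lipres1} in the stated De Giorgi-class form.
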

\begin{proof}
Let $u_k:=u_{x_k}:=\partial_{x_k}u$, we take spatial partial derivative $\partial_{x_k}$ for $k=1,...,n$ action on~\eqref{ude} and obtain
\begin{equation*}
\partial_t u_k=\nabla\cdot(m\rho^{m-1}\nabla u_k)-\nabla\cdot(m(m-1)\rho^{m-2}u_k\nabla u)-(\rho G(P))_{x_k}.
\end{equation*}
Multiplying above equation by $(u_k-l)_+\zeta^2$ and integrating on $B_{r}$, it holds
\begin{equation}\label{lip11}
\begin{aligned}
\frac{1}{2}\frac{d}{dt}&\int_{B_r}(u_k-l)_+^2\zeta^2dx-\int_{B_r}(u_k-l)_+^2\zeta\zeta_tdx\\
=&-\int_{B_r}m\rho^{m-1}|\nabla(u_k-l)_+|^2\zeta^2dx-2\int_{B_{r}}m\rho^{m-1}\nabla(u_k-l)_+\cdot\nabla\zeta (u_k-l)_+\zeta dx\\
&-\int_{B_r}m(m-1)\rho^{m-2}u_k\nabla u\cdot\nabla (u_k-l)_+\zeta^2dx+\int_{B_r}\rho G(P) (u_k-l)_{+x_k}\zeta^2dx\\
&-2\int_{B_r}m(m-1)\rho^{m-2}u_k\nabla u\cdot\nabla\zeta (u_k-l)_+\zeta dx+2\int_{B_r}\rho G(P)\zeta_{x_k} (u_k-l)_+\zeta dx.
\end{aligned}
\end{equation}
By means of direct computations and H\"older's inequality, we have
\begin{equation}\label{lip22}
\begin{aligned}
-2\int_{B_{r}}&m\rho^{m-1}\nabla(u_k-l)_+\cdot\nabla\zeta (u_k-l)_+\zeta dx\\
\leq& \frac{1}{4}\int_{B_r}m\rho^{m-1}|\nabla(u_k-l)_+|^2\zeta^2dx+4\int_{B_r}m\rho^{m-1}(u_k-l)_+^2|\nabla\zeta|^2dx,
\end{aligned}
\end{equation}
\begin{equation}\label{lip3}
\begin{aligned}
-\int_{B_r}&m(m-1)\rho^{m-2}u_k\nabla u\cdot\nabla (u_k-l)_+\zeta^2dx\\
\leq& \frac{1}{4}\int_{B_r}m\rho^{m-1}|\nabla(u_k-l)_+|^2\zeta^2dx+\int_{A_l(t)}m(m-1)^2\rho^{m-3}|\nabla u|^4\zeta^2dx,
\end{aligned}
\end{equation}
\begin{equation}\label{lip4}
\begin{aligned}
-2\int_{B_r}&m(m-1)\rho^{m-2}u_k\nabla u\cdot\nabla\zeta (u_k-l)_+\zeta dx\\
\leq& \int_{B_r}m^2(m-1)^2\rho^{2m-2}|\nabla u|^4\zeta^2dx+\int_{B_r}(u_k-l)_+^2|\nabla\zeta|^2dx,
\end{aligned}
\end{equation}
\begin{equation}\label{lip5}
\int_{B_r}\rho G(P) (u_k-l)_{+x_k}\zeta^2dx\leq \frac{1}{4}\int_{B_r}m\rho^{m-1}|\nabla(u_k-l)_+|^2\zeta^2dx+\int_{A_l(t)}\frac{\rho^2G^2(P)\zeta^2}{m\rho^{m-1}}dx,
\end{equation}
and 
\begin{equation}\label{lip6}
2\int_{B_r}\rho G(P)\zeta_{x_k}(u_k-l)_+\zeta dx\leq \int_{B_r}(u_k-l)_+^2|\nabla\zeta|^2dx+\int_{A_l(t)}\rho^2G^{2}(P)\zeta^2dx.
\end{equation}
Taking \eqref{lip22}-\eqref{lip6} into~\eqref{lip11}, we obtain
\begin{equation}\label{lip7}
\begin{aligned}
\frac{1}{2}\frac{d}{dt}\int_{B_r}&(u_k-l)_+^2\zeta^2dx+\frac{1}{4}\int_{B_r}m\rho^{m-1}|\nabla(u_k-l)_+|^2\zeta^2dx\\
\leq&(2+4m\rho_M^{m-1})\int_{B_r}(u_k-l)_+^2(|\nabla\zeta|^2+\zeta|\zeta_t|)dx\\
&+2\|G(P(t))\|_{L^\infty(B_{R_0+1})}^2\max\{\frac{\rho_M^22^{m-1}}{m\rho_M^{m-1}},\rho_M^2\}\int_{A_l(t)}\zeta^2dx\\
&+2 m(m-1)^2\max\{4\rho_M^{m-3},m\rho_M^{2m-2}\}\int_{A_l(t)}|\nabla u|^4\zeta^2dx.
\end{aligned}
\end{equation}
We integrate~\eqref{lip7} on $[t+\frac{1}{4},t+1]$ and get
\begin{equation}\label{lip8}
\begin{aligned}
\sup\limits_{t\in[t+\frac{1}{4},t+1]}&\|(u_k-l)_+\zeta\|_{L^2(B_{r})}^2+\|\nabla(u_k-l)_+\zeta\|_{L^2(B_r\times[t,t+1])}^2\\
\leq& C(\rho_M,m)\int_{t+\frac{1}{4}}^{t+1}\hskip-6pt\int_{B_r}(u_k-l)_+^2(|\nabla\zeta|^2+\zeta|\zeta_t|)dxds+\gamma_3(t)\int_{t+\frac{1}{4}}^{t+1}\hskip-6pt\int_{A_l(s)}\zeta^2dxds\\
&+C(\rho_M,m)\int_{t+\frac{1}{4}}^{t+1}\hskip-6pt\int_{A_l(s)}|\nabla u|^4\zeta^2dxds,
\end{aligned}
\end{equation}
where $\gamma_3(t):=2\|G(P)\|_{L^\infty(B_{R_0+1}\times[t,t+1])}^2\max\{\frac{\rho_M^22^{m-1}}{m\rho_M^{m-1}},\rho_M^2\}\to0$ as $t\to\infty$.

Using H\"older's inequality again,  we find
\begin{equation}\label{lip9}
\begin{aligned}
\gamma_3(t)\int_{t+\frac{1}{4}}^{t+1}\hskip-6pt\int_{A_l(s)}\zeta^2dxds&\leq \gamma_3(t)|B_r\times[t+\frac{1}{4},t+1]|^{\frac{1}{n+2}}\big(\int_{t+\frac{1}{4}}^{t+1}\hskip-6pt\int_{A_l(s)}\zeta^{\frac{2(n+2)}{n+1}}dxds\big)^{\frac{n+1}{n+2}}\\
&\leq \gamma_3(t)|B_r\times[t+\frac{1}{4},t+1]|^{\frac{1}{n+2}}\big(\int_{t+\frac{1}{4}}^{t+1}\hskip-6pt(\int_{A_l(s)}\zeta dx)^{\frac{r}{q}}ds\big)^{\frac{2(1+\chi)}{r}},
\end{aligned}
\end{equation}
\begin{equation}\label{lip10}
\begin{aligned}
C(\rho_M,m)&\int_{t+\frac{1}{4}}^{t+1}\hskip-6pt\int_{A_l(s)}|\nabla u|^4\zeta^2dxds\\
\leq& C(\rho_M,m)\big(\int_{t+\frac{1}{4}}^{t+1}\hskip-6pt\int_{B_r}|\nabla u|^{4n+8}dxds\big)^{\frac{1}{n+2}}\big(\int_{t+\frac{1}{4}}^{t+1}\hskip-6pt\int_{A_l(s)}\zeta^{\frac{2(n+2)}{n+1}}dxds\big)^{\frac{n+1}{n+2}}\\
\leq&\gamma_4(t)\big(\int_{t+\frac{1}{4}}^{t+1}\hskip-6pt(\int_{A_l(s)}\zeta dx)^{\frac{r}{q}}ds\big)^{\frac{2(1+\chi)}{r}},
\end{aligned}
\end{equation}
where $r=q=2+\frac{4}{n}$ satisfies $\frac{1}{r}+\frac{n}{2q}=\frac{n}{4}$ and $\chi=\frac{1}{n}$, and $\gamma_4(t):=C(\rho_M,m)\big(\int_{t+\frac{1}{4}}^{t+1}\int_{B_r}|\nabla u|^{4n+8}dxds\big)^{\frac{1}{n+2}}$ $\to0$ as $t\to\infty$ thanks to~\eqref{res3}-\eqref{res2}.

Inserting~\eqref{lip9}-\eqref{lip10} into~\eqref{lip8}, and set $\gamma_1(t):=\max\{\gamma_3(t)|B_r\times[t,t+1]|^{\frac{1}{n+2}},\gamma_4(t)\}$, then $\gamma_1(t)\to0$ as $t\to\infty$ and \eqref{lipres1} holds. On account of~\cite[Chapter \uppercase\expandafter{\romannumeral3}, Lemma 6.1]{LSU1988}, there exists $T_3\geq T_2$ such that, if we set $l=\sqrt{\gamma_1(t)}(2-\frac{1}{2^N})$ for $N\in\mathbb{N}$, it follows from~\eqref{lipres1} that
\begin{equation*}
\|\nabla \rho\|_{L^\infty(B_{\frac{r}{2}}\times[t+\frac{1}{2},t+1])}\leq 2\sqrt{\gamma_1(t)},\quad t\geq T_3,
\end{equation*}
so~\eqref{lipres2} holds for $t\geq T_3$. The proof is completed.
\end{proof}

\begin{remark} Indeed,  based on the time asymptotic behaviors of Theorme~\ref{t1}, we can investigate the time convergence rate of Lipschitz constant of the density (or the pressure) in any fixed ball when the time is large. From Lemma~\ref{lip}, we know, for any $R_0>0$, $$\|\nabla P(t)\|_{L^\infty(B_{R_0})}\sim\|\nabla \rho(t)\|_{L^\infty(B_{R_0})}\leq \sqrt{\gamma_1(t)}$$  with  \begin{equation}\label{aaa}\gamma_1(t):=\max\{\gamma_3(t)|B_r\times[t,t+1]|^{\frac{1}{n+2}},\gamma_4(t)\},\quad t\gg1.\end{equation} In the proof process of Lemma~\ref{lip} and Lemma~\ref{l2est}, one can learn 
$$\gamma_4(t):=C(\rho_M,m)\big(\int_{t+\frac{1}{4}}^{t+1}\int_{B_r}|\nabla u|^{4n+8}dxds\big)^{\frac{1}{n+2}}\leq [\gamma(t)]^{\frac{1}{n+2}}$$ with $$ \gamma(t):=\|\rho(t+1)-\rho(t)\|_{L^\infty(B_{R_0+2})}
 +\frac{2^{m-1}|B_{R_0+2}|}{m(m-1)\rho_M^{m-3}}\log \big[\frac{t+1}{t}\big],\quad t\gg1,$$ and $$\gamma_3(t):=2\|G(P)\|_{L^\infty(B_{R_0+1}\times[t,t+1])}^2\max\{\frac{\rho_M^22^{m-1}}{m\rho_M^{m-1}},\rho_M^2\},\quad t\gg1.$$
By means of the time convergence rates of Theorem~\ref{t1} for both $G(P):=P_M-P$ from the tumor growth model~\cite{PERTHAME2014} and $g(\rho)=1-\rho$  from the Fisher-KPP equation with $m\geq2$~\cite{DQZ2020}, we have 
$\|\rho(t)-\rho_M\|_{L^\infty(B(R_0+2))}\leq C(1+t)^{-1}$ for $t\gg1$, which yields
$$\gamma_3(t)\leq C(1+t)^{-2}\text{ and }\gamma_4(t)\leq [\gamma(t)]^{\frac{1}{n+2}}\leq C(1+t)^{-1}+C[\log(1+\frac{1}{t})]^{\frac{1}{n+2}}\leq C(1+t)^{-\frac{1}{n+2}},\quad t\gg1.$$ Hence, inserting the above two results on the decay rate into \eqref{aaa}, there exists a constant $C>0$ depends on $P_M,\alpha_0,\beta_0,m,d_G,R_0$ as in Theorem \ref{t1} such that 
$$\|\nabla \rho(t)\|_{L^\infty(B_{R_0})}\leq C(1+t)^{-\frac{1}{2(n+2)}},\quad t\gg 1.$$
 
\end{remark}

In the next two lemmas,  we  will show Lipschitz's continuity of the pressure for the Cauchy problem~\eqref{de} after some time $T_0>0$.
\begin{lemma}\label{lip1}
Let $\rho$ be the weak solution to the Cauchy problem~\eqref{de}, then there exists $T_4>T_3$ such that $P$ is Lipschitz continuous in $\mathbb{R}^n\times(T_4+\delta,\infty)$ for any $\delta\in(0,1)$. More precisely, it holds for some constant $C>0$ depending on $G(0),\delta,T_4,m$ that
\begin{align}
&|\nabla P(x,t)|\leq C\max\{1,|x|\},\label{reslip1}\\
&|\partial_t P(x,t)|\leq C\max\{1,|x|^2\}\label{reslip2},\quad (x,t)\in\mathbb{R}^n\times(T_4+\delta,\infty).
\end{align}
\end{lemma}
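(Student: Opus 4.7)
My approach combines the modified scaling sub-solution of Lemma~\ref{subs2} with cone monotonicity (Lemma~\ref{conemon}) and the Aronson--B\'enilan estimate (Lemma~\ref{lAB}), adapting the classical Caffarelli--V\'azquez--Wolanski scheme~\cite{CVW1987} to the reaction term. The plan is first to choose $T_4 \geq T_3$ large enough that, for every $t_0 \geq T_4$ and every $\epsilon \in (0, 1/(2R_0)]$, the initial ordering
\[
\frac{1}{1+\epsilon} v((1+\epsilon)x, t_0) \leq v(x, t_0), \qquad x\in\mathbb{R}^n,
\]
holds; rewriting via $v = e^{-(m-1)G(0)t}P$, this is equivalent to $P((1+\epsilon)x, t_0) \leq (1+\epsilon)P(x,t_0)$. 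For $|x| \geq R_0$, cone monotonicity applied with $x_0 = x$ and $x_1 = (1+\epsilon)x$ gives directly $P((1+\epsilon)x, t_0) \leq P(x,t_0)$, since $\cos\angle(x_1-x_0,x_0) = 1 \geq R_0/|x_0|$. For $|x| \leq R_0$, note that $(1+\epsilon)x \in B_{R_0+1/2}$, where Lemma~\ref{lip} bounds $|\nabla P(\cdot,t_0)| \leq C\gamma_2(t_0)$ and Proposition~\ref{lta} gives $P(\cdot,t_0) \geq P_M/2$; choosing $T_4$ so that $C\gamma_2(t_0)R_0 \leq P_M/2$ closes this case.

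\textbf{Propagation and scaling identity.} With the initial ordering in hand, Lemma~\ref{subs2} together with the comparison principle for $\mathcal{L}_1$ propagates it forward:
\[
\frac{1}{1+\epsilon} v((1+\epsilon)x, \beta(\epsilon,t)+t_0) \leq v(x, t+t_0), \qquad (x,t)\in\mathbb{R}^n\times[0,\infty).
\]
Rewriting in terms of $P$ and differentiating at $\epsilon = 0^+$, using $\beta(0,t) = t$ and $a(t) := \partial_\epsilon\beta(0,t) = (1 - e^{-(m-1)G(0)t})/((m-1)G(0))$, produces the fundamental inequality
\[
x\cdot\nabla P(x,\tau) + a(\tau - t_0)\,\partial_t P(x,\tau) \leq \bigl(2 - e^{-(m-1)G(0)(\tau-t_0)}\bigr)P(x,\tau) \leq 2P_M.
\]
For any given $\tau \geq T_4 + \delta$, selecting $t_0 = \tau - \delta$ keeps $a(\delta) > 0$ fixed and reduces this to $x\cdot\nabla P + a(\delta)\partial_t P \leq 2P_M$ at $(x,\tau)$.

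\textbf{Extracting two-sided bounds.} The Aronson--B\'enilan estimate combined with the pressure equation yields $\partial_t P \geq |\nabla P|^2 - P/t$, so $\partial_t P \geq -P_M/(T_4+\delta)$ and $|\nabla P|^2 \leq \partial_t P + P_M/(T_4+\delta)$. Substituting the lower bound on $\partial_t P$ into the scaling identity gives $x\cdot\nabla P \leq C_\star$ for a fixed constant. For $|x| \geq R_0$, cone monotonicity forces $\nabla_{\hat x}P \leq 0$, so setting $y := -\nabla_{\hat x}P \geq 0$, combining $a(\delta)\partial_t P \leq 2P_M + |x|y$ with $y^2 \leq \partial_t P + P_M/(T_4+\delta)$ yields a quadratic inequality $a(\delta) y^2 - |x| y - C'' \leq 0$, whence $y \leq C(1+|x|)$. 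The tangential component is then controlled by applying cone monotonicity at the boundary of the cone: for any unit $e \perp \hat x$, the vectors $\mu_\pm := (R_0/|x|)\hat x \pm (\sqrt{|x|^2-R_0^2}/|x|)\,e$ satisfy $\cos\angle(\mu_\pm,\hat x) = R_0/|x|$, so $\nabla_{\mu_\pm}P \leq 0$, and consequently $|\nabla_e P| \leq (R_0/\sqrt{|x|^2-R_0^2})\,|\nabla_{\hat x}P|$. This gives $|\nabla P| \leq C(1+|x|)$ on $\{|x| \geq R_0 + 1\}$; for $|x| \leq R_0 + 1$, Lemma~\ref{lip} together with $\nabla P = m\rho^{m-2}\nabla\rho$ handles the case directly. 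Feeding $|\nabla P| \leq C\max\{1,|x|\}$ back into the scaling identity produces $a(\delta)\partial_t P \leq 2P_M + |x|\cdot C(1+|x|) \leq C\max\{1,|x|^2\}$, while Aronson--B\'enilan supplies the matching $O(1)$ lower bound, proving \eqref{reslip1} and \eqref{reslip2}.

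\textbf{Main obstacle.} The most subtle step is the verification of the initial comparison at $t_0$: one must dovetail cone monotonicity (valid only on $\{|x| \geq R_0\}$) with the local De~Giorgi--Lipschitz bound of Lemma~\ref{lip} (valid only on $B_{R_0+1/2}$) by restricting $\epsilon \leq 1/(2R_0)$, which is harmless after letting $\epsilon \to 0^+$, and then by forcing $\gamma_2(t_0)$ small enough to beat the fixed lower bound $P_M/2$ of the interior pressure. A secondary technicality is the comparison principle for the non-standard operator $\mathcal{L}_1$ with time-dependent exponential coefficients, but it follows from the monotonicity structure of $G$ by the same argument as in the proof of Lemma~\ref{subs2}.
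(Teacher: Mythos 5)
Your proof follows the same overall route as the paper's: establish the initial ordering $\frac{1}{1+\epsilon}v((1+\epsilon)x,t_0) \leq v(x,t_0)$ by combining cone monotonicity for $|x| \geq R_0$ with the local Lipschitz bound of Lemma~\ref{lip} and local positivity for $|x| \leq R_0$; propagate it via Lemma~\ref{subs2} and the comparison principle; differentiate at $\epsilon = 0^+$ to obtain a linear relation among $x\cdot\nabla P$, $\partial_t P$ and $P$; and close the argument via the Aronson--B\'enilan estimate and the pressure equation. The one place you deviate is in extracting the gradient bound. You first bound the radial derivative $y=-\nabla_{\hat x}P$ via a quadratic inequality and then control the tangential components by applying cone monotonicity with the cone-boundary directions $\mu_\pm$. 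The paper avoids this radial/tangential split entirely: from $x\cdot\nabla P + a\,\partial_t P \leq CP$ it passes via $-x\cdot\nabla P \leq |x||\nabla P|$ to $a\,\partial_t P \leq CP + |x||\nabla P|$, and combined with $|\nabla P|^2 \leq \partial_t P + P/t$ this already produces a quadratic inequality in $|\nabla P|$ itself, giving $|\nabla P|\leq C\max\{1,|x|\}$ with no separate treatment of the radial and tangential parts. Your route works but is unnecessarily elaborate, and it forces a cut at $|x|=R_0+1$ that overshoots the $B_{R_0+1/2}$ domain of Lemma~\ref{lip} (fixable by moving the cut to $R_0+\tfrac12$). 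Incidentally, your coefficient $2-e^{-(m-1)G(0)(\tau-t_0)}$ in the scaling identity is the correct one; the paper's displayed formula preceding \eqref{tdepsi} carries an extraneous factor of $1/[(m-1)G(0)]$ on the $P$ term, a harmless algebra slip since both coefficients are $O(1)$ once $\tau-t_0\geq\delta>0$.
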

\begin{proof}
Set $v(x,t):=e^{-(m-1)G(0)t}P(x,t)$, we first claim that there exists $T_4\geq T_3$ such that

\begin{equation}\label{ini2}
\frac{1}{1+\epsilon}v((1+\epsilon)x,t_0)\leq v(x,t_0)\quad\text{ for }x\in\mathbb{R}^n,\ t_0\geq T_4.
\end{equation}
By the Alexander reflection principle (Lemma~\ref{conemon}), it holds for any $|x|\geq R_0$ and $t_0\geq T_0$ that
\begin{equation}\label{init11}
\begin{aligned}
\frac{1}{1+\epsilon}v((1+\epsilon)x,t_0)\leq v((1+\epsilon)x,t_0)\leq v(x,t_0).
\end{aligned}
\end{equation}
For $|x|\leq R_0$ and small $\epsilon>0$, it holds for $t_0\geq T_3$ and by Lemma~\ref{lip} that
\begin{equation}\label{init2}
\begin{aligned}
\frac{1}{1+\epsilon}&v((1+\epsilon)x,t_0)-v(x,t_0)\\
\leq&e^{-(m-1)G(0)t_0}[-\frac{\epsilon}{1+\epsilon}P((1+\epsilon)x,t_0)+P((1+\epsilon)x,t_0)-P(x,t_0)]\\
\leq&e^{-(m-1)G(0)t_0}[-\frac{\epsilon}{2}\frac{m}{m-1}(\frac{\rho_M}{2})^{m-1}+\sup\limits_{|x|\leq R_0+\delta}|\nabla P(x,t_0)|\epsilon R_0]\\
\leq &e^{-(m-1)G(0)t_0}\epsilon[-\frac{1}{2}\frac{m}{m-1}(\frac{\rho_M}{2})^{m-1}+\gamma_2(t_0)2m\rho_{M}^{m-2}R_0].
\end{aligned}
\end{equation}
If we choose a large time $T_4>T_3$ on account of Lemma~\ref{lip} such that
 \begin{equation}\label{init3}
 -\frac{1}{2}\frac{m}{m-1}(\frac{\rho_M}{2})^{m-1}+\gamma_2(t_0)2m\rho_{M}^{m-2}R_0\leq 0\quad\text{ for }t_0\geq T_4.
 \end{equation}
Combining \eqref{init11}-\eqref{init3}, \eqref{ini2} holds.

Thanks to Lemma~\ref{subs2}, it holds by the comparison principle and the definition of $v$ that
\begin{equation*}
e^{-(m-1)G(0)(t_0+\beta(\epsilon,t))}\frac{1}{1+\epsilon}P((1+\epsilon)x,\beta(\epsilon,t)+t_0)\leq e^{-(m-1)G(0)(t+t_0)}P(x,t+t_0)
\end{equation*} for sufficiently small $\delta>0$ and $0\leq \epsilon<\delta$. 
Also because
\begin{equation*}
e^{-(m-1)G(0)(t_0+\beta(\epsilon,t))}\frac{1}{1+\epsilon}P((1+\epsilon)x,\beta(\epsilon,t)+t_0)\big|_{\epsilon=0}= e^{-(m-1)G(0)(t+t_0)}P(x,t+t_0),
\end{equation*}
which means
\begin{equation}\label{td1}
\frac{d}{d\epsilon}\big|_{\epsilon=0}\big[e^{-(m-1)G(0)(t_0+\beta(\epsilon,t))}\frac{1}{1+\epsilon}P((1+\epsilon)x,\beta(\epsilon,t)+t_0)\big]\leq0.
\end{equation}
Thanks to the definition of $\beta(\epsilon,t)$ in Lemma~\ref{subs2}, it holds
\begin{equation}\label{td2}
\frac{d}{d\epsilon}\big|_{\epsilon=0}\beta(\epsilon,t)=\frac{d}{d\epsilon}\big|_{\epsilon=0}\big\{\frac{\log\big[1+(e^{(m-1)G(0)t}-1)(1+\epsilon)\big]}{(m-1)G(0)}\big\}=\frac{1-e^{-(m-1)G(0)t}}{(m-1)G(0)}.
\end{equation}
Combining~\eqref{td1} and~\eqref{td2}, we have
\begin{equation*}
\begin{aligned}
&e^{-(m-1)G(0)(t+t_0)}\big[\frac{e^{-(m-1)G(0)t}-1}{(m-1)G(0)}P(x,t+t_0)-P(x,t+t_0)+\nabla P\cdot x\\
&+\frac{1-e^{-(m-1)G(0)t}}{(m-1)G(0)}\partial_tP(x,t+t_0)\big]\leq 0.
\end{aligned}
\end{equation*}
It holds after rearrangement for $t\geq0$ that
\begin{equation}\label{tdepsi}
\begin{aligned}
&\partial_tP(x,t+t_0)\leq \frac{(m-1)G(0)\big(|\nabla P(x,t+t_0)||x|+P(x,t)\big)}{1-e^{-(m-1)G(0)t}}+P(x,t).
\end{aligned}
\end{equation}

Because of the pressure equation
\begin{equation*}\label{pe1}
\partial_t P=(m-1)P(\Delta P+G(P))+\nabla P\cdot\nabla P
\end{equation*}
 and the Aronson-B\'enilan estimate (Lemma~\ref{lAB})
\begin{equation*}\label{abe}
\Delta P+G(P)\geq -\frac{1}{(m-1)t},
\end{equation*}
we obtain
\begin{equation}\label{peab}
\partial_t P(x,t+t_0)\geq -\frac{P(x,t+t_0)}{t+t_0}+|\nabla P(x,t+t_0)|^2.
\end{equation}
Combining \eqref{tdepsi} and \eqref{peab}, it follows
\begin{equation*}
\begin{aligned}
&|\nabla P(x,t+t_0)|^2-\frac{(m-1)G(0)|x|}{1-e^{-(m-1)G(0)t}}|\nabla P(x,t+t_0)|-P(x,t+t_0)\\
&\times\big(\frac{1}{t+t_0}+1+\frac{G(0)(m-1)}{1-e^{-(m-1)G(0)t}}\big)\leq0.
\end{aligned}
\end{equation*}
Hence, we have
\begin{equation}\label{gradiente}
\begin{aligned}
&|\nabla P(x,t+t_0)|\leq \frac{(m-1)G(0)|x|}{2-2e^{-(m-1)G(0)t}}+\big\{\frac{G^2(0)(m-1)^2|x|^2}{4(1-e^{-(m-1)G(0)t})^2}\\
&+\big[\frac{1}{t+t_0}+1+\frac{G(0)(m-1)} {1-e^{-(m-1)G(0)t}}\big]P(x,t+t_0)\big\}^{\frac{1}{2}}.
\end{aligned}
\end{equation}
From \eqref{peab}, we directly get
\begin{equation}\label{ltl}
\partial_t P(x,t+t_0)\geq -\frac{P(x,t+t_0)}{t+t_0}.
\end{equation}
In addition, combining \eqref{tdepsi} and \eqref{gradiente}, it concludes
\begin{equation}\label{tds}
\begin{aligned}
&\partial_t P(x,t+t_0)\leq P(x,t+t_0)(1+\frac{(m-1)G(0)}{1-e^{-(m-1)G(0)t}})\\
&+\frac{(m-1)^2G^2(0)|x|^2}{2(1-e^{-(m-1)G(0)t})^2}+\frac{(m-1)G(0)|x|}{1-e^{-(m-1)G(0)t}}\times
\big\{\frac{(m-1)^2G^2(0)|x|^2}{4(1-e^{-(m-1)G(0)t})^2}\\
&+\big[\frac{1}{t+t_0}+1+\frac{G(0)(m-1)}{1-e^{-(m-1)G(0)t}}\big]P(x,t+t_0)\big\}^{\frac{1}{2}}.
\end{aligned}
\end{equation}

In one word, \eqref{gradiente} supports~\eqref{reslip1} and~\eqref{ltl}-\eqref{tds} imply that~\eqref{reslip2} holds. The proof  is completed.
\end{proof}
\begin{lemma}\label{lip2} Let $\rho$ be the weak solution to the Cauchy problem \eqref{de} with the additional assumption $G(\cdot)\in \mathcal{C}^1(0,P_H]$. Then $P$ is Lipschitz continuous in $\mathbb{R}^n\times[T_0+\delta, T_3+1]$ for any $\delta>0$. To be more precise, there exists a constant $C>0$ depending on $\delta,G(0),m,P_M,R_0,T_0,T_4$ such that
\begin{equation*}
|\nabla P(x,t)|\leq C\max\{1,|x|\},\quad |\partial_tP(x,t)|\leq C\max\{1,|x|^2\},\quad (x,t)\in\mathbb{R}^n\times[T_0+\delta,T_4+1].
\end{equation*}
\end{lemma}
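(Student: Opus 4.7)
The plan is to adapt the argument of Lemma~\ref{lip1} to the compact time window $[T_0+\delta,T_4+1]$. The only ingredient of that argument that fails on this window is the smallness condition~\eqref{init3} on $\gamma_2(t_0)$, which for $|x|\le R_0$ required $t_0\ge T_4$; I will replace it by classical interior parabolic regularity, which is available precisely because we are on a compact time interval and $P$ is uniformly bounded below in the interior of its support.

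First I would localize. By Lemma~\ref{dgg} and finite-speed containment of the support on the bounded interval $[T_0+\delta,T_4+1]$, one has $\operatorname{supp}(P(\cdot,t))\subset B_{R^*}(0)$ with $R^*=R^*(T_4)$; outside $B_{R^*}$ the claimed bounds are trivial. For $|x|\ge R_0$ the proof of Lemma~\ref{lip1} goes through without any change: cone monotonicity (Lemma~\ref{conemon}) yields
\[
\tfrac{1}{1+\epsilon}v((1+\epsilon)x,t_0)\le v((1+\epsilon)x,t_0)\le v(x,t_0),\qquad t_0\ge T_0,
\]
and then Lemma~\ref{subs2} plus the comparison principle propagate the inequality forward in time; differentiating at $\epsilon=0^+$ reproduces the chain \eqref{tdepsi}--\eqref{tds}, which gives $|\nabla P(x,t)|\le C|x|$ and $|\partial_t P(x,t)|\le C|x|^2$ on $\{|x|\ge R_0\}\times[T_0+\delta,T_4+1]$, with $C=C(\delta,G(0),m,T_0,T_4)$.

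For $|x|\le R_0$ I would abandon the scaling argument and use uniform parabolic regularity. The key observation is that the strict-expansion hypotheses~\eqref{i1}--\eqref{i2} combined with Lemma~\ref{dgg} force the quantitative strict inclusion $\overline{B_{R_0+\eta}}(0)\subset\operatorname{supp}(P(t))$ for every $t\in[T_0+\delta/2,T_4+1]$, with some $\eta=\eta(\delta)>0$; continuity of $P$ on this compact cylinder then delivers a uniform positive lower bound $P(x,t)\ge c_1>0$ on $\overline{B_{R_0+\eta/2}}\times[T_0+\delta/2,T_4+1]$. On this set the pressure equation~\eqref{pe} becomes uniformly parabolic with $C^1$ coefficients (this is exactly where the new hypothesis $G\in\mathcal{C}^1(0,P_H]$ is used), so standard interior Schauder and Krylov--Safonov estimates give uniform $C^{1,\alpha}$-in-$x$ and $C^{(1+\alpha)/2}$-in-$t$ bounds on $B_{R_0}\times[T_0+\delta,T_4+1]$. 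Feeding these bounds back into~\eqref{pe}, together with the Aronson--B\'enilan lower bound on $\Delta P$ from Lemma~\ref{lAB} and the Schauder upper bound on $\Delta P$, controls $\partial_t P$ uniformly on the same set. Both of the resulting interior bounds are trivially majorised by $C\max\{1,|x|\}$ and $C\max\{1,|x|^2\}$, matching the statement.

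The main obstacle is the quantitative strict inclusion $\overline{B_{R_0+\eta}}(0)\subset\operatorname{supp}(P(t))$ for $t\in[T_0+\delta/2,T_4+1]$ with $\eta=\eta(\delta)>0$. Qualitatively this is implied by Lemma~\ref{dgg} and the strict-expansion mechanism of~\cite{ACK1983} encoded in \eqref{i1}--\eqref{i2}, but for the interior lower bound to be quantitative I have to produce $\eta$ explicitly. This I would do by constructing a radial Barenblatt-type sub-solution (modified by the monotone source term, as in Lemma~\ref{subs2}) anchored at time $T_0$ on a ball $B_{R_0}$ where the degenerate bound~\eqref{i2} delivers a lower envelope for $P_0$; evolving it for time $\delta/2$ yields the desired penetration radius $\eta(\delta)>0$ and the lower bound $c_1(\delta)>0$. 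The compactness of $[T_0+\delta/2,T_4+1]$ is essential here and accounts for the dependence of the constant $C$ on $T_0$, $T_4$ and $\delta$ in the statement.
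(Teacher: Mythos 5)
Your decomposition into the regions $\{|x|\le R_0\}$ (handled by interior uniform parabolic regularity) and $\{|x|\ge R_0\}$ (handled by the scaling/comparison machinery) is the same decomposition the paper uses, and the interior part of your argument is essentially sound: strict expansion (Theorem~\ref{vhc}, or the definition of $T_0$) gives $B_{R_{T_0}^\delta}\subset\Omega(t)$ with $R_{T_0}^\delta>R_0$ for $t\ge T_0+\delta/4$, hence a uniform positive lower bound on $P$ over the compact cylinder $B_{R_{T_0}^\delta}\times[T_0+\delta/2,T_4+1]$, and then interior Schauder/De Giorgi estimates for the uniformly parabolic equation (this is where $G\in\mathcal{C}^1$ enters). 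You do not actually need the Barenblatt subsolution construction: compactness plus continuity of $P$ already deliver the existence of $c'>0$, which is all that is used.

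The gap is in the exterior region. You assert that for $|x|\ge R_0$ ``the proof of Lemma~\ref{lip1} goes through without any change,'' but this is precisely what fails. In Lemma~\ref{lip1}, the comparison between $w_\epsilon(x,t):=\frac{1}{1+\epsilon}v((1+\epsilon)x,t_0+\beta(\epsilon,t))$ and $v(x,t_0+t)$ is set up \emph{globally} on $\mathbb{R}^n\times(0,\infty)$, and this requires the initial ordering $w_\epsilon\le v$ at $t=0$ for \emph{all} $x$, including $|x|\le R_0$; the step~\eqref{init2}--\eqref{init3} that supplies this for $|x|\le R_0$ uses the smallness $\gamma_2(t_0)\to 0$, which is available only for $t_0\ge T_4$ and is exactly the hypothesis you are trying to dispense with. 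If instead you try to run the comparison on the exterior domain $\{|x|>R_0\}$ alone, the parabolic boundary now includes the lateral surface $\{|x|=R_0\}\times(0,T)$, and cone monotonicity is not enough there because $w_\epsilon$ is evaluated at the shifted time $t_0+\beta(\epsilon,t)$ with $\beta(\epsilon,t)>t$, so a pure spatial-monotonicity argument cannot close the ordering. The paper handles exactly this: on $\{|x|=R_0\}$ it combines cone monotonicity with the interior bounds $P\ge c'$ and $|\partial_t P|\le C_1$ (which must therefore be established \emph{first}) to get
\[
\tfrac{1}{1+\epsilon}v\big((1+\epsilon)x,t'+\beta(\epsilon,t)\big)-v(x,t'+t)
\le e^{-(m-1)G(0)(t'+t)}\,\epsilon\Big[-\tfrac{c'}{2}+C_1 t\Big]\le 0,
\]
but only on the short time window $0<t\le 2\delta'$ with $\delta':=\min\{\tfrac{c'}{2C_1},\tfrac{\delta}{2}\}$. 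One then covers $[T_0+\delta,T_4+1]$ by sliding $t'$ over $[T_0+\delta/2,T_4+1]$ and always propagating only by $[\delta',2\delta']$. This lateral-boundary estimate and the attendant short-time-stepping are the essential new content of Lemma~\ref{lip2} relative to Lemma~\ref{lip1}, and both are absent from your proposal.
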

\begin{proof}
Set $R_{T_0}^{\delta}:=\sup\{R>0, B_R\subset supp(P(\cdot,T_0+\frac{\delta}{8}))\}>R_0$, thanks to Theorem~\ref{vhc}, $\Omega(t):=\{x: P(\cdot,t)\}$ strictly expands as the time increases. Therefore, $B_{R_{T_0}^\delta}$ is strictly contained in $\Omega(t)$ for $t\in[T_0+\frac{\delta}{4},T_3+1]$.
In other words, there exist $0<c'<C'<\infty$ such that
\begin{equation*}
c'\leq P(x,t)<C'\text{ for }(x,t)\in B_{R_{T_0}^\delta}\times[T_0+\frac{\delta}{2},T_4+1].
\end{equation*}
On the account of interior uniform regularity estimate for the uniform parabolic equation~\eqref{de}, we obtain
\begin{equation*}
|\nabla P|, |\partial_tP|\leq C_1\text{ for }(x,t)\in B_{R_0}\times[T_0+\frac{\delta}{2},T_4+1].
\end{equation*}

Next, we consider the domain $\{x:|x|\geq  R_0\}\times[T_0+\frac{\delta}{2},T_4+1]$ and~$v(x,t):=P(x,t)e^{-(m-1)G(0)t}$. Similar to \eqref{init11} by Alexander reflection principle, it holds for $|x|\geq R_0$ that
\begin{equation*}
\frac{1}{1+\epsilon}v\big((1+\epsilon)x,t'\big)\leq v(x,t')\ \text{ for any }t'\in[T_0+\frac{\delta}{2},T_4+1].
\end{equation*}
On the boundary $\{x: |x|=R_0\}$ (different from Lemma~\ref{lip1}), we have
\begin{equation*}
\begin{aligned}
\frac{1}{1+\epsilon}&v((1+\epsilon)x,t'+\beta(\epsilon,t))-v(x,t'+t)\\
=&e^{-(m-1)G(0)t'}[e^{-(m-1)G(0)\beta(\epsilon,t)}\frac{1}{1+\epsilon}P((1+\epsilon)x,\beta(\epsilon,t)+t')-e^{-(m-1)G(0)t}P(x,t+t')]\\
\leq&e^{-(m-1)G(0)(t'+t)}[-\frac{\epsilon}{1+\epsilon}P((1+\epsilon)x,\beta(\epsilon,t)+t')+P((1+\epsilon)x,\beta(\epsilon,t)+t')\\
&-P(x,\beta(\epsilon,t)+t')+P(x,\beta(\epsilon,t)+t')-P(x,t+t')]\ (\text{cone monocinity})\\
\leq &e^{-(m-1)G(0)(t'+t)}[-\frac{\epsilon c}{2}+P(x,\beta(\epsilon,t)+t')-P(x,t+t')]\\
\leq&e^{-(m-1)G(0)(t'+t)}[-\frac{\epsilon c}{2}+|\partial_t P(x,\beta(\epsilon,t)\theta_1+t')|(\beta(\epsilon,t)-t)]\\
\leq &e^{-(m-1)G(0)(t'+t)}\big[-\frac{\epsilon c}{2}+C_1\epsilon\frac{e^{(m-1)G(0)t}-1}{(m-1)G(0)[e^{(m-1)G(0)t}+(e^{(m-1)G(0)t}-1)\theta_2\epsilon]}\big]\\
\leq&e^{-(m-1)G(0)(t'+t)}\epsilon[-\frac{ c'}{2}+C_1 t]\leq 0
\end{aligned}
\end{equation*} for $0<t\leq2\delta':=\min\{\frac{c'}{2C_1},\frac{\delta}{2}\}$,
in which $\theta_1,\theta_2\in[0,1]$ .

From Lemma~\ref{subs2} and by means of the comparison principle, it holds
\begin{equation*}
\frac{1}{1+\epsilon}v((1+\epsilon)x,t'+\beta(\epsilon,t))\leq v(x,t'+t),\quad |x|\geq R_0.
\end{equation*}
Same arguments as \eqref{gradiente}-\eqref{tds} in Lemma~\ref{lip1}, it concludes
\begin{equation*}
\begin{aligned}
&|\nabla P(x,t+t')|\leq \frac{(m-1)G(0)|x|}{2-2e^{-(m-1)G(0)\delta'}}+\big\{\frac{G^2(0)(m-1)^2|x|^2}{4(1-e^{-(m-1)G(0)\delta'})^2}\\
&+\big[\frac{1}{\delta'+t'}+1+\frac{G(0)(m-1)} {1-e^{-(m-1)G(0)\delta'}}\big]P(x,t+t')\big\}^{\frac{1}{2}}.
\end{aligned}
\end{equation*}
\begin{equation*}
\partial_t P(x,t+t')\geq -\frac{1}{t+t'},
\end{equation*}
and
\begin{equation*}
\begin{aligned}
&\partial_t P(x,t+t')\leq P(x,t+t')(1+\frac{(m-1)G(0)}{1-e^{-(m-1)G(0)\delta'}})\\
&+\frac{(m-1)^2G^2(0)|x|^2}{2(1-e^{-(m-1)G(0)\delta'})^2}+\frac{(m-1)G(0)|x|}{1-e^{-(m-1)G(0)\delta'}}\times
\big\{\frac{(m-1)^2G^2(0)|x|^2}{4(1-e^{-(m-1)G(0)\delta'})^2}\\
&+\big[\frac{1}{\delta'+t'}+1+\frac{G(0)(m-1)}{1-e^{-2(m-1)G(0)\delta'}}\big]P(x,t+t')\big\}^{\frac{1}{2}}.
\end{aligned}
\end{equation*}
for $x\in\mathbb{R}^n$, $\delta'\leq t\leq 2\delta'$, and $t'\in[T_0+\frac{\delta}{2},T_4+1]$. Therefore, there exists a general constant $C>0$ depending on $\delta,G(0),m,R_0,T_0,T_4, c', C'$ such that
\begin{equation*}
|\nabla P(x,t)|\leq C\max\{1,|x|\},\quad |\partial_tP(x,t)|\leq C\max\{1,|x|^2\}\quad \text{for }(x,t)\in\mathbb{R}^n\times[T_0+\delta,T_3+1],
\end{equation*}
and the proof is completed.
\end{proof}
 In the end of this section, we give the proof of Theorem~\ref{t3}.
\begin{proof}[\underline{\textbf{Proof of Theorem~\ref{t3}}}]
By direct observations,  Lemmas~\ref{lip1}-\ref{lip2} support Lipschitz's continuity of the pressure  in Theorem~\ref{t3}.  
\end{proof}
\section{$C^{1,\alpha}$ regularity of free boundary}\label{fbr}

We mainly follow the approach of~\cite{CW1990,KZ2021} to prove the $C^{1,\alpha}$ regularity of the free boundary $\partial\{(x,t): \rho(x,t)>0\}$ for the Cauchy problem~\eqref{de} of the porous medium type reaction-diffusion equation.  The differences lie in the logistic growth effect caused by the Lotka-Volterra source term $\rho g(\rho)$ from~\eqref{de}. 

Some new notations are needful in the rest of this section as follows:
\begin{definition}
\begin{equation*}
\oint_{B_{R}(x_0)}w(x)dx=\frac{1}{|B_{R}(x_0)|}\int_{B_{R}(x_0)}w(x)dx,
\end{equation*}
\begin{equation*}
\Omega:=\{(x,t)\in\mathbb{R}^n\times \mathbb{R}_+: \rho(x,t)>0\},\quad \Omega(t):=\{x\in\mathbb{R}^n: \rho(x,t)>0\},
\end{equation*}
\begin{equation*}
\Gamma:=\partial \Omega,\quad \Gamma(t):=\partial \Omega(t),
\end{equation*}
\begin{equation*}
A_{r}(x,t):=B_{r}(x)\times[t-r,t+r],\quad (x,t)\in\mathbb{R}^n\times \mathbb{R}_+.
\end{equation*}
\begin{definition}
Due to the AB estimate (Lemma~\ref{lAB}), there exists $C_0>0$ such that, for $t\geq \frac{T_0}{4}>0$,
\begin{equation*}
\Delta P+G(P)>-C_0,
\end{equation*} 
where $T_0$ is given by~\eqref{T_0}. If we set $C_0'=C_0+G(0)$, it holds for $t\geq \frac{T_0}{4}>0$ that
\begin{equation*}
\Delta P\geq -C_0'.
\end{equation*}
\end{definition}
\begin{definition}
Let us define the spatial cone of the $\mu$-direction as follows:
\begin{equation*}
W_{\theta,\mu}=\{y\in\mathbb{R}^n\ |\ |\frac{y}{|y|}-\mu|\leq 2\sin\frac{\theta}{2}\}\
\end{equation*}
with any axis $\mu\in\mathcal{S}^{n-1}$ and $\theta\in(0,\frac{\pi}{2}]$. If $P(\cdot,t)$ is non-increasing along directions in $W_{\theta,\mu}$, we say that $P(\cdot,t)$ is monotone with respect to $W_{\theta,\mu}$.
\end{definition}
\end{definition}
\begin{definition}
In the rest of this section, $\sigma>0$ represents the universal constant.
\end{definition}
 \paragraph{\large Fundamental lemmas.}
 Lemma \ref{l18} tells us  a fact that a considerable amount of  population has entered the ball $B_{R}(x_0)$ in the time $t_0+\sigma$
 when there was no population in the ball $B_{R}(x_0)$ at time $t_0$, then the population must reach the ball $B_{R/6}(x_0)$ at time $t_0+\sigma$.
 Lemma \ref{l19} says that a round area congest enough population, then the center of the round area must congest considerable population after a finite time.
 
 The detailed proofs of Lemmas~\ref{l18}-\ref{l19} are almost same as~\cite{CF1980,KZ2021},  we give in Appendix~\ref{app A} for completeness. 
 \begin{lemma}\label{l18}
 For any given $T_0>0$, there exist positive constants $\eta$, $c_0$ depending only on $m$, $n$, $T_0$ such that the following is true: Let $x_0\in\ \mathbb{R}^n$, $t_0\geq T_0$, $R>0$, $0<\sigma<\eta$. If
 \begin{equation*}
 P(x,t_0)\equiv 0,\quad x\in B_{R}(x_0)
 \end{equation*}
 and
 \begin{equation*}
 \oint_{B_{R}(x_0)}P(x,t_0+\sigma)dx\leq \frac{c_0R^2}{\sigma}
 \end{equation*}
 hold, then we have
 \begin{equation*}
 P(x,t_0+\sigma)\equiv 0,\quad x\in B_{R/6}(x_0).
 \end{equation*}
 \end{lemma}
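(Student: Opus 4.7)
The strategy I would pursue is a comparison argument with a carefully constructed radial super-solution (barrier). By translation and time shift we may assume $x_0=0$ and $t_0=0$, and by rescaling in space ($x\mapsto x/R$) and time we may further normalize $R=1$; the reaction term is bounded by $G(0)$ and the lower bound $t_0\ge T_0$ ensures that the shifted problem still satisfies the Aronson--B\'enilan estimate from Lemma~\ref{lAB} uniformly. After this reduction the task becomes: find $\eta,c_0>0$, depending only on $m,n,T_0$, such that vanishing of $P$ on $B_1$ at $t=0$ together with $\oint_{B_1}P(\cdot,\sigma)\le c_0/\sigma$ forces $P(\cdot,\sigma)\equiv 0$ on $B_{1/6}$.

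The plan is to construct a radial super-solution $\bar P(x,t)$ to the pressure equation \eqref{pe} on the cylinder $B_1\times[0,\sigma]$ with the three properties:
\[
\bar P(x,0)\ge 0=P(x,0)\ \text{on}\ B_1,\qquad \bar P\ge P\ \text{on}\ \partial B_1\times[0,\sigma],\qquad \bar P(x,\sigma)\equiv 0\ \text{on}\ B_{1/6}.
\]
If these hold, the comparison principle yields $P\le \bar P$ in $B_1\times[0,\sigma]$ and in particular $P(\cdot,\sigma)\equiv 0$ on $B_{1/6}$. For $\bar P$ I would use an inward-moving Barenblatt-type profile of the form $\bar P(x,t)=A\,(|x|-\varrho(t))_+^{\beta}$ with $\beta=2$ (or a slightly modified power) and $\varrho(t)$ chosen to decrease from $1$ at $t=0$ so that $\varrho(\sigma)=1/6$. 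A direct substitution into \eqref{pe} shows that $\bar P$ is a super-solution provided $|\dot\varrho|\ge c(m,n)\sqrt{A}$, which fixes the relationship between $A$, $\sigma$ and the geometry, and the source term $(m-1)\bar PG(\bar P)$ can be absorbed, as in Lemma~\ref{subs2}, by the substitution $v=e^{-(m-1)G(0)t}P$ (paying only an $e^{(m-1)G(0)\sigma}$ factor, which is bounded once $\sigma<\eta$).

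The quantitative heart of the argument is to translate the averaged smallness hypothesis into a pointwise bound on $\partial B_1\times[0,\sigma]$ controlling the amplitude $A$. Here one uses the directional monotonicity of $P$ provided by Lemma~\ref{conemon} (``Alexandrov reflection'') together with $\partial_tP\ge -P/t$ (a consequence of \eqref{pe} and Lemma~\ref{lAB}), exactly as in Caffarelli--Friedman's original waiting-time lemma for the PME: the first turns the volume average over $B_1$ into a pointwise sup along rays, while the second transfers the bound at time $\sigma$ back to all $t\in[0,\sigma]$. This yields $\sup_{\partial B_1\times[0,\sigma]}P\le C(m,n,T_0)/\sigma$, which, matched against the Barenblatt ansatz, forces $A\lesssim 1/\sigma$ and $|\dot\varrho|\lesssim \sigma^{-1/2}$, so $\varrho(\sigma)\ge 1-C\sqrt{\sigma}$. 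Choosing $\eta$ small enough that $C\sqrt{\eta}\le 1/2$ and $c_0$ small enough that the amplitude condition is met gives $\varrho(\sigma)\ge 1/6$ and \emph{a fortiori} the desired conclusion.

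The main obstacle I anticipate is the boundary-data estimate in the last step, namely turning the integral hypothesis into a pointwise bound on $\partial B_1$. Without the reaction term this is exactly Caffarelli--Friedman, and the extra ingredients here---the exponential transformation $v=e^{-(m-1)G(0)t}P$ to remove the source, the lower bound $t_0\ge T_0$ to make the $1/t$ terms harmless, and cone monotonicity from Lemma~\ref{conemon}---are all available, but the bookkeeping to ensure that the constants $\eta,c_0$ depend only on $m,n,T_0$ (and not on $R$ or the particular $G$) requires the scaling reduction above and careful tracking of the factor $e^{(m-1)G(0)\sigma}\le e^{(m-1)G(0)\eta}$ throughout the barrier construction.
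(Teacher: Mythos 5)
Your overall architecture (translate--rescale, build a radial barrier that sweeps the free boundary inward, verify a boundary and initial comparison, conclude by the comparison principle) is the same one the paper uses, and your handling of the reaction term and of the $1/t$ coming from Aronson--B\'enilan is sound. However, there is a genuine gap in the step you flag as the ``quantitative heart'': converting the averaged hypothesis $\oint_{B_1}\tilde P(\cdot,1)\,dx\le c_0$ into a pointwise upper bound on $B_{1/2}$ (or on $\partial B_{1/2}$). You propose to do this with Lemma~\ref{conemon}, but that cone monotonicity is a \emph{global} property of solutions with initial support in $B_{R_0}(0)$; it only compares values at points $x_0,x_1$ with $|x_0|,|x_1|>R_0$ and in a cone whose axis is the ray from the origin. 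In Lemma~\ref{l18} the center $x_0$ is an arbitrary point of $\mathbb{R}^n$ and $R$ is arbitrary, so there is no reason for $x_0$ to lie outside $B_{R_0}$, and even when it does the geometry of the cone does not align with the translated ball; the reflection argument simply does not localize. The paper's mechanism is different and is the key point you are missing: from the Aronson--B\'enilan estimate one has $\Delta\tilde P\ge-\varepsilon$ everywhere, so $\tilde P+\tfrac{\varepsilon}{2n}|x|^2$ is \emph{subharmonic}, and the sub-mean-value inequality converts the integral average over $B_1$ into a pointwise bound on $B_{1/2}$ at time $t=1$; the one-sided time derivative bound $\partial_t\tilde P\ge-\varepsilon\tilde P$ then propagates that bound backward to all $t\in[0,1]$. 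This works at every $x_0$ and uses nothing global.

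Two smaller points. First, the paper's barrier is the \emph{linear} traveling cone $V(x,t)=\lambda[\tfrac{t}{36}+\tfrac{1}{6}(|x|-\tfrac13)]_+$, which is the natural profile for the pressure (the Barenblatt pressure grows linearly off the interface), whereas your ansatz $A(|x|-\varrho(t))_+^{\,2}$ has the wrong degeneracy at the interface; it may be repaired, but the speed/amplitude balance you state, $|\dot\varrho|\gtrsim\sqrt A$, is not what either the linear or the quadratic profile gives (both lead to $|\dot\varrho|\gtrsim A$ up to constants). Second, the paper does not need the exponential change of unknown $v=e^{-(m-1)G(0)t}P$ inside this lemma: after rescaling, the source enters only through the bounded quantity $\sigma G(0)$, and the super-solution inequality for $V$ is verified directly with that term present; your substitution is not wrong, just an extra layer that the paper's direct computation avoids.
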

\begin{lemma}\label{l19}
For any $t_0\geq T_0$ and $\nu>0$, there exist $\lambda, c, \eta>0$ depending on $\nu, T_0$ and universal constants such that the following holds: For any $R>0$ and $0< \sigma\leq \eta$, if
\begin{equation*}
\oint_{B_R(x_0)}\rho^m(x,t_0)dx\geq \nu\frac{R^2}{\sigma},
\end{equation*}
then it holds
\begin{equation}\label{a14}
\rho^m(x_0,t_0+\lambda\sigma)\geq c\frac{R^2}{\sigma},
\end{equation}
where
\begin{equation}\label{a22}
0<\varepsilon\lambda \leq 1,\ 0<\varepsilon^{\delta}\ll 1,\ 0<c\ll1,\ \lambda^m\nu^{m-1}\gg 1\quad\text{with }\varepsilon=\eta\max\{(m-1)C_0,\ C_0',1\}.
\end{equation}\end{lemma}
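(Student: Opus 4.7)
The plan is to follow the Caffarelli-Friedman barrier strategy carried out in~\cite{CF1980,KZ2021} for the pure PME, modifying the construction so that the reaction term $(m-1)PG(P)$ appears as a lower-order perturbation on the short time interval $[t_0,t_0+\lambda\sigma]$. The essential inputs are the Aronson-B\'enilan estimate (Lemma~\ref{lAB}), which together with $|G|\leq G(0)$ gives $\Delta P\geq -C_0'$ for $t\geq T_0/4$, and the cone monotonicity / support expansion facts of Lemmas~\ref{conemon}-\ref{dgg}.

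First I would exploit the mass hypothesis $\oint_{B_R(x_0)}\rho^m(x,t_0)\,dx\geq \nu R^2/\sigma$ together with the uniform upper bound $\rho\leq \rho_H$ to locate, by a pigeonhole argument, a sub-ball $B_{r_1}(z_1)\subset B_R(x_0)$ with $r_1\simeq R$ on which $\rho^m(\cdot,t_0)$ is bounded below pointwise by a universal multiple of $\nu R^2/\sigma$. Translating into the pressure via $P=\tfrac{m}{m-1}\rho^{m-1}$, this yields $P(\cdot,t_0)\geq c_1(\nu R^2/\sigma)^{(m-1)/m}$ on $B_{r_1}(z_1)$.

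Second, taking $B_{r_1}(z_1)$ as seed, I would place an explicit Barenblatt-type sub-solution of the pressure equation~\eqref{pe},
\begin{equation*}
\underline{P}(x,t)=(t-t_0+\tau)^{-\alpha}\bigl(A-B(t-t_0+\tau)^{-\beta}|x-z_1|^2\bigr)_+,
\end{equation*}
with the standard PME exponents $\alpha,\beta$ and constants $A,B,\tau$ chosen so that $\underline{P}(\cdot,t_0)\leq P(\cdot,t_0)$ on $B_{r_1}(z_1)$. Using $|G(P)|\leq G(0)$ and the boundedness of $P$, the reaction $(m-1)\underline{P}G(\underline{P})$ can be absorbed as a perturbation provided $\varepsilon=\eta\max\{(m-1)C_0,C_0',1\}$ is small and $\varepsilon\lambda\leq 1$, so that $\underline{P}$ remains a sub-solution of the full equation on its positivity set. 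The comparison principle, applied on the support of $\underline{P}$ (which stays inside $\mathrm{supp}\,P$ by finite propagation and Lemma~\ref{dgg}), then gives $P\geq \underline{P}$ throughout $[t_0,t_0+\lambda\sigma]$. Evaluating $\underline{P}$ at $(x_0,t_0+\lambda\sigma)$ and reverting to $\rho^m$ yields the desired lower bound $\rho^m(x_0,t_0+\lambda\sigma)\geq cR^2/\sigma$.

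The main obstacle is the simultaneous juggling of the constants in~\eqref{a22}: $\lambda$ must be large enough that the Barenblatt support swallows $x_0$ (which is exactly where $\lambda^m\nu^{m-1}\gg 1$ enters, since the Barenblatt radius at time $\lambda\sigma$ grows like $(\lambda\sigma)^{1/(n(m-1)+2)}$ times the initial mass to a positive power), yet small enough that $\varepsilon\lambda\leq 1$ keeps the reaction correction under control over a time of length $\lambda\sigma$. My plan is to fix $\lambda=\lambda(\nu,m,n)$ first so that the Barenblatt support covers $x_0$ with room to spare, then choose $\eta$ (hence $\varepsilon$) so small that $\varepsilon\lambda\leq 1$ and the accumulated reaction correction is at most half the Barenblatt tail value at $x_0$, and finally read off $c=c(\nu,m,n)$ from that tail value. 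Verifying that $\underline{P}$ is genuinely a sub-solution of the \emph{perturbed} equation on its support, and that the correction terms from $G$ do not destroy the comparison before time $t_0+\lambda\sigma$, is the technical core of the argument.
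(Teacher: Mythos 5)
Your proposal takes a genuinely different route from the paper. The paper rescales to $\tilde{\rho}(x,t)=(\sigma/R^2)^{1/(m-1)}\rho(Rx,\sigma t)$, works with the integral quantity $\varphi(t)=\oint_{B_1}\tilde\rho^m(x,t)\,dx$, tests the rescaled equation against the Green's function $G_1$ of $B_1$ (as in Caffarelli--Friedman \cite{CF1980}), and derives an integral inequality whose violation under the negation of \eqref{a14} forces the auxiliary function $\Psi(t)=\int_0^t\varphi(s)\,ds$ to satisfy $\Psi'\geq (B\Psi)^m$ and hence to blow up before time $\lambda$, a contradiction; condition \eqref{a22} is exactly what guarantees the blow-up. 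Your plan instead localizes the integral hypothesis to a pointwise lower bound on a sub-ball and then pushes a Barenblatt-type sub-solution forward to $x_0$. Both would (if completed) use the same structural inputs --- the AB estimate, $|G|\leq G(0)$, and the constraint $R^2/\sigma\leq \rho_H^m/\nu$ forced by $\rho\leq\rho_H$ --- and in both the condition $\lambda^m\nu^{m-1}\gg 1$ plays the role of ensuring the mass actually reaches the center by time $t_0+\lambda\sigma$.

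There is, however, a genuine gap in the very first step. From $\oint_{B_R(x_0)}\rho^m(x,t_0)\,dx\geq \nu R^2/\sigma$ and the bound $\rho\leq\rho_H$, the pigeonhole principle only yields a \emph{subset} of $B_R(x_0)$ of positive measure on which $\rho^m(\cdot,t_0)\gtrsim \nu R^2/\sigma$; it does not produce a sub-ball of radius $\simeq R$ with a pointwise lower bound, and the set where $\rho^m$ is large may well be thin, fragmented, or concentrated near $\partial B_R$. Upgrading the measure bound to a pointwise bound on a comparably-sized ball would require a modulus of continuity for $\rho^m(\cdot,t_0)$ that is uniform at the scale $R$, which is not available a priori (and is in fact part of what regularity theory of this type is trying to establish). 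This is precisely the step the Green's-function argument sidesteps: the paper never converts the average into a pointwise lower bound on a ball; it works with integral quantities throughout, and the pointwise value $\tilde\rho^m(0,s)$ enters only as the coefficient of $-\gamma_n\delta(x)$ in $\Delta G_1$. Beyond the localization issue, you also leave the actual verification that the Barenblatt profile remains a sub-solution of \eqref{pe} with the reaction term, uniformly on $[t_0,t_0+\lambda\sigma]$ and uniformly in the scale $R^2/\sigma\in(0,\rho_H^m/\nu]$, as ``the technical core'' rather than carrying it out; as written, the argument is a plan rather than a proof.
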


We show the relation between the free boundary and vertical line, where the vertical line of any $(x_0,t_0)\in \mathbb{R}^n\times (T_0,\infty)$ is defined by
\begin{equation*}
 \sigma(x_0,t_0)=\{(x,t):x=x_0,\ 0\leq t< t_0\}.
\end{equation*}
\begin{theorem}\label{vhc}
For a given point $(x_0,t_0)\in\Gamma$ and $t_0\geq T_0$, the following is true:
\begin{itemize}
\item[$(1)$] either $(\RNum{1})\ \sigma(x_0,t_0)\subset\Gamma$ or $(\RNum{2})\ \sigma(x_0,t_0)\cap\Gamma=\varnothing$ holds.
\item[$(2)$] If $(\RNum{2})$ holds, then there exist positive constants $\hat{C}, \beta, h$ such that for all $s\in(0,h)$
    \begin{align*}
    &\rho(x,t-s)=0\ \text{ if }\ |x-x_0|\leq \hat{C} s^{\beta};\\
    &\rho(x,t+s)>0\ \text{ if }\ |x-x_0|\leq \hat{C} s^{\beta}.
    \end{align*}
\end{itemize}
\end{theorem}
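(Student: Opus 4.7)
My plan is to combine the support monotonicity of Lemma~\ref{dgg} with the two persistence-type estimates in Lemmas~\ref{l18} and~\ref{l19} to handle the two assertions separately.

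\textbf{Part (1) (Dichotomy).} Continuity of $\rho$ together with $(x_0,t_0)\in\Gamma$ forces $\rho(x_0,t_0)=0$, and Lemma~\ref{dgg} then yields $\rho(x_0,t)=0$ for every $t\in[0,t_0]$. So the question is whether each such $(x_0,t)$ lies on $\Gamma$ or in the interior of $\{\rho=0\}$. I would assume for contradiction that there exists $t_1\in[0,t_0)$ with $(x_0,t_1)\notin\Gamma$ and show then that $\sigma(x_0,t_0)\cap\Gamma=\emptyset$, which yields the dichotomy. Since $(x_0,t_1)$ is interior to $\{\rho=0\}$, one finds a cylinder $B_{R_1}(x_0)\times(t_1-r_1,t_1+r_1)$ on which $\rho$ vanishes; Lemma~\ref{dgg} propagates this back in time so that $\rho\equiv 0$ on $B_{R_1}(x_0)\times[0,t_1+r_1/2]$. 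Define
\[T^{\ast}:=\sup\Bigl\{\,T\in[0,t_0]\;:\;\exists\,R>0\ \text{with}\ \rho\equiv 0\ \text{on}\ B_R(x_0)\times[0,T]\,\Bigr\}.\]
Then $T^{\ast}\geq t_1+r_1/2>t_1$, and it suffices to prove $T^{\ast}=t_0$. Supposing $T^{\ast}<t_0$, fix $R_{\ast}>0$ with $\rho\equiv 0$ on $B_{R_{\ast}}(x_0)\times[0,T^{\ast}]$. Continuity of $P$ in $t$ gives $\oint_{B_{R_{\ast}}(x_0)}P(x,T^{\ast}+\sigma)\,dx\to 0$ as $\sigma\downarrow 0$, so for $\sigma>0$ small enough both $\sigma<\eta$ and the mass hypothesis of Lemma~\ref{l18} hold. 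Lemma~\ref{l18} then gives $\rho\equiv 0$ on $B_{R_{\ast}/6}(x_0)\times\{T^{\ast}+\sigma\}$, and Lemma~\ref{dgg} extends this to $B_{R_{\ast}/6}(x_0)\times[0,T^{\ast}+\sigma]$, contradicting the definition of $T^{\ast}$.

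\textbf{Part (2) (Quantitative estimates in case (II)).} For the backward (``before'') estimate I would iterate Lemma~\ref{l18} along a dyadic time sequence. Starting from a zero cylinder $B_{R_0}(x_0)\times[0,t_0-s_0]$ supplied by part~(1), set $s_k=s_0/2^k$ and apply Lemma~\ref{l18} from time $t_0-s_k$ with time-step $\sigma_k=s_k/2$. Using the uniform bound $P\leq P_M$ from Theorem~\ref{t1}/Proposition~\ref{lta}, the mass condition $\oint P\leq c_0R_k^2/\sigma_k$ reduces to $R_k\geq C\sqrt{s_k}$, while each application of Lemma~\ref{l18} contracts the radius by $1/6$. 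A direct inductive calibration then gives $R(s)\geq\hat{C}s^{\beta}$ with $\beta=\log 2/\log 6$ (or any smaller exponent consistent with the iteration), which is the desired zero-set estimate. For the forward (``after'') estimate I would use Lemma~\ref{l19}: since $(x_0,t_0)\in\Gamma$, for every small $R>0$ the mean $\oint_{B_R(x_0)}\rho^m(\cdot,t_0)\,dx$ must be at least a constant multiple of $R^2/\sigma$ on appropriate scales, because otherwise Lemma~\ref{l18} would yield a zero cylinder extending past $t_0$, contradicting $(x_0,t_0)\in\Gamma$. Lemma~\ref{l19} then produces $\rho^m(x_0,t_0+\lambda\sigma)\gtrsim R^2/\sigma>0$, and the spatial Lipschitz control of $P$ from Theorem~\ref{t3} promotes this pointwise positivity to ball-wise positivity $\rho(x,t_0+s)>0$ on $\{|x-x_0|\leq\hat{C}s^{\beta}\}$ for a suitably rescaled choice of $R$.

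\textbf{Main obstacle.} The delicate part is the bookkeeping in the backward iteration: one must simultaneously preserve the smallness $\sigma_k<\eta$, the mass bound against the uniform upper bound $P_M$, and the ratio between the geometric radius contraction ($1/6$) and the time contraction ($1/2$), so that a single exponent $\beta$ works uniformly along the iteration. A parallel difficulty appears in the ``after'' step, where the single-point lower bound of Lemma~\ref{l19} must be upgraded to a ball-wise lower bound using only the Lipschitz control of $P$; choosing the auxiliary radius $R$ in Lemma~\ref{l19} so that the forward and backward exponents match is the main technical calibration.
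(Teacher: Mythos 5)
Your overall plan---combining the support monotonicity of Lemma~\ref{dgg} with iterated applications of Lemmas~\ref{l18} and~\ref{l19}---is precisely the approach the paper intends: the text defers this theorem to Theorems~3.1--3.2 of \cite{CF1980}, which are proved by exactly such an argument. However, two steps in your write-up do not close as written.

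In Part~(1) you define $T^{\ast}$ as a supremum and then ``fix $R_{\ast}>0$ with $\rho\equiv 0$ on $B_{R_{\ast}}(x_0)\times[0,T^{\ast}]$''. This assumes that $T^{\ast}$ itself belongs to the set over which you take the supremum, i.e.\ that $x_0$ is interior to $\{\rho(\cdot,T^{\ast})=0\}$. By the support monotonicity of Lemma~\ref{dgg} the zero-ball radius $R(T)$ is nonincreasing in $T$, so $R(T)\searrow R^{\ast}\geq 0$ as $T\nearrow T^{\ast}$; if $R^{\ast}=0$ the desired $R_{\ast}$ does not exist, and Lemma~\ref{l18} cannot be applied. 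Ruling out $R^{\ast}=0$ is the ``closedness'' half of the connectedness argument and is not free: it requires either a lower bound on $R(T)$ as $T\to T^{\ast}$ (essentially the backward estimate of Part~(2) applied at the point $(x_0,T^{\ast})$), or an extra use of Lemma~\ref{l19} to control $\oint_{B_{R(T)}}P(\cdot,T^{\ast})$ so that the mass hypothesis in Lemma~\ref{l18} survives. As written, this is a genuine gap.

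In Part~(2) the backward iteration is internally inconsistent. With $s_k=s_0/2^k$ and $\sigma_k=s_k/2$, the mass condition (using the crude bound $P\leq P_M$) reads $R_k\geq C\sqrt{s_k}=C\sqrt{s_0}\,2^{-k/2}$, whereas each application of Lemma~\ref{l18} only yields $R_k=R_0/6^k$. Since $6>\sqrt 2$, the required inequality $R_0/6^k\geq C\sqrt{s_0}\,2^{-k/2}$ fails after a bounded number of steps, so the iteration stalls and the estimate cannot be pushed down to all small $s$. To make the bookkeeping close one must shrink the time by a factor $q\leq 1/36$ per step (so that $\sigma_k\lesssim R_k^2$ is preserved under the $1/6$ contraction of the radius), which gives $\beta=\log 6/\log(1/q)\leq 1/2$. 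Your reported exponent $\beta=\log 2/\log 6$ is also inverted: with the dyadic choice $s_k=s_0/2^k$, $R_k=R_0/6^k$ one formally gets $R\sim s^{\log 6/\log 2}$, not $s^{\log 2/\log 6}$---but neither exponent is actually produced by that calibration, because the mass constraint is violated. This calibration, and how to reconcile the $1/6$ radius loss in Lemma~\ref{l18} with the $R^2/\sigma$ mass scaling, is exactly the technical content of the cited Caffarelli--Friedman argument and needs to be carried out carefully. The forward half of Part~(2), deducing ball-wise positivity from the pointwise positivity of Lemma~\ref{l19} via the Lipschitz bound of Theorem~\ref{t3}, is sound once the mass lower bound at $t_0$ is in place.
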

\begin{proof}
The proof of this lemma is a direct application of Lemmas \ref{l18}-\ref{l19} and is parallel to \cite[Theorems 3.1-3.2]{CF1980}, we omit detailed processes.
\end{proof}
\begin{remark} When the initial pressure statisfies \eqref{i1}-\eqref{i2}, the vertical line in Theorem \ref{vhc} is type-$\RNum{2}$ on account of~\cite{ACK1983}.
\end{remark}

\paragraph{\large Non-degeneracy near the free boundary.}

The behaviors of the pressure near the free boundary for the porous medium type reaction-diffusion equation~\eqref{de} is about to be analyzed. Similar to the PME~\eqref{PME}, the pressure~\eqref{de} is linearly degenerate near the free boundary after a finite time, which supports the fact that Lipschitz's continuity is the optimal (sharp) regularity for the pressure. We mainly argue by the inf-convolution technique as in \cite{KZ2021} to achieve our goal. The difference lies in dealing with differences caused by the logistic source term for the Cauchy problem~\eqref{de}.

We recall some basic properties of the inf-convolution of smooth functions. Let $\psi,h\in C^{\infty}(B_2)$ with $0<\psi<\frac{1}{2}$ and $h\geq0$. Define
\begin{equation}\label{inf-con}
f(x):=\inf\limits_{y\in B(x,\psi(x))}h(y),
\end{equation}
which is Lipschitz continuous in $B_1$. The following lemma, from \cite[Lemma 5.3]{KZ2021}, is concerned on Laplace's operator $\Delta$ working on a class of inf-convolution function.
\begin{lemma}\label{inf-del}
Let $h$ and $f$ be from~\eqref{inf-con}. Furthermore, suppose $\Delta h\geq -C$ for some constant  $C>0$ and $\|\nabla\psi\|_{L^\infty}\leq1$. Then,  there exist two constants $\sigma_1>0$ and $\sigma_2\geq3$ such that if $\psi>0$ satisfies
\begin{equation*}
\Delta\psi\geq\frac{\sigma_1|\nabla\psi|^2}{\psi}\quad\text{in }B_2,
\end{equation*}
it holds
\begin{equation*}
\Delta f(\cdot)-(1+\sigma_2\|\nabla\psi\|_{L^\infty})\Delta h(y(\cdot))\leq \sigma_2\|\nabla\psi\|_{L^\infty}C\quad\text{in }\mathcal{D}'(B_1),
\end{equation*}
where $y(\cdot)$ satisfies $f(\cdot)=h(y(\cdot))$ a.e. in $B_1$.
\end{lemma}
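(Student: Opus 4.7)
The plan is to prove the inequality pointwise at $\mathcal{L}^n$-a.e.\ $x_0\in B_1$ and then upgrade to the distributional statement by a standard mollification argument. Since $f$ is Lipschitz and in fact semi-concave --- being the infimum of $C^\infty$ data over balls whose radii vary with Lipschitz $\psi$ --- Alexandrov's theorem supplies a full-measure set of twice-differentiability. Fix such an $x_0$, pick $y_0\in\overline{B(x_0,\psi(x_0))}$ with $f(x_0)=h(y_0)$, and split into the \emph{interior case} $|y_0-x_0|<\psi(x_0)$ and the \emph{boundary case} $|y_0-x_0|=\psi(x_0)$.

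In the interior case, $\nabla h(y_0)=0$ and $\nabla^2 h(y_0)\succeq 0$; the implicit function theorem yields a $C^1$ selection $y(\cdot)$ so that $\phi(x):=h(y(x))$ is a $C^2$ upper barrier with $\phi(x_0)=f(x_0)$, and the desired estimate follows at once from $\Delta f(x_0)\le\Delta\phi(x_0)=\Delta h(y_0)+O(\|\nabla\psi\|_\infty)$. The substantive work is the boundary case. Setting $e:=(y_0-x_0)/\psi(x_0)$ and using the test trajectory
\[ y(x):=x+\psi(x)\,e,\qquad \phi(x):=h(y(x)), \]
one has $y(x)\in\partial B(x,\psi(x))$, so $\phi\ge f$ locally with equality at $x_0$, and hence $\Delta f(x_0)\le\Delta\phi(x_0)$ by second-order Taylor. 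A direct chain-rule computation yields
\[ \Delta\phi(x_0)-\Delta h(y_0)=2\,\nabla\psi^{\!\top}\nabla^2 h(y_0)\,e+|\nabla\psi|^2\,e^{\!\top}\nabla^2 h(y_0)\,e+(\nabla h(y_0)\!\cdot\! e)\,\Delta\psi(x_0), \]
and the Lagrange condition $\nabla h(y_0)=-\mu e$ with $\mu\ge 0$ renders the last term $-\mu\Delta\psi(x_0)\le 0$; the hypothesis $\Delta\psi\ge\sigma_1|\nabla\psi|^2/\psi$ actually converts it into a negative reserve of size $\mu\sigma_1|\nabla\psi|^2/\psi$.

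The hard part is controlling the two \emph{cross-Hessian} terms purely in terms of $\Delta h$ and $C$, since individual entries of $\nabla^2 h(y_0)$ are not bounded by $\Delta h\ge-C$ alone. I plan a two-step absorption. First, the second-order necessary condition at the constrained minimum yields $v^{\!\top}\nabla^2 h(y_0)v\ge(\mu/\psi(x_0))|v|^2$ for every $v\perp e$; combined with $\Delta h\ge-C$ this forces $e^{\!\top}\nabla^2 h(y_0)e\le\Delta h(y_0)+C-(n-1)\mu/\psi(x_0)$, and therefore $|\nabla\psi|^2\,e^{\!\top}\nabla^2 h(y_0)e\le|\nabla\psi|^2(\Delta h(y_0)+C)$ up to a non-positive remainder. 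Second, for the genuinely off-diagonal piece $2\nabla\psi^{\!\top}\nabla^2 h(y_0)e$, an AM--GM inequality tuned to the tangential/normal decomposition of $\nabla\psi$ produces an error $\lesssim|\nabla\psi|^2/\psi(x_0)$, weighted by $\mu$, which the negative reserve above absorbs provided $\sigma_1$ is chosen large enough (independently of $h$). Collecting gives $\Delta\phi(x_0)-\Delta h(y_0)\le\sigma_2\|\nabla\psi\|_\infty(\Delta h(y_0)+C)$, with $\sigma_2\ge 3$ absorbing the combinatorics $2|\nabla\psi|+|\nabla\psi|^2\le 3\|\nabla\psi\|_\infty$ under $\|\nabla\psi\|_\infty\le 1$, which is precisely the claimed inequality; the delicate point, and the reason for the exact form of the hypothesis on $\psi$, is the quantitative matching of the AM--GM weight against $\sigma_1$ that closes the absorption.
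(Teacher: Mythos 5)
The paper itself does not prove this lemma---it is quoted directly from \cite[Lemma~5.3]{KZ2021}---so your argument has to stand on its own, and it has a genuine gap in the boundary case, precisely at the step you flag as ``the hard part.'' The cross-Hessian term $2\nabla\psi^{\top}\nabla^{2}h(y_{0})e$ is not controlled by the tools you invoke. In coordinates with $e=e_{n}$, write $M=\nabla^{2}h(y_{0})$ and decompose it into the tangential block $M'$, the mixed column $m=(M_{in})_{i<n}$, and the scalar $M_{nn}$. Your two hypotheses---$\operatorname{tr}M\geq -C$ and the second-order tangential condition on $v^{\top}Mv$ for $v\perp e$---say nothing about $m$, yet $2\nabla\psi^{\top}Me=2(\nabla\psi)'\cdot m+2(\nabla\psi)_{n}M_{nn}$ contains the unconstrained quantity $(\nabla\psi)'\cdot m$. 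Concretely, in $n=2$ take $M=\bigl(\begin{smallmatrix}0&t\\ t&0\end{smallmatrix}\bigr)$ and $\nabla h(y_{0})=-\mu e_{2}$: then $\operatorname{tr}M=0\geq -C$, the tangential condition $M_{11}=0\geq -\mu/\psi$ holds, and yet your fixed-direction barrier $\phi(x)=h(x+\psi(x)e)$ yields $\Delta\phi(x_{0})=2(\nabla\psi)_{1}t-\mu\Delta\psi(x_{0})$, which diverges as $t\to\infty$ even though the lemma's conclusion stays bounded. No AM--GM can close this: $t$ is simply absent from the hypotheses, while the negative reserve $-\mu\Delta\psi$ is $O(\mu)$ and cannot compete with an arbitrary $t$. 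You also have a sign error in the second-order condition: along a great circle $\gamma$ through $y_{0}$ one has $\gamma''(0)=-\frac{|v|^{2}}{\psi}e$, so $\frac{d^{2}}{ds^{2}}\big|_{0}h(\gamma(s))=v^{\top}Mv+\mu|v|^{2}/\psi\geq 0$, i.e.\ $v^{\top}Mv\geq -\mu|v|^{2}/\psi$, not $\geq +\mu|v|^{2}/\psi$; accordingly the bound should read $e^{\top}Me\leq\Delta h(y_{0})+(n-1)\mu/\psi$, producing a \emph{positive} extra term that must itself be absorbed by $-\mu\Delta\psi$.

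The missing idea, and the structural reason the hypothesis $\Delta\psi\geq\sigma_{1}|\nabla\psi|^{2}/\psi$ appears, is that the barrier direction must rotate: use $\phi(x)=h\bigl(x+\psi(x)\nu(x)\bigr)$ with a unit vector field $\nu$, $\nu(x_{0})=e$, and choose $\nabla\nu(x_{0})$ (subject to $\nabla\nu(x_{0})\,e=0$, forced by $|\nu|\equiv1$) so that the new contributions to $\Delta\phi(x_{0})$, of the form $\psi\operatorname{tr}\bigl(M(\nabla\nu+\nabla\nu^{\top})\bigr)+2\psi\,\nabla\psi^{\top}\nabla\nu\,Me$, cancel the cross term $2\nabla\psi^{\top}Me$ \emph{identically}, not merely in size. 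The price of rotation is an extra $\mu\psi|\nabla\nu|^{2}\sim\mu|\nabla\psi|^{2}/\psi$, and it is exactly this quantity that $-\mu\Delta\psi\leq-\sigma_{1}\mu|\nabla\psi|^{2}/\psi$ absorbs for $\sigma_{1}$ large; that cancellation-plus-absorption is what closes the proof. A minor side remark: the interior case does not need the implicit function theorem, which can fail when $\nabla^{2}h(y_{0})$ is merely positive semidefinite; the constant barrier $\phi\equiv h(y_{0})$ already gives $\Delta f(x_{0})\leq 0\leq\Delta h(y_{0})\leq(1+\sigma_{2}\|\nabla\psi\|_{\infty})\Delta h(y_{0})+\sigma_{2}\|\nabla\psi\|_{\infty}C$.
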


For the weak solution $\rho$ to the Cauchy problem~\eqref{de} in $A_2$, the smooth approximation $\{\rho_k\}_{k\in\mathbb{N}}$ of $\rho$ is needful as in \cite{KZ2021,PMEs}. 

Let $\rho_k$ be a weak solution to the following modified porous medium type reaction-diffusion equation~$\eqref{de}_1$:
\begin{equation}\label{mde}
\partial_t\rho_k=\Delta\rho_k^m+\rho_k g(\rho_k)\quad\text{for }(x,t)\in\mathbb{R}^n\times\mathbb{R}_+
\end{equation}
with the initial data
\begin{equation}\label{mini}
\rho_{k,0}=\max\{\rho_0,\frac{1}{k}\}.
\end{equation}
The corresponding pressure is given by 
\begin{equation*}
P_k=\frac{m}{m-1}\rho_k^{m-1},\quad (x,t)\in\mathbb{R}^n\times\mathbb{R}_+.
\end{equation*}

\begin{lemma}\label{smoothappro}
Let $\rho_k$ be the weak solution to the Cauchy problem~\eqref{mde}-\eqref{mini}. Under the initial assumption $0\leq \rho_0(x)\leq \rho_H$ for $x\in\mathbb{R}^n$, then it holds
\begin{equation*}
\frac{1}{k}\leq \rho_k(x,t)\leq \rho_H,\quad(x,t)\in\mathbb{R}^n\times\mathbb{R}_+,
\end{equation*}
where $k\geq K_M$ with $\frac{1}{K_M}\leq \frac{\rho_H}{2}$ and $P_H=\frac{m}{m-1}\rho_H^{m-1}$. Furthermore, $\rho_k$ is smooth in $\mathbb{R}^n\times \mathbb{R}_+$. In addition, after extracting the subsequence, $\{\rho_k\}_{k\geq K_N}$ uniformly locally converges to $\rho$ that is the weak solution to the Cauchy problem~\eqref{de}.
\end{lemma}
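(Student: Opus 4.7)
The plan is to establish the three assertions in order by combining the comparison principle, classical uniformly-parabolic regularity, and an Aubin--Lions compactness argument.

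\emph{Bounds via comparison.} First I would verify that, for $k \geq K_M$, the constants $\underline{\rho} \equiv 1/k$ and $\overline{\rho} \equiv \rho_H$ are respectively a sub- and a super-solution of~\eqref{mde}. Since $g$ is monotone decreasing with $g(\rho_M) = 0$ and $\rho_H \geq \rho_M$, we have $g(\rho_H) \leq 0$; choosing $K_M$ so that additionally $1/K_M \leq \rho_M$ gives $g(1/k) \geq 0$. Plugging into~\eqref{mde} yields $\partial_t \underline{\rho} - \Delta \underline{\rho}^m - \underline{\rho} g(\underline{\rho}) \leq 0$ and the reverse inequality for $\overline{\rho}$. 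As $1/k \leq \rho_{k,0} = \max\{\rho_0, 1/k\} \leq \rho_H$, the comparison principle for the porous-medium-type equation from~\cite{PERTHAME2014} yields $1/k \leq \rho_k \leq \rho_H$.

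\emph{Smoothness.} Once this strict lower bound is in hand, expand~\eqref{mde} as $\partial_t \rho_k = m \rho_k^{m-1} \Delta \rho_k + m(m-1)\rho_k^{m-2}|\nabla \rho_k|^2 + \rho_k g(\rho_k)$. This is a quasilinear equation that is now uniformly parabolic with leading coefficient bounded below by $m k^{-(m-1)} > 0$, and with smooth dependence on $(\rho_k, \nabla \rho_k)$ as long as $g \in C^\infty(0, P_H]$ is assumed sufficiently regular. A standard bootstrap --- De Giorgi--Nash--Moser for an interior H\"older bound, then iterated Schauder estimates on parabolic cylinders --- delivers $\rho_k \in C^\infty(\mathbb{R}^n \times \mathbb{R}_+)$.

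\emph{Convergence.} For the last claim I would derive $k$-uniform estimates by testing~\eqref{mde} against compactly supported cut-offs: this controls $\rho_k^m$ in $L^2_{\mathrm{loc}}(0,T; H^1_{\mathrm{loc}}(\mathbb{R}^n))$ and $\partial_t \rho_k$ in a negative-order Sobolev space, both independently of $k$. Aubin--Lions then extracts a subsequence converging strongly in $L^2_{\mathrm{loc}}$ (hence a.e.) to some $\rho \in [0, \rho_H]$; the DiBenedetto-style interior H\"older estimates for the porous medium equation~\cite{PMEs} promote this to local uniform convergence. These are enough to pass to the limit in the weak formulation~\eqref{weaksolution}, using $\rho_{k,0} \to \rho_0$ in $L^1_{\mathrm{loc}}$ for the initial datum; the limit is the weak solution of~\eqref{de}, and the uniqueness statement of~\cite{PERTHAME2014} forces the entire sequence to converge. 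The genuine obstacle here is producing the $k$-independent $H^1$ control on $\rho_k^m$ and the auxiliary H\"older estimate without leaning on the strict positivity $\rho_k \geq 1/k$ (which degenerates in the limit); this is where care with the Aronson--B\'enilan estimate of Lemma~\ref{lAB} and the energy identity for the pressure is required.
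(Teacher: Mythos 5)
Your proposal follows the same route as the paper: constants $1/k$ and $\rho_H$ are used as sub- and super-solutions to invoke the comparison principle, strict positivity renders the equation uniformly parabolic so interior parabolic regularity bootstraps to smoothness, and the locally uniform convergence to a weak solution of~\eqref{de} is obtained by the compactness/subsequence argument as in V\'azquez's treatment of the PME. You are in fact slightly more careful than the paper's proof: you correctly observe that the sub-solution inequality $-\tfrac{1}{k}g(\tfrac{1}{k})\leq 0$ requires $1/K_M \leq \rho_M$ (so that $g(1/k)\geq 0$), whereas the paper only states $1/K_M\leq \rho_H/2$, which does not by itself guarantee this.
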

\begin{proof}By direct computations, we have
\begin{equation*}
\begin{aligned}
&\mathcal{L}\big(\frac{1}{k}\big)=-\frac{1}{k}g(\frac{1}{k})\leq0,\\
&\mathcal{L}\big(\rho_M\big)=\rho_H g(\rho_H)=0.
\end{aligned}
\end{equation*}
which implies that $\frac{1}{k}$ and $\rho_H$ are a weak sub-solution and a  weak sup-solution to~\eqref{mde} respectively.

Thanks to the initial assumption~\eqref{mini} and the fact that $\tilde{\mathcal{L}}(\rho_k)=0$, it concludes by the comparison principle that
\begin{equation*}
\frac{1}{k}\leq \rho_k(x,t)\leq \rho_H\quad\text{for }(x,t)\in\mathbb{R}^n\times\mathbb{R}_+.
\end{equation*}
Hence, by the standard interior uniformly parabolic estimate, $\rho_k$ is smooth in $\mathbb{R}^n\times(0,\infty)$. And after extracting the subsequence as in~\cite[Lemma 9.5]{PMEs} for the porous medium equation, $\{\rho_k\}_{k\geq K_N}$ uniformly locally converges to $\rho$, and $\rho$ is the weak solution to the Cauchy problem~\eqref{de}-\eqref{pe}, and the proof is completed.
\end{proof}

Let $\varphi:\ \mathbb{R}^n\to(0,\infty)$ be a smooth function and $\sigma_1, \sigma_2$ be from Lemma \ref{inf-del}. For some constants $N_0, M_0\geq 1$ to be determined later, we define
\begin{align}
&\omega_{k}(x,t)=e^{N_0\varepsilon t}\inf\limits_{y\in B(x, R_{\varepsilon}(x))}P_k(y+r\varepsilon \mu, P_{\varepsilon}(t)),\label{k11}\\
&\omega(x,t)=e^{N_0\varepsilon t}\inf\limits_{y\in B(x, R_{\varepsilon}(x))}P(y+r\varepsilon \mu, P_{\varepsilon}(t)),\label{k12}
\end{align}
and
\begin{equation}\label{k13}
\eta_{k}:=e^{N_1\varepsilon t}\inf\limits_{y\in B(x, R_{\varepsilon}(x))}\rho_k(y+r\varepsilon \mu, P_{\varepsilon}(t))\quad \text{with }N_1=\frac{N_0}{m-1},
\end{equation}
where
\begin{align}
&R_{\varepsilon}(x):=\varepsilon \varphi,\label{k14}\\
&P_{\varepsilon}(t):=(1+\sigma_2M_0\varepsilon)(\frac{e^{N_0\varepsilon t}-1}{N_0\varepsilon}).\label{k15}
\end{align}
Due to~\eqref{inf-con},
 $\omega_k$ is Lipschitz continuous and
\begin{equation}\label{etak}
\eta_{k}:=(\frac{m-1}{m}\omega_{k})^{1/(m-1)} \text{ is Lipschitz continuous }.
\end{equation}
Thus, it is sufficient to show that $\eta=(\frac{m-1}{m}\omega)^{1/(m-1)}$ is a sup-solution for $\mathcal{L}(f)=0$, where \begin{equation}\label{para-oper}
\mathcal{L}(f):=\partial_t f-\Delta f^m-fg(f).
\end{equation}
\begin{proposition}\label{p1}
Let $P_k$, $\rho_k$ be defined in Lemma~\ref{smoothappro}, and suppose that $P_k$ satisfies $\Delta P_k\geq -C_0':= -C_0-G(0)$ in $A_2$. Fix any $M_0\geq 1$ and consider $\varphi: \mathbb{R}^n\to \mathbb{R}$ such that

\begin{equation}\label{k16}
\begin{cases}
\Delta \varphi =\frac{\sigma_1|\nabla\varphi|^2}{\varphi},&\\
\frac{r}{M_0}\leq \varphi(\cdot)\leq rM_0,\quad\|\nabla\varphi\|_{L^{\infty}(\mathbb{R}^n)}\leq M_0\quad \text{for some }r\in (0,1).&\\
\end{cases}
\end{equation}
Then, there exist positive constants $N_0, \tau$ depending on $M_0$ and the universal constant such that for all $\varepsilon <\frac{1}{M_0}$, $\eta$  (given by \eqref{etak}) is a weak sup-solution for $\mathcal{L}$, i.e.,
\begin{equation*}
\mathcal{L}(\eta)\geq 0\quad\text{weakly in } B_{r}\times(0,\tau).
\end{equation*}
\end{proposition}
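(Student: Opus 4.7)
The plan is to establish the proposition first at the smooth level for the approximations $(\rho_k,P_k)$ of Lemma~\ref{smoothappro} and then pass to the limit $k\to\infty$. Since $\eta_k=\bigl(\tfrac{m-1}{m}\omega_k\bigr)^{1/(m-1)}$ is the density associated to the pressure $\omega_k$, the inequality $\mathcal{L}(\eta_k)\ge 0$ is equivalent to
\[
\mathcal{L}_P(\omega_k):=\partial_t\omega_k-(m-1)\omega_k\Delta\omega_k-|\nabla\omega_k|^2-(m-1)\omega_k G(\omega_k)\ge 0
\]
in $\mathcal{D}'(B_r\times(0,\tau))$, so I would work exclusively with $\omega_k$. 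Writing $z^\ast(x,t):=y^\ast(x,t)+r\varepsilon\mu$ for the shifted (a.e.) minimizer of the inf-convolution and setting $\psi(x)=R_\varepsilon(x)=\varepsilon\varphi(x)$, the assumptions \eqref{k16} on $\varphi$ translate into $\Delta\psi\ge\sigma_1|\nabla\psi|^2/\psi$ and $\|\nabla\psi\|_\infty\le\varepsilon M_0$, so Lemma~\ref{inf-del} directly yields the Laplacian upper bound
\[
\Delta\omega_k\;\le\;(1+\sigma_2 M_0\varepsilon)\,e^{N_0\varepsilon t}\,\Delta P_k(z^\ast,P_\varepsilon(t))+\sigma_2 M_0\varepsilon C_0'\,e^{N_0\varepsilon t}.
\]

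Next I would differentiate $\omega_k$ in time using $P_\varepsilon'(t)=(1+\sigma_2 M_0\varepsilon)e^{N_0\varepsilon t}$ and substitute the pressure equation \eqref{pe} for $P_k$ to obtain
\[
\partial_t\omega_k=N_0\varepsilon\,\omega_k+(1+\sigma_2 M_0\varepsilon)\,e^{2N_0\varepsilon t}\bigl[(m-1)P_k(\Delta P_k+G(P_k))+|\nabla P_k|^2\bigr]\Bigl|_{(z^\ast,P_\varepsilon(t))},
\]
while the envelope theorem at active constraints $|y^\ast-x|=\psi(x)$ yields the pointwise gradient bound $|\nabla\omega_k|^2\le(1+\varepsilon M_0)^2\,e^{2N_0\varepsilon t}|\nabla P_k|^2(z^\ast,P_\varepsilon(t))$. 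Inserting these three ingredients into $\mathcal{L}_P(\omega_k)$, the $(m-1)P_k\Delta P_k$ contributions cancel exactly between $\partial_t\omega_k$ and $-(m-1)\omega_k\Delta\omega_k$; the gradient parts leave the nonnegative remainder $\bigl[(1+\sigma_2 M_0\varepsilon)-(1+\varepsilon M_0)^2\bigr]e^{2N_0\varepsilon t}|\nabla P_k|^2\ge 0$ (valid for $\sigma_2\ge 3$ and $\varepsilon M_0<1$); what survives is
\[
\mathcal{L}_P(\omega_k)\;\ge\;N_0\varepsilon\,\omega_k-(m-1)\sigma_2 M_0\varepsilon C_0'\,e^{N_0\varepsilon t}\omega_k+\mathcal{S},
\]
with source remainder
\[
\mathcal{S}\;=\;(m-1)e^{N_0\varepsilon t}P_k(z^\ast,P_\varepsilon(t))\bigl[(1+\sigma_2 M_0\varepsilon)e^{N_0\varepsilon t}G(P_k(z^\ast,P_\varepsilon(t)))-G(e^{N_0\varepsilon t}P_k(z^\ast,P_\varepsilon(t)))\bigr].
\]

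Choosing $N_0$ sufficiently large (depending on $M_0$, $C_0'$, $P_H$ and $\sup_{[0,P_H]}|G'|$) and $\tau$ so small that $e^{N_0\varepsilon\tau}$ is close to $1$ will let $N_0\varepsilon\omega_k$ absorb both the $C_0'$ error and $|\mathcal{S}|$, giving $\mathcal{L}_P(\omega_k)\ge 0$ distributionally on $B_r\times(0,\tau)$. Finally, passing $k\to\infty$ using the locally uniform convergence $P_k\to P$ (Lemma~\ref{smoothappro}) together with the stability of inf-convolutions under uniform limits transfers the inequality from $\omega_k$ to $\omega$, and hence from $\eta_k$ to $\eta$.

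The hard part will be controlling $\mathcal{S}$: the arguments of $G$ on the two sides differ by the factor $e^{N_0\varepsilon t}>1$, so monotonicity $G'\le 0$ alone can yield the wrong sign when $G(P_k)<0$, which may occur if $P_k\in(P_M,P_H]$. My remedy is a mean-value expansion $G(e^{N_0\varepsilon t}P_k)-G(P_k)=G'(\xi)(e^{N_0\varepsilon t}-1)P_k$, exploiting the assumption $G\in\mathcal{C}^1(0,P_H]$, which yields an estimate of the form $|\mathcal{S}|\le C(G,P_H)\,\varepsilon N_0 t\,\omega_k$ and hence is absorbable by $N_0\varepsilon\omega_k$ once $\tau$ is small. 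A secondary subtlety is verifying the inequality on $\{\omega_k=0\}$ in the distributional sense: here every remaining term in $\mathcal{L}_P(\omega_k)$ carries a factor of $\omega_k$ or $|\nabla\omega_k|^2$, both of which vanish consistently at the free boundary by the continuity and degeneracy structure inherited from the pressure equation, so the inequality extends across the free surface in the weak sense.
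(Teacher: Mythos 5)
Your approach has the same skeleton as the paper's: rescale time by $P_\varepsilon(t)$ and the solution by $e^{N_0\varepsilon t}$, invoke Lemma~\ref{inf-del} for the Laplacian of the inf-convolution, observe that the leading $\Delta P_k$ terms drop out of $\mathcal{L}_P$ up to a $\sigma_2M_0\varepsilon C_0'$ error, kill the gradient mismatch using $\sigma_2\geq 3$, and then absorb the remaining errors by $N_0\varepsilon\omega_k$ with $N_0$ large and $\tau$ small. You work in the pressure variable $\omega_k$ while the paper works with the density $\eta_k$; since $\mathcal{L}_P(\omega)=m\eta^{m-2}\mathcal{L}(\eta)$ wherever $\eta>0$ (and $\eta_k\geq 1/k>0$), this is purely cosmetic.

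Where you genuinely diverge is the reaction term. The paper writes the source mismatch as $\mathcal{A}=\eta_k\bigl[(1+\sigma_2M_0\varepsilon)\kappa^{m-1}(t)g(\rho_k)-g(\eta_k)\bigr]$ and argues $\geq\eta_k[g(\rho_k)-g(\eta_k)]\geq 0$ using only $g$ decreasing; the first inequality, however, implicitly requires $g(\rho_k)\geq 0$, i.e.\ $P_k\leq P_M$, whereas Lemma~\ref{smoothappro} only gives $\rho_k\leq\rho_H$ with $\rho_H\geq\rho_M$. You correctly flag this and replace the monotonicity argument by a mean-value expansion in $G$ (legitimate since Theorem~\ref{t3} assumes $G\in\mathcal{C}^1(0,P_H]$), absorbing the resulting error into $N_0\varepsilon\omega_k$ by shrinking $\tau$. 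That buys robustness where the paper's sign argument can fail. Two small corrections: your stated bound $|\mathcal{S}|\leq C\varepsilon N_0 t\,\omega_k$ misses the $t$-independent contribution $\sigma_2M_0\varepsilon|G(P_k)|$ from the prefactor $(1+\sigma_2M_0\varepsilon)e^{N_0\varepsilon t}-1$; the correct bound is of order $\varepsilon(1+N_0t)\omega_k$, which is still absorbable once $N_0$ is large and $\tau$ small, so the strategy survives. Also the identity you wrote for $\partial_t\omega_k$ should be an inequality $\geq$ (as in \eqref{k24}), since the time-derivative of an infimum only satisfies the envelope relation from below; this is what the paper records and what the distributional argument actually uses.
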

\begin{proof}

Below we estimate each term of $\mathcal{L}(\eta_k)$ in $B_r\times (0,\tau)$ by means of $\rho_k$. We begin with some preliminary estimates on $\eta_k$.

Since $\rho_k$ is smooth and positive, $P_k$ is also smooth and positive. From the definition of the inf-convolution, it follows that $\eta_k$ is Lipschitz continuous. Since $\Delta P_k\geq -C_0'$, direct computation yields
\begin{equation}\label{k17}
\Delta \rho_k^m\geq -C_0' \rho_k.
\end{equation}

Let us define the constant
\begin{equation}\label{k18}
N_0:=\sigma_3 M_0(1+C_0')
\end{equation}
for some constant $\sigma_3> \sigma_2$ to be determined later, and
\begin{equation}\label{k19}
\tau:=\min\big\{\frac{1}{2N_1},\frac{1}{2N_0},\frac{1}{\sigma_2M_0}, \frac{1}{(m-1)N_1\varepsilon}\log(\frac{N_1-\sigma_2M_0C_0'}{2\sigma_2M_0C_0'})\big\}.
\end{equation}

By the definition of $\eta_k$, there exists $z(x,t)$ satisfying
\begin{equation}\label{k20}
|z(x,t)-x|\leq |R_{\varepsilon}(x)|+r\varepsilon\leq 2M_0r\varepsilon
\end{equation}
such that
\begin{equation}\label{k21}
\eta_{k}(x,t)=\kappa(t)\rho_{k}(z(x,t),P_{\varepsilon}(t)),\quad \kappa(t):=e^{N_1\varepsilon t}.
\end{equation}

It follows from the definition of $P_{\varepsilon}(t)$ in \eqref{k15} that
\begin{align}
&P_{\varepsilon}'(t)=(1+\sigma_2M_0\varepsilon)\kappa^{m-1}(t)\label{k22}.
\end{align} For simplicity, $P_{\varepsilon},\eta_k$ denotes the values of them at $(x,t)$, and $\rho_k, \partial_t \rho_k, \Delta \rho_k^m$ denotes the values of them at $(z(x,t), P_{\varepsilon}(t))$.

 Since $\eta_k$ is Lipschitz continuous,  it holds by direct computations and in the sense of distribution that
 \begin{equation}\label{k24}
 \partial_{t}\eta_k\geq N_1\varepsilon \kappa(t) \rho_k+(P_{\varepsilon}')\kappa(t) \partial_t\rho_k\quad\text{in }B_{r}\times (0,\tau).
 \end{equation}
 Applying \eqref{k22} to \eqref{k24}, it holds
 \begin{equation}\label{k25}
  \partial_{t}\eta_k\geq N_1\varepsilon \eta_k+(1+\sigma_2M_0\varepsilon)\kappa^{m}(t)\partial_t\rho_k.
 \end{equation}

  Thanks to the definition of $\varphi$, we have  $\|R_{\varepsilon}\|_{L^{\infty}(\mathbb{R}^n)}\leq rM_0\varepsilon$ and $\|\nabla R_{\varepsilon}\|_{L^{\infty}(\mathbb{R}^n)}\leq M_0\varepsilon$ hold. We now apply Lemma \ref{inf-del} with $h=\rho_{k}^m(\cdot+r\varepsilon\mu,P_{\varepsilon})$ and $\psi=R_{\varepsilon}$. From \eqref{k17} i.e. $\Delta h\geq -C_0'\rho_k$, then the following holds:
 \begin{equation}\label{k26}
 \begin{aligned}
 -\Delta \eta_k^{m}&\geq -(1+\sigma_2\|R_{\varepsilon}\|_{L^{\infty}(\mathbb{R}^n)})\kappa^m(t)\Delta \rho_{k}^m(t)-\sigma_2\|\nabla R_{\varepsilon}\|_{L^{\infty}(\mathbb{R}^n)} C_0'\rho_k\\
 &\geq -(1+\sigma_2M_0\varepsilon)\kappa^m(t)\Delta \rho_k^m-\sigma_2M_0C_0'(1+2r\kappa^m(t))\varepsilon \rho_k
 \end{aligned}
 \end{equation}
 in the sense of distribution. 
Combining \eqref{k25}-\eqref{k26}, it concludes by~\eqref{k18}-\eqref{k19} and choosing large $\sigma_3>\sigma_2$ and small $\tau>0$ that
\begin{equation}\label{k27}
\begin{aligned}
\mathcal{L}(\eta_k)\geq& (N_1-\sigma_2 M_0C_0'-2\sigma_2 M_0C_0'\kappa^{m-1}(\tau))\varepsilon\eta_k\\
&+(1+\sigma_2M_0\varepsilon)\kappa^m(t)(\partial_t\rho_k-\Delta \rho_k^m)-\eta_kg(\eta_k) \\
\geq& (1+\sigma_2M_0\varepsilon)\kappa^m(t)\rho_k g(\rho_k)-\eta_k g(\eta_k):=\mathcal{A}.
\end{aligned}
\end{equation}
According to the monotonic  decreasing property of $g$, we have
\begin{equation}\label{k28}
\begin{aligned}
\mathcal{A}:&=(1+\sigma_2M_0\varepsilon)\kappa^m(t)\rho_k g(\rho_k)-\eta_k g(\eta_k)\\
&\geq \eta_k[(1+\sigma_2M_0\varepsilon)\kappa^{m-1}(t) g(\rho_k)- g(\eta_k)]\\
&\geq\eta_k[g(\rho_k)-g(\eta_k)]\geq 0.
\end{aligned}
\end{equation}
Therefore, we conclude by combining~\eqref{k27}-\eqref{k28} that
\begin{equation*}
\mathcal{L}(\eta_k)\geq 0\quad \text{in }B_{r}\times(0,\tau).
\end{equation*}

It is not hard to see that the choice of $N_0, \tau$ is independent of $r$ and $k$.   Since $\eta_k$ pointwisely converges to $\eta$ as $k$ tends to infinity, it holds in the weak sense that
\begin{equation*}
\mathcal{L}(\eta)\geq 0\quad \text{in }B_{r}\times(0,\tau),
\end{equation*}  and the proof is completed.
\end{proof}
We turn to prove the linear growth rate near the free (support) boundary of the pressure for the Cauchy problem~\eqref{de}.
\begin{lemma}\label{non-dege}
Let $\rho$ solve \eqref{de} in $A_2$ with $\Delta P\geq -C_0'$, there exist $C, \varepsilon_0>0$ both depending on $C_0', \theta, \hat{C}, h, \beta $, and the universal constants such that
\begin{equation}\label{k8}
P(x-\varepsilon\mu, t+C\varepsilon)>0\quad \text{for}\ (x,t)\in\Gamma\cap N_1\ \text{and }0<\varepsilon<\varepsilon_0.
\end{equation}
\end{lemma}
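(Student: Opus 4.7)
The strategy is to combine the Type~II characterization of $\Gamma$ provided by Theorem~\ref{vhc}(2) with the inf-convolution sup-solution from Proposition~\ref{p1}, using the comparison principle to convert the H\"older positivity cone behind $\Gamma$ into a linear-in-$\varepsilon$ non-degeneracy. Fix $(x_0, t_0) \in \Gamma \cap N_1$. By the initial assumptions \eqref{i1}--\eqref{i2}, the vertical line through $(x_0, t_0)$ is of Type~II (see the remark following Theorem~\ref{vhc}), and Theorem~\ref{vhc}(2) yields constants $\hat C, \beta, h > 0$ with $P(y, t_0 + s) > 0$ whenever $|y - x_0| \leq \hat C s^\beta$ and $s \in (0, h)$. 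Combining this with the Lipschitz estimate of Theorem~\ref{t3}, I would extract a fixed small time $s_* \in (0, h)$ and a ball $B(x_0, r_*)$ on which $P(\cdot, t_0 + s_*) \geq c_* > 0$, with $r_* < \hat C s_*^\beta$ and $c_* > 0$ depending only on $\hat C, h, \beta$ and the universal constants.

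Next I build the barrier. Pick a smooth positive $\varphi : \mathbb{R}^n \to (0, \infty)$ satisfying the hypotheses of Proposition~\ref{p1}: $\Delta \varphi = \sigma_1 |\nabla \varphi|^2 / \varphi$ with $r/M_0 \leq \varphi \leq r M_0$ and $\|\nabla \varphi\|_{L^\infty} \leq M_0$ for some $r \in (0, 1)$ chosen comparable to $r_*$ and $M_0$ depending on $c_*, C_0'$. A concrete choice is a radial power-type profile, suitably truncated and mollified to keep $\varphi$ two-sidedly comparable to $r$. With this $\varphi$, define $\omega, \eta$ via \eqref{k11}--\eqref{k15}. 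Proposition~\ref{p1} then produces $N_0 = N_0(M_0)$ and $\tau = \tau(M_0)$, both independent of $\varepsilon < 1/M_0$, so that $\eta$ is a weak sup-solution of $\mathcal L$ in $B_r \times (0, \tau)$. Calibrating $\varphi$ and the initial time so that $\rho \leq \eta$ holds on the parabolic boundary (using the lower bound $P \geq c_*$ from Step~1 together with the exponential factor $e^{N_1 \varepsilon t}$), the comparison principle for $\mathcal L$ extends $\rho \leq \eta$ into the whole cylinder, namely
\[
\rho(x, s) \leq e^{N_1 \varepsilon s} \inf_{z \in B(x + r\varepsilon\mu,\, R_\varepsilon(x))} \rho(z, P_\varepsilon(s)) \quad\text{in } B_r \times (0, \tau).
\]
Read in contrapositive form, vanishing of $\rho$ at any $z$ in the slightly-later, slightly-shifted ball forces $\rho(x, s) = 0$; equivalently, interior positivity propagates forward in time into a whole ball shifted by $r\varepsilon \mu$. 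Iterating this propagation a bounded number of times (independent of $\varepsilon$ but depending on $\hat C, h, \beta$) from the initial positive patch of Step~1 transports positivity into an arbitrarily small neighborhood of $(x_0 - \varepsilon\mu, t_0 + C\varepsilon)$ for a suitable $C$ depending on $C_0', \theta, \hat C, h, \beta$ and the universal constants, giving $P(x_0 - \varepsilon\mu, t_0 + C\varepsilon) > 0$ for all $\varepsilon \in (0, \varepsilon_0)$.

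The main obstacle is the construction and calibration of $\varphi$: it must super-solve the nonlinear relation $\Delta \varphi \geq \sigma_1 |\nabla \varphi|^2/\varphi$ while remaining two-sidedly comparable to a single scale $r$, with Lipschitz constant uniformly bounded by $M_0$ independent of $\varepsilon$. A secondary difficulty is establishing $\rho \leq \eta$ on the parabolic boundary from the merely H\"older-type cone of Theorem~\ref{vhc}(2); this requires pairing the positivity lower bound $c_*$ with the exponential multiplier $e^{N_1\varepsilon t}$ and the explicit shift $r\varepsilon\mu$ built into the inf-convolution. The reaction term $\rho g(\rho)$ itself contributes only through the monotonicity of $g$, already exploited in the proof of Proposition~\ref{p1} via inequalities \eqref{k27}--\eqref{k28}, so no additional adaptation of the drifted-PME argument of \cite{KZ2021} is needed on that front.
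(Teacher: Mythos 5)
Your high-level plan --- combine the type-II structure of $\Gamma$ from Theorem~\ref{vhc} with the inf-convolution sup-solution of Proposition~\ref{p1} and close via the comparison principle --- is the same strategy the paper uses, but the specific setup you describe cannot close, for three independent reasons.

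\textbf{Time direction.} You start from a positive patch $P(\cdot,t_0+s_*)\geq c_*>0$ on $B(x_0,r_*)$ with $s_*\in(0,h)$ a \emph{fixed} small time, and then propagate positivity forward. But the target $P(x_0-\varepsilon\mu,\,t_0+C\varepsilon)>0$ lives at time $t_0+C\varepsilon$, which for small $\varepsilon$ is strictly \emph{earlier} than $t_0+s_*$; forward propagation from a fixed later time can never reach it. The paper instead invokes the \emph{vanishing} branch of Theorem~\ref{vhc}(2): it moves backward to $\hat t-t_\delta$, where $P\equiv 0$ on $B(\hat x,r_\delta)$, runs the comparison on $[\hat t-t_\delta,\hat t]$, and the conclusion at $\hat t+c\varepsilon$ comes only from the dilation $P_\varepsilon(t_\delta)=t_\delta+c\varepsilon$ built into \eqref{k15}, not from any forward iteration.

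\textbf{Spatial direction and the role of $\Phi$.} The inf-convolution in \eqref{k11}--\eqref{k12} shifts by $+r\varepsilon\mu$, so, as you correctly note, positivity propagates into the $+\mu$ direction --- yet the lemma requires positivity at $x_0-\varepsilon\mu$, the \emph{opposite} direction. The paper resolves this with precisely the feature your ``radial power-type profile, two-sidedly comparable to $r$'' leaves out: it sets $\varphi=r_\delta\Phi((x-E)/r_\delta)$ where $\Phi$ solves $\Delta(\Phi^{1-\sigma_1})=0$ on the annulus $B_{1/2}\setminus B_{\sin\theta/10}$ with a \emph{large} inner boundary value $A_{n,\theta}$ chosen so that $\Phi(y+\mu/5)\geq 3$ (condition \eqref{k29}). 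Near the free-boundary point this makes the inf-convolution ball of radius $\geq 3r_\delta\varepsilon$, which overshoots the $+r_\delta\varepsilon\mu$ shift and captures $-r_\delta\varepsilon\mu$ (this is \eqref{k36}). That overshoot is the single geometric step that produces the $-\varepsilon\mu$ in \eqref{k8}, and it is absent from your construction.

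\textbf{Boundary comparison.} You propose to establish $\rho\leq\eta$ on the parabolic boundary ``using the lower bound $P\geq c_*$ together with the exponential factor,'' but $\eta$ is an inf-convolution of $\rho$, which can only \emph{lower} values, and the prefactor $e^{N_1\varepsilon t}=1+O(\varepsilon)$ is nowhere near large enough to overcome a uniform drop. The paper instead proves $\omega\geq P$ on the parabolic boundary of the \emph{annular} cylinder $\Sigma=(B(E,r_\delta/2)\setminus B(E,r_\theta))\times[0,t_\delta]$ by three separate mechanisms: vanishing of $P$ at the initial time from \eqref{kk33}; vanishing on the inner cylinder from Theorem~\ref{vhc} combined with cone monotonicity (Lemma~\ref{conemon}); and on the outer cylinder cone monotonicity plus the Aronson--B\'enilan estimate $\partial_t P\geq -CP$, with $\sigma_3$ in $N_0$ chosen large enough to dominate the time dilation $P_\varepsilon(t)-t$ via \eqref{k40}. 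None of these three boundary estimates, nor the annular geometry that makes cone monotonicity applicable, appear in your proposal.
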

\begin{proof}
Let $\sigma_1$ be given in Lemma~\ref{inf-del}, and let $\Phi$ be the unique solution of
\begin{equation*}
\begin{cases}
\Delta (\Phi^{-\sigma_1+1})=0&\text{in }B_{1/2}\backslash B_{\sin\theta/10},\\
\Phi=A_{n,\theta}&\text{on }\partial B_{\sin\theta/10},\\
\Phi=\sin\theta& \text{on }\partial B_{1/2}.
\end{cases}
\end{equation*}
Here, $A_{n,\theta}$ is chosen sufficiently large such that
\begin{equation}\label{k29}
\Phi(y+\frac{\mu}{5})\geq 3\quad \text{for all }y\in B_{1/10}.
\end{equation}
Then, for some $M_0(\theta,n)\geq 1$,
\begin{equation*}
\frac{1}{M_0}\leq \Phi\leq M_0,\quad |\nabla\Phi|\leq M_0\quad\text{ in }B_{1/2}\backslash B_{\sin\theta/10}.
\end{equation*}

 Let $N_0,\tau$ be given in Proposition \ref{p1}, let $\hat{C}, h, \beta$ be given in Theorem \ref{vhc}, and fix any $(\hat{x},\hat{t})\in A_1\cap \Gamma$.

Let $\delta:=\delta(\theta, C_0)>0$ be determined later and define
\begin{align}
&t_{\delta}:=\min\{\tau, h, \delta\},\label{k30}\\
&r_{\delta}:=\min\{\hat{C}t_{\delta}^{\beta},\frac{1}{4}\}.\label{k31}
\end{align}
Due to Theorem \ref{vhc}, we have
\begin{equation}\label{k32}
\rho(x,\hat{t}-t_\delta)=0\quad\text{for }x\in B(\hat{x}, r_{\delta}).
\end{equation}
After translation, we assume $(\hat{x}, \hat{t}-t_\delta)=(0,0)$.
 Since $\mathcal{L}(\rho)=0$ weakly in $A_{1/2}$, where $\mathcal{L}$ is given by \eqref{para-oper}. It follows from \eqref{k32} that
 \begin{equation}\label{kk33}
 P(x,0)=0\quad \text{in }B_{r_\delta}.
 \end{equation}

  Let $E:=-\frac{r_\delta}{5}\mu$ and $\varphi:=r_{\delta}\Phi(\frac{x-E}{r_\delta})$, let $\omega$ be defined as in \eqref{k12} and $r:=r_\delta$,
 \begin{equation*}
 \omega(x,t):=e^{N_0\varepsilon t}\inf\limits_{B(x,\varepsilon\varphi)}P(y+r_\delta\varepsilon\mu, P_{\varepsilon}(t)).
 \end{equation*}
The cylindrical domain is denoted as
 \begin{equation}\label{k34}
 \Sigma:=(B(E,\frac{r_\delta}{2})\backslash B(E,r_{\theta}))\times[0,t_\delta],
 \end{equation}
 where $r_{\theta}:=\frac{r_\delta}{10}\sin\theta$.

 Our next aim is to show 
\begin{equation}\label{k35}
\omega\geq P\quad\text{in }\Sigma.
\end{equation}
To this end, it suffices to show that $\omega\geq v$ on the parabolic boundary of $\Sigma$ by means of the comparison principle.

  Due to  \eqref{kk33}, we have
  \begin{equation*}
\omega(x,0)\geq 0=P(x,0)\quad\text{ in }B(E,r_\delta/2).
  \end{equation*}
  Since $P(0,t_\delta)=0$ and due to Theorem~\ref{vhc}, we obtain
  \begin{equation*}
  P(0,t)=0\quad\text{for }t\in[0,t_\delta].
  \end{equation*}
Thanks to the cone monotonicity (Lemma~\ref{conemon}), it holds
\begin{equation*}
\omega\geq P=0 \quad\text{in }B(E,r_\theta)\times[0,t_\delta]\subset B(E,\frac{r_\delta}{5}\sin\theta)\times[0,t_\delta].
\end{equation*}
Hence to justify \eqref{k35}, it remains to show 
\begin{equation}\label{k41}
\omega\geq P\quad\text{ on }\partial B(E,\frac{r_\delta}{2})\times[0,t_\delta].
\end{equation}

By the definition of $\varphi$, we have $\varphi(x)=r_\delta\sin\theta$ on $\partial B(E,\frac{r_\delta}{2})$. From  \eqref{k18},  we have
\begin{equation}\label{k37}
\begin{aligned}
\omega(x,t)&= e^{N_0\varepsilon t}\inf\limits_{y\in B(x,\varepsilon\varphi)}P(y+r_\delta\varepsilon\mu, P_\varepsilon(t))\\
&=:e^{N_0\varepsilon t}V_1(x,t)\quad\text{on }\partial B(E, \frac{r_\delta}{2}).
\end{aligned}
\end{equation}
In view of the cone monotonicity (Lemma~\ref{conemon}), we have
\begin{equation}\label{k38}
\inf\limits_{y\in B(x,r_\delta\varepsilon\sin\theta)}P(y+r_\delta\varepsilon\mu,t)\geq P(x,t)\quad\text{in }A_1.
\end{equation}
It remains to show
\begin{equation}\label{k39}
e^{N_0\varepsilon t}V_1(x,\cdot)\geq \inf\limits_{B(\cdot,r_\delta\varepsilon\sin\theta)}P(y+r_\delta\varepsilon\mu,\cdot)\quad\text{on }\partial B(P,\frac{r_\delta}{2})\times[0,t_\delta].
\end{equation}

By the definition of $P_{\varepsilon}(t)$, we have
\begin{equation}\label{k40}
\begin{aligned}
P_\varepsilon(t)-t&=(1+\sigma_2M_0\varepsilon)(\frac{e^{N_0\varepsilon t}-1}{N_0\varepsilon})-t\\
&\leq (1+\sigma_2M_0\varepsilon)te^{ N_0\varepsilon t}-t\\
&\leq t[\sigma_2M_0\varepsilon+(1+\sigma_2M_0\varepsilon)(e^{N_0\varepsilon\delta}-1)]\\
&\leq t[\sigma_2M_0\varepsilon+(1+\sigma_2M_0\varepsilon)N_0\varepsilon\delta e^{N_0\varepsilon\delta}]\\
&=t\varepsilon[\sigma_2M_0+(1+\sigma_2M_0)N_0\delta e^{N_0\delta}].
\end{aligned}
\end{equation}

Due to Lemma~\ref{lAB}, there exits $C:=C_0'(m-1)$ such that
\begin{equation*}
\partial_t P\geq -CP\quad \text{for }(x,t)\in \mathbb{R}^n\times \mathbb{R}^+.
\end{equation*}
Then, it holds
\begin{equation*}
P(x,t)\geq e^{-C(t-t_0)}P(x,t_0)\quad \text{for any }t\geq t_0>0.
\end{equation*}
In view of \eqref{k37}, we drive
\begin{equation*}
\begin{aligned}
&\omega(x,t)\geq e^{N_0\varepsilon t}e^{-C(P_\varepsilon(t)-t)}\inf\limits_{B(x,r_\delta\varepsilon\sin\theta)}P(y+r_\delta\varepsilon\mu,t)\\
&\geq e^{\big(\sigma_3M_0(1+C_0)-C_0\sigma_2M_0(m-1)-(1+\sigma_2M_0)(m-1)N_0C_0\delta e^{N_0\delta}\big)\varepsilon t}\inf\limits_{B(x,r_\delta\varepsilon \sin\theta)}P(y+r_\delta\varepsilon\mu,t)
\end{aligned}
\end{equation*}
If we choose $\sigma_3>2C_0\sigma_2(m-1)$ in Proposition~\ref{p1}, there exists small $\delta'>0$ such that 
\begin{equation*}
\sigma_3M_0(1+C_0)-C_0\sigma_2M_0(m-1)-(1+\sigma_2M_0)(m-1)N_0C_0\delta' e^{N_0\delta'}\geq 0.
\end{equation*}
Hence, we fix $\delta=\delta'>0$ in~\eqref{k30} and take \eqref{k38} into consideration, then \eqref{k41} holds.

 We conclude with $\omega\geq P$ on the parabolic boundary of $\Sigma$, and the claim \eqref{k35} holds by the comparison principle.

   Using \eqref{k29}, we have
   \begin{equation*}
   \varphi(x)=r_{\delta}\Phi(\frac{x}{r_\delta}+\frac{\mu}{5})\geq 3r_\delta\quad \text{for } x\in B_{r_\delta/10}(0).
   \end{equation*}
From this, it follows
\begin{equation}\label{k36}
-r_\delta\varepsilon\mu\in B(x,\frac{12}{5}r_\delta\varepsilon)+r_\delta\varepsilon\mu\subset B(x,\varepsilon\varphi)+r_\delta\varepsilon\mu\text{ for all }|x|\leq \frac{r_\delta\varepsilon}{5}\leq \frac{r_\delta}{10}.
\end{equation}
Thanks to \eqref{k35} and the definition of $\omega$, we get for $|x|\leq \frac{r_{\delta}\varepsilon}{5}$,
\begin{align*}
e^{N_0\varepsilon t_\delta}P(-r_{\delta}\varepsilon\mu, P_{\varepsilon}(t_\delta))&\geq e^{N_0\varepsilon t_\delta}\inf\limits_{B(x,\varepsilon\varphi)}P(y+r_{\delta}\varepsilon\mu, P_{\varepsilon}(t_\delta))\\
&= w(x,t_\delta)\geq P(x,t_\delta).
\end{align*}
From \eqref{k15}, it follows that $P_{\varepsilon}=t_\delta+c\varepsilon$ for some $c=c(t_\delta, \delta)$ independent of $\varepsilon$. Thus
\begin{equation*}
P(-r_\delta\varepsilon\mu, t_\delta+c\varepsilon)\geq e^{-N_0\varepsilon t_\delta}\sup\limits_{|x|\leq r_\delta\varepsilon/5}P(x,t_\delta).
\end{equation*}
Recall that $(0,t_\delta)=(\hat{x},\hat{t})\in \Gamma$. We have
\begin{equation*}
P(-r_\delta\varepsilon\mu+\hat{x}, \hat{t}+c\varepsilon)>0.
\end{equation*}
Hence, it holds
\begin{equation*}
P(\cdot-r_\delta\varepsilon\mu,\cdot+c\varepsilon)>0\quad \text{on }\Gamma\cap A_1,
\end{equation*}
and the proof is completed.
\end{proof}
Inspired by~\cite{KZ2021}, the following proposition shows that the pressure $P$ for the Cauchy problem~\eqref{de} grows faster than a  linear growth near the free boundary.
\begin{proposition}\label{lindegs}
Under the initial assumptions of Lemma~\ref{non-dege}, there exists $\varepsilon_0,\ \kappa_*$ depending on $C_0',\ \theta,\ \hat{C},\ h$, and universal constants such that, for all $\varepsilon\in(0,\varepsilon_0)$, it holds
\begin{equation}\label{k5}
P(x+\varepsilon\mu,t)\geq \kappa_*\varepsilon\quad \text{for }(x,t)\in\Gamma\cap A_1.
\end{equation}
\end{proposition}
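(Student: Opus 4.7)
I argue by contradiction. Suppose there exist $(x_0,t_0)\in \Gamma\cap A_1$ and $\varepsilon\in(0,\varepsilon_0)$ with $P(x_0+\varepsilon\mu,t_0)<\kappa_*\varepsilon$ for a small $\kappa_*>0$ to be specified later. The strategy is to propagate this pointwise smallness across an annular cylindrical domain using the inf-convolution sup-solution supplied by Proposition~\ref{p1}, and to then contradict the qualitative positivity established in Lemma~\ref{non-dege}.

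\textbf{Main steps.} The cone monotonicity (Lemma~\ref{conemon}) first extends the assumed smallness to the whole outward cone $W_{\theta,\mu}$ through $x_0+\varepsilon\mu$, so the initial trace of the barrier is already quantitatively small. I then reproduce the annular configuration used in the proof of Lemma~\ref{non-dege}, with spatial radius $r_\delta\sim\varepsilon$ and time horizon $t_\delta\sim\varepsilon$ chosen as in~\eqref{k30}--\eqref{k31}; applied to the smooth approximants $P_k$ from Lemma~\ref{smoothappro} with the auxiliary function $\varphi(y)=r_\delta\Phi((y-E)/r_\delta)$ satisfying~\eqref{k16}, Proposition~\ref{p1} yields a sup-solution $\omega$ of the pressure equation~\eqref{pe} on the annulus $\Sigma$. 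A boundary comparison paralleling~\eqref{k37}--\eqref{k41}, combined with the cone monotonicity, forces $\omega\geq P$ on the parabolic boundary of $\Sigma$, hence $\omega\geq P$ throughout $\Sigma$ by the comparison principle. The new quantitative feature beyond Lemma~\ref{non-dege} is that the initial trace of $\omega$ is now bounded by $C\kappa_*\varepsilon$, so that $\omega\leq C\kappa_*\varepsilon$ in $\Sigma$. Reading off $\omega$ at the target point $(x_0-r_\delta\varepsilon\mu,\,t_0+c\varepsilon)$ as in~\eqref{k36} yields the upper bound $P(x_0-r_\delta\varepsilon\mu,t_0+c\varepsilon)\leq C\kappa_*\varepsilon$. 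Iterating the same sup-solution construction at the geometric scales $\varepsilon,\varepsilon/2,\varepsilon/4,\ldots$ and combining with Lemma~\ref{non-dege} at each stage upgrades the qualitative positivity there into a quantitative lower bound $P(x_0-r_\delta\varepsilon\mu,t_0+c\varepsilon)\geq c'\varepsilon$. Choosing $\kappa_*<c'/C$ (with $\kappa_*$ depending only on $C_0',\theta,\hat C,h,\beta$ and universal constants) then produces the required contradiction.

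\textbf{Main obstacle.} The principal difficulty is to upgrade Lemma~\ref{non-dege} from qualitative positivity to a genuine linear lower bound $\geq c'\varepsilon$ at the target point, which is what ultimately drives the contradiction. I anticipate carrying this out either via the iterated barrier sketched above, or, alternatively, by direct comparison from below with a paraboloidal sub-solution of the form $\alpha(t)-\beta(t)|x|^2$ in the spirit of Theorem~\ref{t1}, suitably rescaled to the $\varepsilon$-neighborhood of $(x_0,t_0)$. The logistic reaction term $\rho g(\rho)$ is absorbed without extra cost by the substitution $v=e^{-(m-1)G(0)t}P$ from Lemma~\ref{subs2}, so that Proposition~\ref{p1}'s construction applies verbatim to the transformed problem and no additional analytic complication arises from the porous medium-logistic coupling.
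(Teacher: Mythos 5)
Your plan diverges substantially from the paper's argument, and it has two genuine gaps.

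First, the key ingredient in the paper's proof is Lemma~\ref{l18} (the vanishing alternative): if $P(\cdot,t_1)\equiv 0$ on a ball $B(\hat x, R)$ and the later average $\oint_{B(\hat x,R)}P(\cdot,\hat t)\,dx$ is below a threshold $c_0 R^2/(\hat t - t_1)$, then $P(\cdot,\hat t)\equiv 0$ on $B(\hat x, R/6)$. The paper argues by contradiction: assuming the (sup-version of the) lower bound fails at $(\hat x,\hat t)\in\Gamma$, it uses Theorem~\ref{vhc}, cone monotonicity, and Lemma~\ref{non-dege} only to locate the boundary at a slightly earlier time $t_1=\hat t-C\varepsilon$ and to establish $P(\cdot,t_1)=0$ in $B(\hat x, R)$ with $R=\varepsilon\sin\theta/2$; then the assumed smallness of the average triggers Lemma~\ref{l18}, forcing $P(\cdot,\hat t)\equiv 0$ near $\hat x$, which contradicts $(\hat x,\hat t)\in\Gamma$. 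Your proposal never invokes Lemma~\ref{l18}; instead you try to extract a quantitative lower bound $P\gtrsim\varepsilon$ at the target point by iterating the inf-convolution barrier. You correctly identify this upgrade as ``the principal difficulty,'' but you leave it unresolved, and the iteration sketch does not obviously close: Lemma~\ref{non-dege} only yields strict positivity, and nothing in the construction gives a rate without precisely the kind of non-degeneracy input that Lemma~\ref{l18} supplies.

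Second, the step ``the initial trace of $\omega$ is bounded by $C\kappa_*\varepsilon$, so $\omega\leq C\kappa_*\varepsilon$ in $\Sigma$'' has the comparison direction wrong. In Proposition~\ref{p1}, $\omega$ is a \emph{sup}-solution built as an inf-convolution of $P$ at shifted points, and the comparison with $P$ yields a lower bound ($\omega\geq P$), not an upper bound. A sup-solution with small parabolic-boundary data is not forced to be small inside the domain; that maximum-principle conclusion would require $\omega$ to be a sub-solution, or an independent pointwise upper bound on $P$ over the shifted inf-convolution set, which you do not have. As written, this step does not produce the claimed upper bound $P(x_0-r_\delta\varepsilon\mu,t_0+c\varepsilon)\leq C\kappa_*\varepsilon$. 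Finally, the paper also proves the claim first for $\sup_{y\in B(x,\varepsilon)}P(y,t)\geq\kappa\varepsilon$ and then converts to the directional statement $P(x+\varepsilon\mu,t)\geq\kappa_*\varepsilon$ via cone monotonicity (choosing $\gamma$ with $B(\mu,\gamma)\subset W_{\theta,\mu}$); this conversion step is also absent from your plan, though it is straightforward once the sup-version is established.
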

\begin{proof}
Let $c_0$ be from Lemma \ref{l18}, C be from Lemma~\ref{non-dege}, and set $\kappa:=\frac{c_0\sin^2\theta}{4C}$. We first prove that, for all $0<\varepsilon<\varepsilon_0$,
\begin{equation}\label{k1}
\sup\limits_{y\in B(x,\varepsilon)}P(y,t)\geq\kappa\varepsilon\quad \text{for all } (x,t)\in\Gamma\cap A_1.
\end{equation}

We argue by contradiction. Suppose that the above claim~\eqref{k1} is false. Then, for any $\varepsilon_0>0$, there exist $\varepsilon\in(0,\varepsilon_0]$ and $(\hat{x},\hat{t})\in\Gamma\cap Q_1$ such that \eqref{k1} fails.

Set $t_1:=\hat{t}-C\varepsilon$, since the positive set of $P$ is strictly expansive as the time increases (Theorem~\ref{vhc}), we have
\begin{equation*}
P(x,\hat{t})>0,\quad\text{for}\ x\in A_1\cap\Gamma_{t_1}.
\end{equation*}
Using the cone monotonicity (Lemma~\ref{conemon}) and the fact that $P(\hat{x},\hat{t})=0$, it follows
\begin{equation*}
(\hat{x}+\mathbb{R}^{+}\mu)\cap\Gamma_{t_1}\neq \varnothing.
 \end{equation*}
 Therefore, there exists $x_1\in \Gamma_{t_1}$ such that
\begin{equation}\label{k2}
x_1=\hat{x}+C_1\varepsilon \mu\quad \text{for some } C_1>0.
\end{equation}
Due to the cone monotonicity (Lemma~\ref{conemon}), we have
\begin{equation}\label{k3}
d(x_1-c\varepsilon\mu,\Gamma_{t_1})\geq \frac{c\varepsilon\sin\theta}{2}\quad\text{for all }c\geq0.
\end{equation}
In view of Lemma~\ref{non-dege}, it holds for sufficiently small $\varepsilon>0$ that
\begin{equation*}
P(x_1-\varepsilon\mu,t_1+C\varepsilon)>0.
\end{equation*}
Therefore, combining with the fact that
\begin{equation*}
P(x_1-\varepsilon C_1\mu,t_1+C\varepsilon)=P(\hat{x},\hat{t})=0,
\end{equation*}
we obtain
\begin{equation}\label{c1}
C_1>1.
\end{equation}

Using \eqref{k2}-\eqref{k3} and~\eqref{c1}, if $0<\varepsilon\leq\varepsilon_0$ is sufficiently small compared with $C$, it follows
\begin{equation*}
d(\hat{x},\Gamma_{t_1})\geq \frac{C_1\varepsilon\sin\theta}{2}\geq \frac{\varepsilon\sin\theta}{2}:=R,
\end{equation*}
which yields
\begin{equation}\label{kk3}
P(\cdot,t_1)=0\text{ in } B(\hat{x},R).
\end{equation}

Note that $t_1+C\varepsilon=\hat{t}$, the failure of \eqref{k1} implies
\begin{equation}\label{k4}
\oint_{B(\hat{x},R)}P(x,\hat{t})dx\leq \kappa\varepsilon=\frac{c_0R^2}{C\varepsilon},
\end{equation}
where we used $\kappa=\frac{c_0\sin^2\theta}{4C}$ in the last inequality.

With \eqref{kk3} and \eqref{k4} at hands, we are able to apply Lemma \ref{l18} and get
\begin{equation*}
P(x,\hat{t})=0\quad \text{in}\ B(\hat{x},R/6),
\end{equation*}
which contradicts with $(\hat{x},\hat{t})\in \Gamma$.  

We turn to justify that~\eqref{k1} holds. The left of proof is parallel to~\cite[Corollary 5.7]{KZ2021} and we sketch the proof. Let us take small  $\gamma\in(0,1)$ depending only on $\theta$ such that $B(\mu,\gamma)\subset W_{\theta,\mu}$.
Fix any $(x,t)\in\Gamma\cap A_1$ and set $\kappa_*:=\kappa\gamma$, we conclude as in~\cite[Corollary 5.7]{KZ2021} that
\begin{equation*}
P(x+\varepsilon\mu,t)\geq\kappa_*\varepsilon\quad \text{for any }(x,t)\in \Gamma\cap A_1 \text{ and } \varepsilon\in(0,\varepsilon_0],
\end{equation*}
and the proof is completed.
\end{proof}
In the following lemma, we give the linear degeneracy  estimate of the weak solution to the Cauchy problem~\eqref{de} near the free boundary. The $n+1$-dimensional unit vector is given by 
\begin{equation*}
\hat{\mu}:=\frac{1}{\sqrt{2}}<\mu,e_{n+1}>,
\end{equation*}
where $e_{n+1}$ is the unit vector of $(n+1)th$-direction.
\begin{lemma}[Degeneracy rate]\label{deg-rate}Under the initial assumptions of Lemma~\ref{non-dege}, we set $(0,0)\in\Gamma$ without loss of generality, then $\Gamma\cap A_1$ is a n-dimensional Lipschitz continuous surface. Furthermore, there exist $0<\delta_2<\frac{1}{2}$ and $c_2>0$ such that
\begin{equation}\label{lin-degenerate}
\hat{\nabla}_{\hat{\mu}} P\geq c_2\quad\text{in }A_{\delta_2}\cap \Omega,
\end{equation}
where $\hat{\nabla}:=<\partial_t,\nabla>$.
\end{lemma}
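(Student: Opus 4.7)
\medskip

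\noindent\textbf{Proof proposal.} The statement has two components that must be obtained in tandem: the Lipschitz regularity of $\Gamma\cap A_1$ as an $n$-dimensional space-time surface, and the strict positivity of the directional derivative $\hat{\nabla}_{\hat\mu}P$ in the spatial-temporal cone direction. My plan is to establish a space-time cone monotonicity for $P$, upgrade the pointwise non-degeneracy of Proposition~\ref{lindegs} into a genuine derivative lower bound via a Harnack/barrier argument on the linearised equation, and then read off the Lipschitz graph representation of $\Gamma$ from this space-time monotonicity.

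\smallskip

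\noindent\emph{Step 1 (space-time cone monotonicity).} Spatially, Lemma~\ref{conemon} already furnishes $\nabla_\mu P \geq 0$ in the cone $W_{\theta,\mu}$ after a suitable localisation (a rotation makes the unit axis $\mu$ consistent on $A_1$). Temporally, Lemma~\ref{dgg} combined with the Aronson-B\'enilan estimate (Lemma~\ref{lAB}) gives $\partial_t(tP)\ge0$, hence $\partial_tP\ge -P/t\ge 0$ up to a harmless shift once $t\ge T_0$ and $P$ is bounded below by Proposition~\ref{lta}. Combining the two monotonicities yields
\[
P(x+\varepsilon\mu,\, t+\varepsilon)\geq P(x,t)\qquad\text{for all }(x,t)\in A_1\text{ and small }\varepsilon>0.
\]
This is precisely the statement that $P$ is monotone in the space-time direction $\hat\mu=\tfrac{1}{\sqrt 2}\langle\mu,e_{n+1}\rangle$.

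\smallskip

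\noindent\emph{Step 2 (boundary non-degeneracy in space-time).} Proposition~\ref{lindegs} gives $P(x+\varepsilon\mu,t)\ge \kappa_*\varepsilon$ for $(x,t)\in\Gamma\cap A_1$. To obtain the analogous time version $P(x,t+\varepsilon)\ge \kappa_*'\varepsilon$ at free boundary points, I plan to use Theorem~\ref{vhc} (type-II vertical lines under the initial data \eqref{i1}-\eqref{i2}): the strict expansion of $\Omega(t)$ together with the Lipschitz bound of Theorem~\ref{t3} means the free boundary advances with a finite, positive speed, so the same barrier argument of Lemma~\ref{non-dege} applied to $P(\cdot,t+\varepsilon)$ yields a linear-in-$\varepsilon$ lower bound from below. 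Summing the two inequalities and using $\hat\mu$'s definition gives
\[
P(x+\varepsilon\mu,\, t+\varepsilon)\geq c\,\varepsilon\quad\text{at every }(x,t)\in\Gamma\cap A_1.
\]

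\smallskip

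\noindent\emph{Step 3 (interior derivative bound via linearisation).} Define $w_\varepsilon(x,t):=\varepsilon^{-1}\bigl(P(x+\varepsilon\mu,t+\varepsilon)-P(x,t)\bigr)$. By Step~1, $w_\varepsilon\ge 0$ on $A_1$, by Step~2, $w_\varepsilon\ge c>0$ on $\Gamma\cap A_1$, and $w_\varepsilon$ solves (modulo lower-order terms controlled by $G'$ and the Lipschitz bound of Theorem~\ref{t3}) the linearisation of the pressure equation~\eqref{pe}, which is a uniformly parabolic equation on $\Omega\cap A_{\delta'}$ once the Lipschitz bounds $|\nabla P|,|\partial_tP|\le L$ and the semi-concavity $\Delta P\ge -C_0'$ of Theorem~\ref{t3} are invoked on a set where $P$ is bounded from below. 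Applying the parabolic Harnack inequality to $w_\varepsilon$ in a shrunken cylinder $A_{\delta_2}$ inside $\Omega$ and letting $\varepsilon\downarrow 0$ produces
\[
\hat\nabla_{\hat\mu}P = \tfrac{1}{\sqrt2}\bigl(\nabla_\mu P+\partial_tP\bigr)\ge c_2>0\quad\text{in }A_{\delta_2}\cap\Omega.
\]

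\smallskip

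\noindent\emph{Step 4 (Lipschitz graph).} The space-time monotonicity of $P$ in every direction of the cone $\hat W_{\theta,\hat\mu}$ (obtained by rotating $\hat\mu$ slightly) together with the strict expansion $\Omega(t)\subset\Omega(t+s)$ shows that $\Gamma\cap A_1$ is trapped between two translates of the same space-time cone at every point. A standard covering argument then realises $\Gamma\cap A_1$ as the graph of a Lipschitz function over the hyperplane orthogonal to $\hat\mu$, with Lipschitz constant controlled by $\tan(\pi/2-\theta/2)$.

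\smallskip

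\noindent\emph{Expected obstacle.} The delicate step is Step~3: the linearisation degenerates as one approaches $\Gamma$ (because the coefficient $(m-1)P$ in front of $\Delta$ vanishes), so the classical Krylov-Safonov Harnack inequality is not directly applicable. I expect to handle this by staying strictly inside $\Omega$ at distance $\sim\delta_2$ from $\Gamma$, where the non-degeneracy $P\ge\kappa_*\delta_2$ from Proposition~\ref{lindegs} restores uniform parabolicity, and by then letting $\delta_2$ be absorbed into the universal constant. The time non-degeneracy at $\Gamma$ in Step~2 is the other subtle point: it must be adapted from the spatial proof of Proposition~\ref{lindegs} using Darcy's law~\eqref{dracy} and the barriers built in Lemma~\ref{non-dege}.
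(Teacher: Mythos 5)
Your high-level strategy (cone monotonicity plus a Harnack argument on the linearization) is in the right spirit, but there are genuine gaps and one outright error that must be corrected.

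\textbf{(1) The claim $\partial_t P\ge 0$ is wrong.} From Lemma~\ref{lAB} and Lemma~\ref{dgg} you only get $\partial_t(tP)\ge 0$, i.e.\ $\partial_t P\ge -P/t$, which is a \emph{negative} lower bound; the set monotonicity $\Omega(t)\subset\Omega(t+s)$ does not imply pointwise monotonicity of $P$ in time. The paper therefore does \emph{not} try to prove spatial and temporal non-degeneracy independently. Instead it first establishes the spatial bound $\nabla_\mu P\ge c_1$ (Step~2 of the paper's proof), and only then reads the temporal bound off the pressure equation~\eqref{pe}:
\begin{equation*}
\partial_t P \ge -C_0 P + |\nabla P|^2 \ge -\delta_2 L C_0 + c_1^2 \ge c_2
\end{equation*}
near the boundary, where $P$ is small but $|\nabla P|^2\ge c_1^2$. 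This ordering is essential, and your Step~2 attempt to deduce a separate ``time non-degeneracy at $\Gamma$'' from Theorem~\ref{vhc} has no support: Theorem~\ref{vhc} only gives a H\"older-type rate $|x-x_0|\le\hat C s^\beta$, not a linear one.

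\textbf{(2) The Harnack step is not usable as stated.} You propose to apply Harnack to $w_\varepsilon$ ``in a shrunken cylinder $A_{\delta_2}$ inside $\Omega$,'' but $A_{\delta_2}$ centered at $(0,0)\in\Gamma$ is not contained in $\Omega$; half of it lies in the zero set where the equation degenerates. Moreover, you take as Harnack data $w_\varepsilon\ge c$ \emph{on $\Gamma$}, which is not an interior reference point. The paper's Step~2 circumvents both issues with a pointwise blow-up: for each $(\hat x,\hat t)\in\Omega\cap A_{\delta_1}$ at distance $h$ from $\Gamma$, it goes back in time to $s=\hat t-h$, uses the divergence theorem together with Proposition~\ref{lindegs} to find an \emph{interior} point $z$ with $d(z,\partial\Omega)\gtrsim h$ and $\nabla_\mu P(z,s)\ge\kappa/2$, then rescales by $h$ so that the rescaled coefficient $P_h\ge\kappa_*\gamma/2$ is uniformly elliptic on Lipschitz domains $\Sigma_1\subset\Sigma_2$, and applies Harnack in that rescaled geometry to propagate the bound to $(\hat x,\hat t)$. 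It also first renormalizes $\phi=\nabla_\mu P$ to $\tilde\phi=e^{(m-1)(C_0+K_G)t}\phi$ so that the $G'(P)$ zeroth-order term drops out before Harnack; your ``modulo lower-order terms controlled by $G'$'' needs this explicit transformation and the hypothesis~\eqref{KG} to work. Without these ingredients, ``staying at distance $\delta_2$ and absorbing $\delta_2$'' is not a proof, because the Harnack constant then blows up as one lets $\delta_2\downarrow 0$ to approach the boundary.

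In summary: the key idea the proposal misses is that the paper gets the time derivative bound \emph{for free} from the spatial one via the PDE, and that the Harnack argument must be run after a distance-dependent blow-up so that it is applied to a uniformly parabolic operator on Lipschitz domains, not on a fixed cylinder straddling $\Gamma$.
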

\begin{proof}
 
 \emph{Step~1.}  On account of Lipschitz's continuity (Lemmas~\ref{lip1}-\ref{lip2}) and being parallel to \cite[Lemma 6.2]{KZ2021}, it is easy to prove that $\Gamma\cap A_1$ is a $n$-dimensional locally time-space Lipschitz continuous surface.

\noindent\emph{Step~2.} We turn to \eqref{lin-degenerate} and first show that there exist $ c_1>0$ such that
\begin{equation}\label{space-lin-deg}
\nabla_{\mu} P\geq c_1\quad\text{in }A_{\delta_1}\cap \Omega.
\end{equation}

The approach mainly comes from~\cite[Thorem 6]{CVW1987}, that is to use the method of Harnack's inequality of the uniform parabolic equation in the positive set. 

 Let $C$ be the Lipschitz constant of the surface $\Gamma \cap A_{\frac{1}{2}}$ and $C_1=C+1$.  We pick 
$(\hat{x},\hat{t})\in \Omega\cap A_{\delta_1}$ with $d(\hat{x},\Gamma_{\hat{t}})=h<\delta_1$ and small $\delta_1$ being determined later.  Set $s=\hat{t}-h$, then it holds 
\begin{equation*}
d(\hat{x},\Gamma_{\hat{t}-h})\leq Ch.
\end{equation*}
Furthermore, there exists $(y,s)\in\Gamma $ such that $d(\hat{x},\Gamma_{s})=d(\hat{x},y)\leq Ch$, which means that $B(y,h)\subset B(\hat{x},C_1h)$.

Since $\Gamma_t\cap B_{\delta_1}$  for spatial variable is a Lipschitz's surface in the $\mu$-direction (Lemma~\ref{conemon}), $|\partial B(y,h)\cap \{P>0\}|_{n-1}\geq C'h^{n-1}$ with $C'>0$ independent of $h$.  Thanks to the cone monotonicity in $W_{\theta,\mu}$ and Proposition~\ref{lindegs}, it holds by the divergence theorem that there exists $\delta_1>0$ depending only $\varepsilon_0$ such that
\begin{equation*}
\oint_{B(y,h)\cap \{P>0\}}\nabla_{\mu} P(x,s)dx\geq\frac{\sigma}{h}\oint_{\partial B(y,h)\cap\{P>0\}} P(x,s)\mu\cdot \nu_{x}dx\geq \kappa,
\end{equation*}
where $\kappa$ only depends on $\kappa_*$ and $C_1$,  and $\sigma$ is the universal constant. 

Let \begin{equation*}
\Omega^{r}:=\{(x,t)\in \Omega,\ d((x,t),\partial\Omega)\geq r\},
\end{equation*}
 thus there exists $\gamma\in(0,1)$ only depending on $\kappa$ such that 
\begin{equation*}
\oint_{B(y,h)\cap \Omega^{\gamma h}}\nabla_{\mu}P(x,s)dx\geq\frac{\kappa}{2}.
\end{equation*}
Therefore, it holds by the  mean value theorem of integrals  that there exists 
\begin{equation*}
z\in B(y,h)\cap \Omega^{\gamma h}\subset B(\hat{x},C_1 h)\cap \Omega^{\gamma h} 
\end{equation*}
such that 
\begin{equation*}
\nabla_{\mu} P(z,s)\geq \frac{\kappa}{2}.
\end{equation*}

Due to the cone monotonicity (Lemma~\ref{conemon}), $\phi=\nabla_{\mu}P\geq0$ in $\Omega\cap A_1$ holds. We take the spatial directional derivative operator $\nabla_{\mu}$ action on the pressure equation~\eqref{pe} and obtain
\begin{equation}\label{harnack1}
\begin{aligned}
\partial_t\phi&=(m-1)\phi(\Delta P+G(P))+(m-1)P\Delta\phi+(m-1)PG'(P)\phi+2\nabla P\cdot\nabla\phi\\
&\geq (m-1)P\Delta\phi+2\nabla P\cdot\nabla\phi-(m-1)(C_0+K_G)\phi,
\end{aligned}
\end{equation}
where $K_G$ is defined by~\eqref{KG}. By means of~\eqref{harnack1}, we define a new variable $\tilde{\phi}:=e^{(m-1)(C_0+K_G)t}\phi$ which satisfies the following uniform parabolic inequality:
\begin{equation}\label{harnack2}
\partial_t\tilde{\phi}\geq (m-1)P\Delta\tilde{\phi}+2\nabla P\cdot\nabla\tilde{\phi}.
\end{equation}

 We define 
 \begin{align*}
 &\Sigma_1^{h}:=\Omega^{\gamma h}\cap(B(\hat{x},C_1h)\cap (-h+\hat{t},\hat{t})),\\
 &\Sigma_2^{h}:=\Omega^{\frac{\gamma h}{2}}\cap(B(\hat{x},2C_1h)\cap (-2h+\hat{t},\hat{t})),
 \end{align*}
 it is easy to verify 
 \begin{align*}
 &(\hat{x},\hat{t}), (z,s)\in \Sigma_1^h\subset \Sigma_2^h,\\
 & P(x,t)\geq \frac{\kappa_*\gamma h}{2}\quad\text{for any }(x,t)\in \Sigma_2^h,
 \end{align*}
  where the second line is derived by~\eqref{k5}.
  
 Set $\omega(x,t):=\tilde{\phi}(\hat{x}+hx,s+ht)$,  it follows form $h=\hat{t}-s$ that 
 \begin{equation*}
 \omega(0,1)=\tilde{\phi}(\hat{x},\hat{t})\quad \text{and }\omega(z',0)=\tilde{\phi}(z,s)\quad\text{for }z':=\frac{z-\hat{x}}{h}.
 \end{equation*}
 Denote
\begin{equation*}
\Sigma_1:=\frac{\Sigma_1^h-(\hat{x},s)}{h},\quad \Sigma_2:=\frac{\Sigma_2^h-(\hat{x},s)}{h},
\end{equation*}
$\Sigma_1,\Sigma_2$ are Lipschitz's regions with Lipschitz constant depending only on $C,\sigma$.
 
 In addition, we conclude 
 \begin{align*}
 &(0,1),(z',0)\in \Sigma_1\subset\Sigma_2,\\
 &\Sigma_1+B_{\frac{\gamma}{2}}\subset \Sigma_2.
 \end{align*} 

From~\eqref{harnack2}, we have
\begin{equation*}
\partial_t\omega \geq (m-1)P_h\Delta\omega+2\nabla P_h\cdot\nabla\omega\quad\text{in }\Sigma_2,
\end{equation*}
 where 
 \begin{equation*}
 P_h(x,t)=\frac{P(\hat{x}+hx,s+ht)}{h}\geq \frac{\kappa_*\gamma}{2}\quad\text{in }\Sigma_2.
 \end{equation*}
 Due to Harnack's inequality,  there exists $c$ dependning only on $\kappa_*,\theta,C_1$ such that 
 \begin{equation*}
 \omega(0,1)\geq c \omega(z',0).
 \end{equation*}
By the definition of $\omega$, we obtian
\begin{equation*}
\tilde{\phi}(\hat{x},\hat{t})\geq c\tilde{\phi}(z,s).
\end{equation*}
Hence, by the definition of~$\tilde{\phi}$, it follows
\begin{equation*}
\phi(\hat{x},\hat{t})\geq ce^{(m-1)(C_0+K_G)(s-\hat{t})}\phi(z,s)\geq \frac{c\kappa e^{-(m-1)(C_0+K_G)}}{2}.
\end{equation*}

In conclusion, we set $c_1:=\frac{c\kappa e^{-(m-1)(C_0+K_G)}}{2}$, \eqref{space-lin-deg} holds.

\noindent\emph{Step 3.} We turn to the lower bound of the time derivative near the boundary.
Due to the pressure equation~\eqref{pe}, we have 
\begin{equation}\label{time-lin-degenerate}
\partial_tP(x,t)\geq -P(x,t) C_0+\nabla P\cdot\nabla P
\geq-\delta_2 L C_0+c_1^2\geq c_2,
\end{equation}
where $c_2:=\min\{\frac{c_1^2}{2},c_1\}$ and $\delta_2:=\min\{\frac{c_1^2}{2LC_0},\delta_1\}$.

Combining~\eqref{space-lin-deg} and~\eqref{time-lin-degenerate}, then \eqref{lin-degenerate} holds for any $(x,t)\in A_{\delta_2}\cap\Omega$ as
\begin{equation*}
\hat{\nabla}_{\hat{\mu}}P= \frac{1}{\sqrt{2}}\nabla_{\mu}P+\frac{1}{\sqrt{2}}\partial_t P\geq c_2,
\end{equation*} 
and the proof is completed.
\end{proof}

\paragraph{\large $C^{1,\alpha}$ regularity of free boundary.}
Without loss of generality, we set $(0,0)\in \Gamma$.   To deal with the difficulties caused by the source term of the porous medium type reaction-diffusion equation~\eqref{de}, we  introduce the following scaling: 
\begin{equation*}
P_{\delta_3}(x,t)=\frac{1}{\delta_3}P(\delta_3x,\delta_3t)\quad\text{in }A_1
\end{equation*} 
with $\delta_3\in (0, \delta_2)$ determined later, which satisfies 
\begin{equation*}\label{P_delta}
\partial_t P_{\delta_3}=(m-1)P_{\delta_3}\Delta P_{\delta_3}+\nabla P_{\delta_3}\cdot\nabla P_{\delta_3}+(m-1)\delta_3P_{\delta_3}G(\delta_3P_{\delta_3})\quad\text{in }A_1.
\end{equation*}
Form Lemma~\ref{deg-rate} and Theorem~\ref{t3}, there exists a constant $L>0$ such that
\begin{equation}\label{est-delta}
\nabla_{\mu}P_{\delta_3}, \partial_t P_{\delta_3}\geq c_2, \quad\Delta P_{\delta_3}\geq -\delta_3C_0',\quad  |P_{\delta_3},\  \nabla P_{\delta_3},\ \partial _tP_{\delta_3}|\leq L,\quad\text{in }A_1\cap \{P_{\delta_3}>0\}.
\end{equation}

To prove the $C^{1,\alpha}$ regularity of free boundary,   we introduce iteration function $P_k$ as  
\begin{equation}\label{P_k}
P_k(x,t):=\frac{1}{J^k}P_{\delta_3}(J^kx,J^kt)\quad\text{in }A_1
\end{equation}
with $J\in(0,1)$ determined later,  which is a solution to the following equation:
\begin{equation}\label{P_kE}
\partial_t P_{k}=(m-1)P_{k}\Delta P_{k}+\nabla P_{k}\cdot\nabla P_{k}+(m-1)J^k\delta_3P_{k}G(J^k\delta_3P_{k})\quad\text{in }A_1\text{ and }k\in\mathbb{N}.
\end{equation}
Thanks to~\eqref{est-delta}, we have
\begin{equation}\label{est-k}
\nabla_{\mu}P_{k},\partial_t P_k\geq c_2, \quad \Delta P_{k}\geq -\delta_3J^kC_0',\quad |P_{k},\ \nabla P_{k},\ \partial _tP_{k}|\leq L,\quad\text{in }A_1\cap \{P_{k}>0\}.
\end{equation}

As initiated in~\cite{CW1990}, the $C^{1,\alpha}$ regularity of free boundary comes true under the help of the following three lemmas, the main idea is to propagate the regularity form the positive parts to the  boundary.
\begin{lemma}\label{bound-data}
For $J\in(0,1)$ and $k\geq 1$, let $P_k$ be given in~\eqref{P_k}, and suppose that $P_k$ satisfies~\eqref{est-k}. For any $\varepsilon\in (0,1)$, then there exist $r\leq\frac{\varepsilon}{2L}$ and $C\leq 1$ depending on $\varepsilon, L,\sigma$ such that 
\begin{equation}\label{P_k-eps}
P_k\leq \varepsilon \quad \text{in }B_{2r}\times[-2r,2r].
\end{equation} 
Moreover, for any $\gamma\in(0,\varepsilon)$, $p\in \hat{W}_{\hat{\mu}_k,\theta_k}\cap\textbf{S}^{n}$ and $\tau:=C\varepsilon^{-1}\cos<p,\hat{\nabla}P_k(\mu,-2r)>$, we have
\begin{equation}\label{P_k-ga}
P_k((x,t)+\gamma p)\geq (1+\gamma\tau)P_k(x,t)\quad\text{on }B_{3/4}\times [-2r,2r]\cap \{P_k=\varepsilon\}.
\end{equation}
\end{lemma}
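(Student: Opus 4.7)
The plan is to handle the two claims separately, with \eqref{P_k-ga} being the substantive one. For \eqref{P_k-eps}, the key input is that $(0,0)\in\Gamma$ implies $P_k(0,0)=0$; combining this with the Lipschitz bounds $|\nabla P_k|,|\partial_tP_k|\leq L$ from \eqref{est-k} yields $P_k(x,t)\leq L(|x|+|t|)\leq 4Lr$ on $B_{2r}\times[-2r,2r]$, so any $r$ of order $\varepsilon/L$ produces the desired bound $P_k\leq\varepsilon$ throughout the cylinder.

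For the monotonicity inequality \eqref{P_k-ga}, I would begin from the fundamental theorem of calculus along the segment joining $(x,t)$ to $(x,t)+\gamma p$:
$$P_k((x,t)+\gamma p)-P_k(x,t)=\int_0^\gamma \hat{\nabla}_p P_k\bigl((x,t)+sp\bigr)\,ds.$$
Because $p$ lies in the space-time cone of monotonicity $\hat{W}_{\hat{\mu}_k,\theta_k}$, the pressure can only grow along the segment, so the entire path stays in $\{P_k\geq\varepsilon\}$. On this set Lemma \ref{deg-rate} supplies the linear non-degeneracy $|\hat{\nabla}P_k|\geq c_2>0$, while \eqref{est-k} gives $|\hat{\nabla}P_k|\leq\sqrt{2}L$; in particular the unit direction $\nu_k:=\hat{\nabla}P_k/|\hat{\nabla}P_k|$ is well defined, and by standard interior Schauder/Hölder estimates applied to \eqref{P_kE} (which is uniformly parabolic on $\{P_k\geq\varepsilon\}$ since its leading coefficient $P_k$ is bounded away from zero) the direction $\nu_k$ varies continuously across $B_{3/4}\times[-2r,2r]$ with a modulus independent of the base point. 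Shrinking $r$ enough that this oscillation of $\nu_k$ is dominated by the aperture $\theta_k$, I would then pass from the inner-product identity $\hat{\nabla}_p P_k=|\hat{\nabla}P_k|\cos\langle p,\nu_k\rangle$ to a uniform lower bound $\hat{\nabla}_p P_k((x,t)+sp)\geq (c_2/2)\cos\langle p,\hat{\nabla}P_k(\mu,-2r)\rangle$ along the segment. Integrating over $s\in[0,\gamma]$ and dividing by $P_k(x,t)=\varepsilon$ produces the stated inequality with $\tau=C\varepsilon^{-1}\cos\langle p,\hat{\nabla}P_k(\mu,-2r)\rangle$ and $C=c_2/2$ (taken further $\leq 1$ if necessary), while the freedom in $r$ absorbs the continuity modulus of $\nu_k$.

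The main obstacle is making all constants independent of $k$. The source term in \eqref{P_kE} scales as $J^k\delta_3 P_k G(J^k\delta_3 P_k)$ and vanishes as $k\to\infty$, while the Aronson–Bénilan type bound $\Delta P_k\geq-\delta_3 J^k C_0'$ likewise improves; hence \eqref{P_kE} tends uniformly in $k$ to the clean pressure equation and the interior regularity estimates for $\hat{\nabla}P_k$ on $\{P_k\geq\varepsilon\}$ may be taken $k$-independent. A secondary technical point is to confirm that the reference point $(\mu,-2r)$ indeed lies safely inside $\{P_k>0\}$ so that $\hat{\nabla}P_k(\mu,-2r)$ is non-degenerate: this follows from the cone monotonicity of Lemma \ref{conemon} together with the non-degeneracy estimate of Proposition \ref{lindegs}, applied after the rescaling \eqref{P_k}, which guarantees that at distance of order one from the boundary point $(0,0)$ along the interior direction $\hat{\mu}_k$ the rescaled pressure $P_k$ is bounded below by a positive constant independent of $k$.
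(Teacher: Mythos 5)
Your treatment of \eqref{P_k-eps} is correct and matches the paper: it follows at once from the Lipschitz bound in \eqref{est-k}, the fact that $P_k(0,0)=0$, and the choice $r\leq\varepsilon/(2L)$. The gap is in your argument for \eqref{P_k-ga}.

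You want to pass from $\hat{\nabla}_pP_k=|\hat{\nabla}P_k|\cos\langle p,\nu_k\rangle$, together with $|\hat{\nabla}P_k|\geq c_2$, to the pointwise lower bound $\hat{\nabla}_pP_k\big((x,t)+sp\big)\geq (c_2/2)\cos\langle p,\hat{\nabla}P_k(\mu,-2r)\rangle$ along the segment, and you propose to control the deviation of $\nu_k$ by interior parabolic estimates plus shrinking $r$. This cannot work for two reasons. First, the reference point $(\mu,-2r)$ lies at spatial distance of order $1$ from the base points $(x,t)\in B_{3/4}$; shrinking $r$ contracts the time window and the small ball but not $B_{3/4}$ nor the offset of $\mu$, so the relevant spatial separation stays of order $1$, and interior Schauder/H\"{o}lder estimates give an $O(1)$ oscillation of $\nu_k$ over that scale, not a small one. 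Second, and more fundamentally, when $p$ is near the boundary of the cone $\hat{W}_{\hat{\mu}_k,\theta_k}$ the quantity $\cos\langle p,\nu_k\rangle$ is close to $0$, and even a small angular change in $\nu_k$ can make the ratio $\cos\langle p,\nu_k(x,t)\rangle/\cos\langle p,\nu_k(\mu,-2r)\rangle$ arbitrarily small; cone monotonicity only guarantees $\cos\langle p,\nu_k\rangle\geq 0$, not any lower bound on this ratio. A continuity or H\"{o}lder argument yields at best an \emph{additive} error bound for $\phi_k:=\hat{\nabla}_pP_k$, which is useless once $\phi_k(\mu,-2r)$ is itself small; the lemma needs a \emph{multiplicative} comparison.

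The paper obtains exactly that multiplicative comparison via the parabolic Harnack inequality. Differentiating \eqref{P_kE} in the direction $p$ shows that $\phi_k=\hat{\nabla}_pP_k\geq0$ solves a linear, uniformly parabolic equation on $\{P_k\geq\varepsilon/2\}$ with bounded coefficients; after the change of variables $\tilde{\phi}_k:=e^{(m-1)\delta_3J^k(C_0'+K_G)t}\phi_k$ to absorb the zeroth-order term, Harnack's inequality gives $\phi_k(x,t)\geq\tilde{C}\phi_k(\mu,-2r)$ for $(x,t)\in B_{3/4}\times[-2r,2r]\cap\{P_k>\varepsilon\}$, with $\tilde{C}$ depending only on $L,r,\varepsilon$ and universal constants. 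This bound is what replaces your pointwise direction estimate; the rest of your argument (mean value theorem along the segment, $|\hat{\nabla}P_k|\geq c_2$, dividing by $P_k=\varepsilon$) then completes the proof exactly as in the paper. The Harnack step is the essential idea that your proposal is missing.
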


\begin{lemma}\label{incre-rate}
Let $\varepsilon>0$ be small enough depending only on $L,\sigma$, and let $r,P_k,\tau$ be from Lemma~\ref{bound-data}. If $\omega$ is a sup-solution to~\eqref{P_kE} such that $\omega\geq P_k$ in $A_1$, and for some $\gamma\in(0,\varepsilon)$, 
\begin{equation}\label{ep-bo-co}
\omega(x,t)\geq (1+\tau\gamma)P_k(x,t)\quad \text{in }B_{1/2}\times[-2r,2r]\cap \{P_k=\varepsilon\}, 
\end{equation}
then 
\begin{equation}\label{in-ep}
\omega(x,t)\geq (1+\tau\gamma)P_k(x,t)\quad \text{in }B_{1/4}\times[-2r,2r]\cap \{P_k\leq\varepsilon\}.
\end{equation}
\end{lemma}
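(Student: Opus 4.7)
The plan is to run a parabolic comparison principle between $\omega$ and $w:=(1+\tau\gamma)P_{k}$ on the set
\[
D\ :=\ B_{1/4}\times[-2r,2r]\cap\{P_{k}<\varepsilon\}.
\]
The idea is that, although $(1+\tau\gamma)P_{k}$ is strictly larger than $P_{k}$, the strong linear non-degeneracy $\partial_{t}P_{k}\geq c_{2}$ from Lemma~\ref{deg-rate} together with the smallness of $P_{k}$ in $D$ turns $w$ into a \emph{strict} subsolution of the degenerate parabolic operator associated to \eqref{P_kE}, which is enough to deduce $\omega\geq w$ from the boundary information provided by the hypothesis \eqref{ep-bo-co}.

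First I would verify the subsolution property. Denoting
\[
\mathcal{L}_{k}[u]\ :=\ \partial_{t}u-(m-1)u\Delta u-|\nabla u|^{2}-(m-1)J^{k}\delta_{3}\,u\,G(J^{k}\delta_{3}u),
\]
I substitute $w=(1+\tau\gamma)P_{k}$ and use the equation $\mathcal{L}_{k}[P_{k}]=0$ to get
\[
\mathcal{L}_{k}[w]\ =\ -(1+\tau\gamma)\tau\gamma\bigl[\partial_{t}P_{k}-(m-1)J^{k}\delta_{3}P_{k}G(J^{k}\delta_{3}P_{k})\bigr]\ +\ (m-1)J^{k}\delta_{3}(1+\tau\gamma)P_{k}\bigl[G(J^{k}\delta_{3}P_{k})-G(J^{k}\delta_{3}(1+\tau\gamma)P_{k})\bigr].
\]
The first bracket is bounded below by $c_{2}-C\varepsilon$ thanks to Lemma~\ref{deg-rate} and $P_{k}\leq\varepsilon$; the second is $O(\tau\gamma\varepsilon^{2})$ by a mean-value argument using $G\in C^{1}(0,P_{H}]$ and \eqref{KG}. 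Choosing $\varepsilon$ small (and recalling $\tau\leq C\varepsilon^{-1}$ from Lemma~\ref{bound-data}, so $\tau\gamma\leq C$), the first term dominates and $\mathcal{L}_{k}[w]\leq-\tfrac{1}{2}(1+\tau\gamma)\tau\gamma c_{2}<0$ throughout $D\cap\{P_{k}>0\}$. Hence $w$ is a strict weak subsolution.

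Next I would apply the parabolic comparison principle on $D$ for the pair $(\omega,w)$. The parabolic boundary $\partial_{p}D$ splits into four pieces: (a) the upper level set $\{P_{k}=\varepsilon\}\cap B_{1/4}\times[-2r,2r]$, where the hypothesis \eqref{ep-bo-co} (originally stated on $B_{1/2}$, which contains $B_{1/4}$) gives $\omega\geq w$; (b) the free boundary $\{P_{k}=0\}\cap\overline{D}$, where $w=0\leq\omega$; (c) the spatial piece on $\partial B_{1/4}\cap\{P_{k}<\varepsilon\}$; and (d) the initial slice $B_{1/4}\times\{-2r\}\cap\{P_{k}<\varepsilon\}$. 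For (c) and (d), I would use the cone monotonicity of $P_{k}$ in the $\mu$-direction together with the Lipschitz character of $\Gamma$ from Lemma~\ref{deg-rate}: any point in $D$ can be connected by a segment in the $\mu$-direction to a point on $\{P_{k}=\varepsilon\}\cap B_{1/2}\times[-2r,2r]$, which allows one to run a standard sliding argument that replaces $\omega$ by its translate $\omega(\cdot-s\mu,\cdot)$ for small $s>0$ (gaining strict inequality on (c)--(d) from \eqref{k5}-type non-degeneracy and the uniform continuity of $\omega$ and $P_{k}$), and then sends $s\to 0^{+}$. Combined with the strict subsolution property, the standard weak comparison for degenerate parabolic operators then yields $\omega\geq w$ throughout $D$, i.e.\ \eqref{in-ep}.

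The main obstacle is the subsolution verification near the free boundary, where $P_{k}$ is degenerate and the term $(1+\tau\gamma)P_{k}\Delta P_{k}$ cannot be controlled pointwise (only $\Delta P_{k}\geq-\delta_{3}J^{k}C_{0}'$ is available from the Aronson--B\'enilan estimate). Rewriting $\mathcal{L}_{k}[w]$ so that $\Delta P_{k}$ is eliminated in favor of $\partial_{t}P_{k}$ (as above) is crucial, because it is precisely the strict lower bound $\partial_{t}P_{k}\geq c_{2}$ of Lemma~\ref{deg-rate} that generates the negative sign; the delicate part is then to balance $\tau\gamma c_{2}$ against the reaction perturbation, which forces the smallness of $\varepsilon$ announced in the lemma. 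The second source of technical difficulty is the boundary handling on $\partial B_{1/4}$ and at $t=-2r$, where cone monotonicity and Lipschitz regularity of $\Gamma$ must be invoked to reduce to the hypothesis on $\{P_{k}=\varepsilon\}$.
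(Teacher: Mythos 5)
Your subsolution computation is essentially fine and is a genuine algebraic variant of the paper's: you eliminate $\Delta P_k$ in favour of $\partial_t P_k$ and invoke $\partial_t P_k\geq c_2$ from Lemma~\ref{deg-rate}, whereas the paper's computation retains $\Delta P_k$ and extracts the dominant negative term from $-g'(1+g')|\nabla P_k|^2\leq-g'c_2^2$ via $\nabla_\mu P_k\geq c_2$. Either non-degeneracy estimate suffices for that step.

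The gap is in the boundary comparison, and it is a genuine one. By directly comparing $\omega$ with $w=(1+\tau\gamma)P_k$ on $D=B_{1/4}\times[-2r,2r]\cap\{P_k<\varepsilon\}$, you need $\omega\geq(1+\tau\gamma)P_k$ on the \emph{entire} parabolic boundary of $D$, but on the lateral piece $\partial B_{1/4}\cap\{0<P_k<\varepsilon\}$ and on the bottom slice $B_{1/4}\times\{-2r\}\cap\{0<P_k<\varepsilon\}$ the only available information is $\omega\geq P_k$; insisting on $\omega\geq(1+\tau\gamma)P_k$ there is precisely the conclusion you are trying to establish, so the argument is circular. The "sliding in $\mu$" you sketch cannot close this: replacing $\omega$ by $\omega(\cdot-s\mu,\cdot)$, or $w$ by $(1+\tau\gamma)P_k(\cdot+s\mu,\cdot)$, reduces the needed boundary gain to roughly $c_2 s\geq\tau\gamma P_k$, i.e.\ $s\gtrsim\varepsilon$, so $s$ cannot be sent to $0^+$ and no limiting inequality is recovered. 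The paper avoids the circularity by \emph{modifying the barrier} rather than the domain: it compares $\omega$ with $\zeta_k=P_k+\tau\gamma(P_k+\varepsilon(t+\alpha)-f(x))_+$, where the cutoff $f$ (equal to $0$ in $B_{1/4}$, to $\varepsilon$ near $\partial B_{1/2}$) and the time-shift $\varepsilon(t+\alpha)$ force $\zeta_k=P_k$ on $\partial B_{1/2}$ and make the positive part vanish as $t\downarrow -2r$, so only $\omega\geq P_k$ is needed on those pieces; the improved inequality is then read off at the target time $t=-\alpha$ in $B_{1/4}$, where $\zeta_k$ coincides with $(1+\tau\gamma)P_k$. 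Without some such degenerating barrier, the comparison cannot be started.
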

\begin{lemma}\label{ini-rate}
Let $\varepsilon,r,\tau,\omega,\gamma$ be from Lemma~\ref{incre-rate}. There exist  small $\kappa,\delta_3>0$ depending only on $L,\sigma$ such that the following holds. Consider any smooth function $\phi:\mathbb{R}^n\to \mathbb{R}_+$ such that $\phi$ is supported in $B_{2r}$ and $\phi,|\nabla\phi|,|D^2\phi|\leq \kappa\tau\gamma$. If $P_k\leq \varepsilon$ in $A_{2r}$,  then it holds
\begin{equation*}
\omega(x,t)\geq P_k(x+(t+2r)\phi(x)\mu,t)\quad\text{in }A_{2r}.
\end{equation*}
\end{lemma}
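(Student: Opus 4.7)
The plan is a comparison principle argument between $\omega$ and the spatially shifted function
\begin{equation*}
\bar P_k(x,t):=P_k\bigl(x+(t+2r)\phi(x)\mu,\,t\bigr),\qquad (x,t)\in A_{2r}.
\end{equation*}
I would verify (i) $\omega\geq \bar P_k$ on the parabolic boundary of $A_{2r}$, and (ii) $\bar P_k$ is a weak sub-solution of equation~\eqref{P_kE} up to an $O(\kappa\tau\gamma)$ correction, after which the parabolic comparison principle closes the argument.

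The boundary check (i) is the elementary part. Since $\mathrm{supp}(\phi)\subset B_{2r}$, one has $\phi\equiv 0$ on $\partial B_{2r}$, hence $\bar P_k=P_k$ on the lateral boundary $\partial B_{2r}\times[-2r,2r]$; at the initial slice $t=-2r$ the prefactor $(t+2r)$ vanishes, so $\bar P_k(\cdot,-2r)=P_k(\cdot,-2r)$. Combined with the hypothesis $\omega\geq P_k$ in $A_1$ carried over from Lemma~\ref{incre-rate}, this yields $\omega\geq \bar P_k$ on the parabolic boundary of $A_{2r}$.

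The sub-solution check (ii) is the technical heart. With $y:=x+(t+2r)\phi(x)\mu$, the chain rule combined with the identity $\mathcal{L}_k(P_k)(y,t)=0$ produces
\begin{equation*}
\mathcal{L}_k(\bar P_k)(x,t)=\phi(x)(\nabla_\mu P_k)(y,t)+E(x,t),
\end{equation*}
where $E$ collects every perturbation term carrying a factor of $\nabla\phi$, $D^2\phi$, or $(\nabla\phi)^2$. Using the Lipschitz bounds $|\nabla P_k|,|\partial_t P_k|\leq L$ from~\eqref{est-k}, the hypothesis $P_k\leq\varepsilon$ throughout $A_{2r}$, and the structural identity
\begin{equation*}
(m-1)P_k\Delta P_k=\partial_t P_k-|\nabla P_k|^2-(m-1)J^k\delta_3\,P_k\,G(J^k\delta_3 P_k),
\end{equation*}
which bounds $P_k|\Delta P_k|$ uniformly even where $\Delta P_k$ alone blows up near $\{P_k=0\}$, each contribution in $E$ is estimated by $C(L)\kappa\tau\gamma$; the leading residual $\phi\nabla_\mu P_k$ is of the same size.

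This residual is of the wrong sign for a direct sub-solution, so the final step is to absorb it using the quantitative cushion $\omega-P_k\geq \tau\gamma P_k$ supplied by Lemma~\ref{incre-rate}. I would compare $\omega$ against $\bar P_k$ modified by an auxiliary barrier $\psi$ of size $C(L)\kappa\tau\gamma$ and sub-solution character, and then let $\psi\to 0$; choosing $\kappa,\delta_3$ small enough in terms of $L$ and the universal constants keeps the boundary inequality intact throughout this limit. The principal obstacle is precisely this sign mismatch: because the shift is in the direction $\mu$ along which $P_k$ strictly grows (by the cone monotonicity of Lemma~\ref{conemon} and the non-degeneracy of Lemma~\ref{deg-rate}), the leading perturbation has the wrong sign, so without the multiplicative cushion produced by Lemma~\ref{incre-rate} the plain comparison would fail. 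A secondary difficulty, handled by the displayed identity, is the singular degeneracy of the equation near the free boundary.
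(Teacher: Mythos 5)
Your outline identifies the right ingredients — the shifted function $\bar P_k$, the boundary reduction via $\phi\equiv 0$ on $\partial B_{2r}$ and $(t+2r)=0$ at $t=-2r$, the positive residual $\phi\,\nabla_\mu P_k$, and the need for the Lemma~\ref{incre-rate} cushion $\omega\geq(1+\tau\gamma)P_k$ — but the mechanism by which you plan to deploy that cushion does not close the argument. You compare $\omega$ against $\bar P_k$ itself, and on the parabolic boundary you only extract $\omega\geq P_k=\bar P_k$, which provides no room; the surplus $\omega-\bar P_k\geq\tau\gamma P_k$ on the boundary degenerates to zero at the free boundary $\{P_k=0\}$, so an additive barrier $\psi$ of the fixed size $C(L)\kappa\tau\gamma$ cannot be slid under this cushion, and in any case you cannot ``let $\psi\to 0$'': $\psi$ must stay of order $\kappa\tau\gamma$ to absorb the residual, so the best you could conclude is $\omega\geq\bar P_k-\psi$, which is strictly weaker than the lemma. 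The paper's (and \cite{CW1990}'s) fix is multiplicative, not additive: set $\xi_k:=(1+\tau\gamma)\bar P_k$. Then the boundary inequality $\omega\geq\xi_k$ on $\partial_p A_{2r}$ is exactly Lemma~\ref{incre-rate} (since $\bar P_k=P_k$ there), and — this is the point you are missing — when one computes $\mathcal{L}_k(\xi_k)$ the prefactor $(1+\tau\gamma)$ interacting with the quadratic term $|\nabla P_k|^2$ produces the strictly negative contribution $-\tau\gamma(1+\tau\gamma)|\nabla P_k|^2\leq -\tau\gamma(1+\tau\gamma)c_2^2$ via the non-degeneracy of Lemma~\ref{deg-rate}, and this dominates the positive residual $\phi\,\nabla_\mu P_k=O(\kappa\tau\gamma L)$ once $\kappa$ is small relative to $c_2^2/L$. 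Without the multiplier the equation's nonlinearity never generates this negative term, so no additive correction of the allowed size can save the sign.

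A secondary issue: to control the error $E$ you invoke the identity $(m-1)P_k\Delta P_k=\partial_tP_k-|\nabla P_k|^2-\cdots$ to bound $P_k|\Delta P_k|$. That only bounds the trace of $P_k D^2P_k$, but expanding $\Delta\xi_k$ produces mixed terms like $P_k\nabla\nabla_\mu P_k\cdot\nabla\phi$ and $P_k\nabla_\mu\nabla_\mu P_k\,|\nabla\phi|^2$ that need the full weighted Hessian. The paper instead obtains $|P_k\nabla_{i,j}P_k|\leq C_4$ uniformly in $A_{2r}$ from interior Schauder-type estimates for the (locally uniformly parabolic) equation, which is what you actually need here and does not follow from the structural identity alone.
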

\begin{proof}[\underline{\textbf{Proof of Lemma~\ref{bound-data}}}] To begin with, \eqref{est-k} and $r\leq \frac{\varepsilon}{2L}$ directly yield~\eqref{P_k-eps}.

We argue by Harnack's inequality of the uniform parabolic equation.  Let us define $\phi_k:=\hat{\nabla}_{p}P_k\geq0$ for any $p\in \hat{W}_{\hat{\mu}_{k},\theta_k}$ which satisfies 
\begin{equation*}
\begin{aligned}
\partial_t\phi_k=&(m-1)P_k\Delta \phi_k+(m-1)\phi_k\Delta P_k+2\nabla \phi_k\cdot\nabla P_k\\
&+(m-1)J^k\delta_3\phi_kG(J^k\delta_3 P_k)+(m-1)J^{2k}\delta_3^2P_kG'(\delta_3 J^kP_k) \phi_k\\
\geq &(m-1)P_k\Delta \phi_k+2\nabla \phi_k\cdot\nabla P_k-\phi_k(m-1)J^{k}\delta_3(C_0'+K_G),
\end{aligned}
\end{equation*}
where the last inequality is derived by~\eqref{KG}. Set $\tilde{\phi}_k:=e^{(m-1)\delta_3J^k(C_0'+K_G)t}\phi_k$, it holds by the direct computation that
 \begin{equation*}
 \partial_t\tilde{\phi}_k\geq (m-1)P_k\Delta \tilde{\phi}_k+2\nabla \tilde{\phi}_k\cdot\nabla P_k.
 \end{equation*}

In $(x,t)\in B_{7/8}\times [-3r,3r]\cap \{P_k\geq \frac{1}{2}\varepsilon\}$,  by Harnack's inequality, there exists $C>0$ depending on $L,r,\varepsilon$ such that 
\begin{equation*}
\tilde{\phi}_{k}(x,t)\geq C\tilde{\phi}_k(\mu,-2r)\quad\text{for }(x,t)\in B_{3/4}\times [-2r,2r]\cap \{P_k>\varepsilon\}.
\end{equation*}
In another words, it holds for $\tilde{C}=Ce^{-(m-1)\delta_3 J^k(C_0'+K_G)4r}$ that
\begin{equation}\label{me-va}
\phi_{k}(x,t)\geq \tilde{C}\phi_k(\mu,-2r)\quad\text{for }(x,t)\in B_{3/4}\times [-2r,2r]\cap \{P_k>\varepsilon\}.
\end{equation}
Hence, for any $(x,t)\in B_{3/4}\times [-2r,2r]\cap \{P_k=\varepsilon\}$, it holds by the differential mean value theorem and \eqref{me-va} that
\begin{equation*}
\begin{aligned}
\frac{P_k((x,t)+\gamma p)-P_k(x,t)}{\gamma}&\geq \tilde{C}\phi_k(\mu,-2r)=\tilde{C}|\hat{\nabla}P_k|\cos<p,\hat{\nabla}P_k(\mu,-2r)>\\
&\geq \tilde{C}c_2\cos<p,\hat{\nabla}P_k(\mu,-2r)>\\
&=\tilde{C}c_2\varepsilon^{-1}P_k(x,t)\cos<p,\hat{\nabla}P_k(\mu,-2r)>,
\end{aligned}
\end{equation*}
 which yields~\eqref{P_k-ga} when we set  $C:=\min\{\tilde{C}c_2,1\}$. The proof is completed.

\end{proof}
\begin{proof}[\underline{\textbf{Proof of Lemma~\ref{incre-rate}}}]Let the non-negative $f\in C^1(B_{1/2})$ satisfy 
\begin{equation}\label{ffdef}
f=0\ \text{in }B_{1/4};\quad f=\varepsilon\ \text{on }B_{1/2};\quad |\nabla f|\leq 10\varepsilon;\quad |\Delta f|\leq 10\varepsilon.
\end{equation}
For any given $\alpha\in(-2r,2r)$, we define 
\begin{equation*}
\zeta_k(x,t):=P_k(x,t)+\tau\gamma(P_k(x,t)+\varepsilon(t+\alpha)-f(x))_+\quad \text{ in }B_{1/2}\times [-2r,-\alpha].
\end{equation*}

Our next goal is to prove that $\zeta_k$ is a sub-solution to~\eqref{P_kE} in $B_{1/2}\times [-2r,-\alpha]\cap \{P_k\leq \varepsilon\}$. As in \cite{CW1990},  define $g=\tau\gamma s^+:=\tau\gamma \max \{0,s\}$, then it follows $g'=\tau\gamma \textbf{1}_{s>0}$ and $g''\geq0$ in the sense of distribution. Denote $g_k:=g(P_k+\varepsilon(t+\alpha)-f)$,  we have
\begin{equation}\label{zeta-e}
\begin{aligned}
\partial_t\zeta_k&=\partial_t(P_k+g_k)=(1+g')\partial_t P_k+\varepsilon g',\\
\nabla \zeta_k&=\nabla(P_k+g_k)=(1+g')\nabla P_k-g'\nabla f,\\
\Delta \zeta_k&=\Delta(P_k+g_k)=\Delta P_k+g'(\Delta P_k-\Delta f)+g''|\nabla P_k-\nabla f|^2\\
&\geq (1+g')\Delta P_k-g'\Delta f.\\
\end{aligned}
\end{equation}
Then, we take~\eqref{est-k}, \eqref{ffdef} and~\eqref{zeta-e} into consideration and obtain
\begin{equation*}
\begin{aligned}
\partial_t&\zeta_k-(m-1)\zeta_k\Delta\zeta_k-|\nabla\zeta_k|^2-(m-1)\delta_3J^k\zeta_kG(\delta_3 J^k\zeta_k)\\
\leq& (1+g')(\partial_tP_k-(m-1)P_k\Delta P_k-|\nabla P_k|^2-(m-1)\delta_3J^kP_kG(\delta_3J^kP_k))\\
&+\varepsilon g' -(m-1)g_k(1+g')\Delta P_k+g'\zeta_k\Delta f+2g'(1+g')\nabla P_k\cdot\nabla f-g'^2|\nabla f|^2\\
&-g'(1+g')|\nabla P_k|^2+(m-1)\delta_3J^kP_kG(\delta_3J^kP_k))-(m-1)\delta_3J^k\zeta_kG(\delta_3 J^k\zeta_k)\\
\leq &\varepsilon g'-g'(1+g')|\nabla P_k|^2-(m-1)g'(P_k+\varepsilon(t+\alpha)-f)(1+g')\Delta P_k+2g'(1+g')\nabla P_k\cdot\nabla f\\
&+g'\zeta_k\Delta f+(m-1)\delta_3J^kP_k(G(\delta_3J^kP_k)-G(\delta_3J^k\zeta_k))+(m-1)\delta_3J^k(P_k-\zeta_k)G(\delta_3 J^k\zeta_k).\\
\leq&g'[\varepsilon+20L(1+g')\varepsilon+100g'\varepsilon^2+2\delta_3 J^kC_0'(m-1)(1+g')\varepsilon-c_2^2]+(m-1)K_G\delta_3J^{k}g_k\\
\leq &g'[\varepsilon\big(1+2\delta_3 J^kC_0'(m-1)(1+g')+20L(1+g')+100g'\varepsilon+2(m-1)K_G\delta_3J^{k}L\big)-c_2^2].
\end{aligned}
\end{equation*}
where the seventh line is derived by~\eqref{KG} and the fact $g_k\leq 2g'\varepsilon$ is used in the last line.

Since $\gamma\in(0,\varepsilon)$, we get
\begin{equation*}
g'\leq \tau\gamma \leq C\leq 1.
\end{equation*}
 Choosing $0<\varepsilon\leq\frac{c_2^2}{2+40L}\leq \frac{c_2^2}{1+4\delta_3 J^kC_0'(m-1)+40L+100\varepsilon+2(m-1)K_G\delta_3J^{k}}$ with small $\delta_3,\varepsilon>0$, then it holds in $B_{1/2}\times [-2r,-\alpha]\cap \{P_k\leq \varepsilon\}$ that
 \begin{equation*}
\partial_t\zeta_k-(m-1)\zeta_k\Delta\zeta_k-|\nabla\zeta_k|^2-(m-1)\delta_3J^k\zeta_kG(\delta_3 J^k\zeta_k)\leq0.
\end{equation*}
It follows from \cite[Proposition 2.3]{CW1990} that
 \begin{equation*}
\omega(x,t)\geq \zeta_k(x,t) \quad \text{in }B_{1/4}\times[-2r,-\alpha]\cap \{P_k\leq\varepsilon\}.
\end{equation*}
By the definition of $\zeta_k$ and setting $t=-\alpha$,  we conclude
\begin{equation*}
\omega(x,-\alpha)\geq (1+\gamma\tau)P_k(x,-\alpha)\text{ for }B_{1/4}\cap \{P_k(\cdot,-\alpha)\leq\varepsilon\}\text{ with any }\alpha\in(-2r,2r),
\end{equation*}
and the proof is completed. 
\end{proof}
\begin{proof}[\underline{\textbf{Proof of Lemma~\ref{ini-rate}}}] On account of~\eqref{est-k}, we use the interior estimate of the uniform parabolic equation as in~\cite{CW1990} and obtain
\begin{equation*}
|P_k\nabla_{i,j}P_k|\leq C_4\quad \text{in }A_{2r},
\end{equation*}
where $C_4>0$ is a constant depending on $c_2,L$, the universal constant and Lipschitz's constant of $\Gamma\cap A_{2r}$, and $i,j=1,...,n$.  

To this end,  we define 
\begin{equation*}
\xi_k(x,t):=(1+\tau\gamma)P_k\big(x+(t+2r)\phi\mu,t\big)\quad\text{in }A_{2r}.
\end{equation*}
By means of the definition of $\phi$ and the conclusions in Lemma~\ref{incre-rate}, we have
\begin{align*}
\omega\geq \zeta_k\text{ on the parabolic boundary of }\Sigma:=A_{2r}\cap\{P_k\leq\varepsilon\}.
\end{align*}

Writing $\tau'=\tau\gamma\leq 1$, it completely follows from \cite[Proposition 2.4]{CW1990} that
\begin{align*}
\partial_t\xi_k=&(1+\tau')(\partial_t P_k+\nabla_{\mu}P_k\phi),\\
\nabla\xi_k=&(1+\tau')(\nabla P_k+\nabla_{\mu}P_k(t+2r)\nabla\phi),\\
\Delta \xi_k=&(1+\tau')[\Delta P_k+2(t+2r)\nabla\nabla_{\mu}P_k\cdot\nabla\phi+\nabla_{\mu}\nabla_{\mu}P_k(t+2r)^2|\nabla\phi|^2+\nabla_{\mu}P_k(t+2r)\Delta \phi].
\end{align*}
Taking the above equalities, the fact $\tau'\leq 1$ and $\phi,|\nabla\phi|,|D^2\phi|\leq \kappa\tau'$ into account, we get
\begin{align*}
&\partial_t\xi_k-(m-1)\xi_k\Delta\xi_k-|\nabla\xi_k|^2-(m-1)\delta_3J^k\xi_kG(\delta_3J^k\xi_k)\\
\leq&(1+\tau')[\partial_tP_k-(m-1)P_k\Delta P_k-|\nabla P_k|^2-(m-1)\delta_3J^kP_kG(\delta_3J^kP_k)]\\
&+(1+\tau')\nabla_{\mu}P_k\phi-\tau'(1+\tau')|\nabla P_k|^2-2(1+\tau')^2\nabla_{\mu}P_k(t+2r)\nabla P_k\cdot\nabla\phi\\
&-(1+\tau')^2|\nabla_{\mu}P_k(t+2r)\nabla\phi|^2-\tau'(1+\tau')(m-1)P_k\Delta P_k+(1+\tau')\tau'(m-1)\delta_3J^{k}K_GP_k\\
&-(1+\tau')^2P_k[2(t+2r)\nabla\nabla_{\mu}P_k\cdot\nabla\phi+\nabla_{\mu}\nabla_{\mu}P_k(t+2r)^2|\nabla\phi|^2+\nabla_{\mu}P_k(t+2r)\Delta \phi]\\
\leq&\tau'(1+\tau')[-c_2^2+(m-1)\delta_3J^kC_0'L+(m-1)\delta_3J^{k}K_GL+\kappa C(L+C_4)],
\end{align*}
 where $C>0$ is a universal constant. If $\kappa,\delta_3$ are small, then we get
 \begin{align*}
 \partial_t\xi_k-(m-1)\xi_k\Delta\xi_k-|\nabla\xi_k|^2-(m-1)\delta_3J^k\xi_kG(\delta_3J^k\xi_k)\leq 0\quad\text{in }A_{2r}.
 \end{align*}
 Furthermore, it holds by the comparison principle that
 \begin{align*}
 \omega(x,t)\geq \xi_k(x,t)\geq P_k(x+(t+2r)\phi(x)\mu,t)\quad\text{in }A_{2r},
 \end{align*}
and the proof is completed.
\end{proof}

Based on Lemmas~\ref{bound-data}-\ref{ini-rate}, we show the iteration process similar to that in~\cite{CW1990,KZ2021} which yields the $C^{1,\alpha}$ regularity of free boundary.
\begin{proposition}[Improvement of monotonicity]\label{imp-mon}Suppose that $(0,0)\in \Gamma$. Then, there exist $J,s\in(0,1)$ and $\delta_3\in (0,\delta_2)$ such that 
\begin{equation}\label{-itera}
\hat{\nabla}_{p}P_k\geq \frac{1}{2L}J^k\quad\text{in }A_1\cap\{P_k>0\}
\end{equation}for all $k\in\mathbb{N},\ p\in \hat{W}_{\hat{\mu}_k,\theta_k}\cap \textbf{S}^n\text{ with }\theta_k:=\frac{\pi}{2}-s^k(\frac{\pi}{2}-\theta_0)$.
\begin{proof}
 We prove this by induction. Assume that  it holds for some large $L>0$ and any $k\in\mathbb{N}$ that
 \begin{equation*}
\hat{\nabla}_pP_k\geq \frac{1}{2L}J^k.
  \end{equation*}
  
  We note that $\phi$ is given in Lemma~\ref{ini-rate}, and suppose that 
 \begin{equation*}
 \phi\geq cr^2\kappa\tau\gamma\quad\text{in }B_r\text{ for some small constant } c>0.
 \end{equation*}
Let $\omega(x,t)=P_{k}((x,t)+\gamma p)\text{ for }p\in \hat{W}_{\theta_k,\hat{\mu}_k}\cap \textbf{S}^n$, then it follows from Lemmas~\ref{bound-data}-\ref{ini-rate} that
\begin{equation*}
P_{k}((x,t)+\gamma p)\geq P_k(x+(t+2r)\phi(x)\mu,t)\quad\text{in }A_{2r}.
\end{equation*}
By the differential mean value theorem, it holds for $c'=\frac{cr^3\kappa}{L}$ that 
\begin{equation*}
P_k((x,t)+\gamma p)\geq P_k(x+(t+2r)\phi(x)\mu,t)\geq P_k(x,t)+\frac{t+2r}{L}\phi(x)\geq P_k(x,t)+c'\tau\gamma
\end{equation*}
in $A_r$, which yields
\begin{equation*}
\hat{\nabla}_p P_{k}(x,t)\geq c'\tau\quad\text{in }A_r.
\end{equation*}
We take the definition of $\tau$ in Lemma~\ref{bound-data} and obtain
\begin{equation*}
\hat{\nabla}_{p}P_k(x,t)\geq C'\cos<p,\hat{\nabla}P_k(\mu,-2r)>,
\end{equation*}
in which $C'=c'C\varepsilon^{-1}$.  Hence, we conclude by the small $c>0$ that
\begin{equation*}
\cos<p,\hat{\nabla}P_k(x,t)>\geq \frac{C'}{L}\cos<p,\hat{\nabla}P_k(\mu,-2r)>.
\end{equation*}
In addition, we have
\begin{equation*}
\cos<p,\hat{\nabla}P_k(\mu,-2r)>=\frac{\hat{\nabla}_p P_k(\mu,-2r)}{|\hat{\nabla} P_k(\mu,-2r)|}\geq\frac{J^k}{2L^2}.
\end{equation*}
Let the radius be 
\begin{equation*}
\rho_k(p):=\frac{C'}{8L}\cos<p,\hat{\nabla}P_k(\mu,-2r)>, 
\end{equation*}
then it holds for $q\in\hat{W}_{\hat{\mu}_{k+1},\theta_k+\rho_k(p)}$ that
\begin{equation*}
\begin{aligned}
\cos<&q,\hat{\nabla}P_k(x,t)>\\
=&\cos<q,p>\cos<p,\hat{\nabla}P_k(x,t)>+\sin<q,p>\sin<p,\hat{\nabla}P_k(x,t)>\\
\geq&\frac{1}{2}\cos<p,\hat{\nabla}P_k(x,t)>-2\sin<q,p>\\
\geq&\frac{C'}{2L}\cos<p,\hat{\nabla}P_k(\mu,-2r)>-2\rho_k(p)\\
\geq&\frac{C'}{4L}\cos<p,\hat{\nabla}P_k(\mu,-2r)>,
\end{aligned}
\end{equation*}
 where the fact $0\leq\sin \alpha \leq \alpha$ for $0\leq \alpha\leq \frac{\pi}{2}$ is used in the forth line. Hence, it holds
\begin{equation*}
\begin{aligned}
\hat{\nabla}_qP_k(x,t)=|\hat{\nabla}P_k(x,t)|\cos<q,\hat{\nabla}P_k(x,t)> \geq\frac{c_2C'}{8L^3}J^k\quad\text{in }A_r\cap\{P_k>0\}.
\end{aligned}
\end{equation*}
Choosing $0<J\leq \min\{\frac{c_2 C'}{4L^2},r\}$, then we have
\begin{equation}\label{Pk}
\hat{\nabla}_pP_k(x,t) \geq \frac{1}{2L}J^{k+1}\quad\text{in }A_r\cap\{P_k>0\}\text{ and }p\in \hat{W}_{\hat{\mu}_{k+1},\theta_{k+1}},
\end{equation}
where $\theta_{k+1}=s(\frac{\pi}{2}-\theta_k)+\theta_k=\frac{\pi}{2}-s^{k}(\frac{\pi}{2}-\theta_0)$ for some fixed $s\in(0,1)$.

In conlusion, we set $P_{k+1}=\frac{1}{J}P_k(Jx,Jt)$, it concludes by~\eqref{Pk} that
\begin{equation*}
\hat{\nabla}_pP_{k+1}(x,t) \geq \frac{1}{2L}J^{k+1}\quad\text{in }A_1\cap\{P_{k+1}>0\}\text{ and }p\in \hat{W}_{\hat{\mu}_{k+1},\theta_{k+1}},
\end{equation*}
and the proof is completed.
\end{proof}

\begin{proof}[\underline{\textbf{Proof of Theorem~\ref{t2}}}] Similar arguments as in \cite[Theorem 1]{CW1990},  $\Gamma\cap A_{\delta_3}$ is a $C^{1,\alpha}$ surface. More precisely, there exist $\alpha\in(0,1)$ such that 
\begin{equation}
|\nu(x)-\nu(y)|\leq |x-y|^{\alpha}\quad\text{for any }x,y\in \Gamma\cap A_{\delta_3},
\end{equation}
where $\nu(x)$ is the outer normal vector at the position $x\in\Gamma\cap A_{\delta_3}$. The proof is completed.
\end{proof}

\end{proposition}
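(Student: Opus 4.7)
The plan is to prove Proposition~\ref{imp-mon} by induction on $k$, using the three preparatory lemmas to widen the monotonicity cone by a fixed geometric factor at each iteration while the scale shrinks by $J$. The base case $k=0$ follows from Lemma~\ref{deg-rate} combined with Lipschitz's regularity of Theorem~\ref{t3}: the non-degeneracy $\hat{\nabla}_{\hat{\mu}} P \geq c_2$ together with $|\hat{\nabla} P_{\delta_3}| \leq L$ and the spatial cone monotonicity of Lemma~\ref{conemon} determine a space-time axis $\hat{\mu}_0$ and an initial opening $\theta_0 < \pi/2$ on which $\hat{\nabla}_p P_0 \geq \frac{1}{2L}$.

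For the inductive step, assume the estimate at level $k$ and fix $p \in \hat{W}_{\hat{\mu}_k, \theta_k} \cap \textbf{S}^n$. Set $\omega(x,t) := P_k((x,t)+\gamma p)$; translation invariance makes $\omega$ a solution of~\eqref{P_kE}, hence in particular a sup-solution. The inductive hypothesis supplies the quantitative monotonicity required by Lemma~\ref{bound-data}, producing $\omega \geq (1+\tau\gamma) P_k$ on the level set $\{P_k = \varepsilon\}$ with $\tau$ proportional to $\cos\langle p, \hat{\nabla} P_k(\mu, -2r)\rangle$. Lemma~\ref{incre-rate} then propagates this inequality into the low-pressure layer $\{P_k \leq \varepsilon\}$, and Lemma~\ref{ini-rate}, applied with a smooth bump $\phi$ satisfying $\phi \geq c r^2 \kappa \tau \gamma$ on $B_r$, upgrades it to the pointwise push
\begin{equation*}
P_k\bigl((x,t)+\gamma p\bigr) \geq P_k\bigl(x + (t+2r)\phi(x)\mu,\, t\bigr) \quad \text{in } A_{2r}.
\end{equation*}
The mean-value theorem in the Lipschitz $\mu$-direction turns this into $\hat{\nabla}_p P_k \geq c' \tau$ on $A_r$ for a dimensional constant $c'$.

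The next stage is the cone-widening step. Using the definition of $\tau$ from Lemma~\ref{bound-data} one rewrites the bound as $\cos\langle p, \hat{\nabla} P_k\rangle \geq (C'/L)\cos\langle p, \hat{\nabla} P_k(\mu, -2r)\rangle$, while the inductive hypothesis forces $\cos\langle p, \hat{\nabla} P_k(\mu, -2r)\rangle \geq J^k/(2L^2)$. For any direction $q$ lying within a $\rho_k(p)$-neighborhood of $p$ with $\rho_k(p) := (C'/8L)\cos\langle p, \hat{\nabla} P_k(\mu, -2r)\rangle$, the elementary identity
\begin{equation*}
\cos\langle q, \hat{\nabla} P_k\rangle = \cos\langle q, p\rangle \cos\langle p, \hat{\nabla} P_k\rangle + \sin\langle q, p\rangle \sin\langle p, \hat{\nabla} P_k\rangle
\end{equation*}
combined with the crude bound $\sin\langle q, p\rangle \leq \rho_k(p)$ preserves a comparable lower bound for $\cos\langle q, \hat{\nabla} P_k\rangle$, whence $\hat{\nabla}_q P_k \geq (c_2 C'/(8L^3)) J^k$ on $A_r \cap \{P_k>0\}$. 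Choosing $J \leq \min\{c_2 C'/(4L^2), r\}$ and passing to $P_{k+1}(x,t) := J^{-1} P_k(Jx, Jt)$ transfers the estimate from $A_r$ back to $A_1$ with opening angle $\theta_{k+1} = \theta_k + s(\pi/2 - \theta_k)$ for some fixed $s \in (0,1)$, closing the induction.

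The main obstacle is a joint choice of the small parameters $J, s, \varepsilon, r, \kappa, \delta_3$ that works \emph{uniformly in $k$}. The rescaled equation~\eqref{P_kE} carries a lower-order source $(m-1)\delta_3 J^k P_k G(\delta_3 J^k P_k)$ which the sub- and sup-solution comparisons in Lemmas~\ref{incre-rate} and~\ref{ini-rate} can absorb only if $\delta_3$ is fixed small once and for all, and the resulting constants must still dominate $K_G$ from~\eqref{KG}. Simultaneously, $s$ has to be tuned so that the angular gain $\rho_k(p) \gtrsim J^k$ at stage $k$ survives the rescaling by $J$ and still delivers the geometric rate $s^k(\pi/2 - \theta_0)$ demanded by the definition of $\theta_k$ without eroding the prefactor $\frac{1}{2L}$; balancing these competing constraints is the most delicate part of the argument.
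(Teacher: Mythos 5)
Your proposal is correct and follows essentially the same route as the paper: induction on $k$, the chain Lemma~\ref{bound-data} $\to$ Lemma~\ref{incre-rate} $\to$ Lemma~\ref{ini-rate} applied to the translate $\omega=P_k(\cdot+\gamma p)$, the mean-value bound $\hat{\nabla}_pP_k\geq c'\tau$ on $A_r$, the cone-widening via the angle-addition identity with radius $\rho_k(p)$, and the rescaling $P_{k+1}=J^{-1}P_k(J\cdot,J\cdot)$ with $J\leq\min\{c_2C'/(4L^2),r\}$. Your added remarks on the base case and on choosing $\delta_3,J,s$ uniformly in $k$ are sensible refinements of points the paper leaves implicit, but the argument is the same.
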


\subsection*{Acknowledegements} The author would thank Mr. Zhennan Zhou (BICMR) and Mr. Jiajun Tong (BICMR) for helpful discussion and warm hospitality.

\appendix 
\section*{Appendix: proof of Lemmas~\ref{l18}-\ref{l19} }\label{app A}
\begin{proof}[\underline{\textbf{Proof of Lemma~\ref{l18}}}]
For simplicity, we denote $x-x_0$ by $x$ and $t-t_0$ by $t$. Taking a rescaling, we consider the function
\begin{equation*}
\tilde{P}(x,t)=\frac{\sigma}{R^2}P(Rx,\sigma t),
\end{equation*}
which satisfies the following equation
\begin{equation}\label{a11}
\begin{aligned}
\partial_t\tilde{P}&=(m-1)\tilde{P}\Delta\tilde{P}+\nabla\tilde{P}\cdot\nabla\tilde{P}+(m-1)\sigma\tilde{P}G(\frac{R^2}{\sigma}\tilde{P})\\
&=(m-1)\sigma\tilde{P}(\Delta P+G(P))+|\nabla\tilde{P}|^2.
\end{aligned}
\end{equation}
Furthermore, we have
\begin{align}
&\tilde{P}(x,0)=0,\quad x\in B_{1},\label{a6}\\
&\oint_{B_1}\tilde{P}(x,1)dx\leq c_0.\label{a7}
\end{align}
By the AB estimate (Lemma \ref{lAB}), we obtain
\begin{align}
&\partial_t\tilde{P}\geq -(m-1)\eta C_0\tilde{P}\geq-\varepsilon\tilde{P},\label{a8}\\
&\Delta\tilde{P}=\sigma\Delta P=\sigma\big(\Delta P+G(P)\big)-\sigma G(P)\geq -\eta C_0'\geq -\varepsilon,\label{a9}
\end{align}
where $\varepsilon=\eta\max\{(m-1)C_0,\ C_0'\}$.
The inequality \eqref{a9} implies
\begin{equation*}
\Delta(\tilde{P}+\frac{\varepsilon}{2n}|x|^2)\geq0.
\end{equation*}
Consequently, for any $x\in B_{1/2}$, we have
\begin{align*}
\tilde{P}(x,1)+\frac{\varepsilon}{2n}|x|^2&\leq\oint_{B_{1/2}(x)}(\tilde{P}(\xi,1)+\frac{\varepsilon}{2n}|\xi|^2)d\xi\\
&\leq 2^n\oint_{B_1}\tilde{P}(\xi,1)d\xi+\frac{\varepsilon}{8n}.
\end{align*}
Using \eqref{a7}, we conclude
\begin{equation}\label{a10}
\tilde{P}(x.1)\leq 2^nc_0+\frac{\varepsilon}{A_n},\  x\in B_{1/2}, \  A_n=8n.
\end{equation}
By direct computations, it follows from \eqref{a8} that
\begin{equation}\label{me}
\tilde{P}(x,1)\geq e^{-\varepsilon(1-t)}\tilde{P}(x,t),\quad t\in[0,1].
\end{equation}
Combining \eqref{me} with \eqref{a10}, we get
\begin{equation}\label{a13}
\tilde{P}(x,t)\leq e^{\varepsilon}(2^nc_0+\frac{\varepsilon}{A_n}),\ x\in B_{1/2},\ 0<t<1.
\end{equation}

We construct a special super-solution to Eq. \eqref{a11} to achieve our goal.
Consider the function
\begin{equation*}
V(x,t)=\lambda[\frac{t}{36}+\frac{1}{6}(|x|-\frac{1}{3})]_+.
\end{equation*}
It satisfies
\begin{equation}\label{a12}
\partial_t V\geq (m-1)V(\Delta V+\sigma G(0))+\nabla V\cdot\nabla V
\end{equation}
on the support of $V$, if and only if
\begin{equation*}
1\geq 6(m-1)[\frac{t}{6}+|x|-\frac{1}{3}](\frac{\lambda(n-1)}{6|x|}+\sigma G(0))+\lambda
\end{equation*}
holds. It is easy to know $V(x,t)>0$ if and only if
\begin{equation*}
|x|\geq \frac{1}{3}-\frac{t}{6}.
\end{equation*}
Thus \eqref{a12} is valid provided $\lambda=\lambda(m,n)$ and $\eta(>\sigma)$ are small if $0\leq t\leq1$.

In the following, we verify the initial and boundary conditions such that $V(x,t)$ is a super-solution on the cylinder $B_{1/2}\times [0,1]$.

When $t=0$ and $|x|<\frac{1}{2}$, we have
\begin{equation*}
\tilde{P}=0\leq V,
\end{equation*}
and when $|x|=\frac{1}{2}$ and $0<t<1$, we have
\begin{equation*}
\tilde{P}\leq\frac{\lambda}{36}\leq V,
\end{equation*}
which is valid if $\varepsilon$ and $c_0$ are sufficiently small in~\eqref{a13}. 
 
 Then, it holds by the comparison principle for the parabolic equation~\eqref{a11} that
   \begin{equation*}
   V\geq \tilde{P},\ (x,t)\in B_{1/2}\times[0,1].
   \end{equation*}
   Hence, in particular, we obtain
   \begin{equation*}
   \tilde{P}(x,t)\leq V(x,t)=0\quad\text{for } |x|\leq \frac{1}{6},
   \end{equation*}
and the proof is completed.
\end{proof}

\begin{proof}[\underline{\textbf{Proof of Lemma~\ref{l19}}}]
Without loss of generality, we set $(x_0,t_0)=(0,0)$. We consider re-scaled formula of the density as
\begin{equation*}
\tilde{\rho}(x,t):=(\frac{\sigma}{R^2})^{\frac{1}{m-1}}\rho(Rx,\sigma t),
\end{equation*}
then $\tilde{\rho}$ solves the following re-scaled density equation
\begin{equation*}
\partial_t\tilde{\rho}=\Delta \tilde{\rho}^m+\sigma \tilde{\rho} G(\frac{R^2}{\sigma}\tilde{P})
\end{equation*}
where $\tilde{P}=\frac{m}{m-1}\tilde{\rho}^{m-1}$. The AB estimate on $\tilde{v}$ as in \eqref{a9} implies that
\begin{equation*}
\Delta \tilde{\rho}^m\geq -\varepsilon\tilde{\rho}
\end{equation*}
where $\varepsilon=\eta\max\{(m-1)C_0,\ C_0'\}$.

 Let us define
 \begin{equation*}
 \varphi(t):=\oint_{B_{1}} \tilde{\rho}^m(x,t)dx.
 \end{equation*}
 We show that $\varphi(t)$ for $t\in[0,\lambda]$ stays strictly positive. By direct computations, we have
 \begin{align*}
 \varphi(0)=\frac{\sigma}{R^2}\oint_{B_R}\rho^{m}(x,0)dx\geq \nu.
 \end{align*}
 Due to \eqref{a8}, we have
 \begin{equation}\label{a15}
 \partial_t\tilde{\rho}^m\geq-\varepsilon\tilde{\rho}^m.
 \end{equation}
 Consequently, we obtain
\begin{equation}\label{a16}
\varphi(t)\geq e^{-\varepsilon t} \varphi(0)\geq e^{-\varepsilon \lambda}\nu\quad \text{for}\ t\in[0,\lambda].
\end{equation}

Next, we construct a differential inequality equation to obtain a upper bound for the growth of $\varphi$ over time.

As in \cite{CF1979,CF1980}, we introduce the Green's function in a ball $B_{1}$ so that $G_{1}$ solves
\begin{equation*}
\Delta G_{1}=\alpha_n \textbf{1}_{B_{1}}-\gamma_n\delta(x)\quad \text{and} \quad G_1=|\nabla G_1|=0,\quad\text{on }\partial B_1,
\end{equation*}
where $\gamma_n$ is a positive constant depending only on $n$, $\textbf{1}_A$ is the characteristic function of a given set $A$, $\delta(x)$ is the Dirac measure  and $\alpha_n=n(n-2)$. According to the Green's function on $B_1$, $G_{1}$ as in \cite{CF1980} is defined as
\begin{equation*}
G_{1}(x)=
\begin{cases}
|x|^{2-n}-1+\frac{n-2}{2}(|x|^2-1),&n\geq 3,\\
\log\frac{1}{|x|}-\frac{1}{2}(1-|x|^2),&n=2.\\
\end{cases}
\end{equation*}
Then, for $0<t<1$, we have
\begin{align*}
\int_{B_1}G_1\tilde{\rho}dx\geq& \int_{0}^{t}\int_{B_1}G_1(x)\tilde{\rho}_s(x,s)dxds\\
=&\int_{0}^{t}\int_{B_1}G_1(x)\Delta\tilde{\rho}^mdxds+\sigma\int_{0}^{t}\int_{B_1}G_1(x)\tilde{\rho}G(\frac{R^2}{\sigma}\tilde{P})dxds\\
\geq&\int_{0}^{t}\int_{B_1}G_1(x)\Delta\tilde{\rho}^mdxds+\sigma G(P_H)\int_{0}^{t}\int_{B_1}G_1(x)\tilde{\rho}dxds\\
=&-\gamma_n\int_{0}^{t}\tilde{\rho}^m(0,s)ds+\alpha_n\int_{0}^{t}\int_{B_1}\tilde{\rho}^m(x,s)dxds+\sigma G(P_H)\int_{0}^{t}\int_{B_1}G_1(x)\tilde{\rho}dxds,
\end{align*}where $G(P)\geq G(P_H)$ on $[0,P_H]$ defined in \eqref{G} is used.
Hence, it follows
\begin{equation*}
\begin{aligned}
\int_{B_1}&G_1(x)\tilde{\rho}(x,t)dx-\sigma G(P_H)\int_{0}^{t}\int_{B_1}G_1(x)\tilde{\rho}dxds\\
&\geq-\gamma_n\int_{0}^{t}\tilde{\rho}^m(0,s)ds+\alpha_n\int_{0}^{t}\int_{B_1}\tilde{\rho}^{m}(x,s)dxds.
\end{aligned}
\end{equation*}

Let us recall the definition of $\varphi(t)$ as
\begin{equation*}
\varphi(t)=\int_{B_1}\tilde{\rho}^m(x,t)dx.
\end{equation*}
 Parallel to \cite[Lemma 2.3]{CF1980} with the dimension $n\geq2$ and the diffusion exponent $m>1$, we have 
 $$\int_{B_1}G_1(x)\tilde{\rho}(x,t)dx\leq \tilde{C}_1\varepsilon^{\delta}+\tilde{C}_2(\varphi(t))^{\frac{1}{m}},$$
then it holds by Young's inequality that 
\begin{equation}\label{a17}
\int_{0}^{t}\varphi(s)ds\leq C_1\int_{0}^{t}\tilde{\rho}^m(0,s)ds+C_2[\varepsilon^{\delta}+\varepsilon\lambda(\varepsilon^\delta+1)]+C_3(\varphi(t))^{\frac{1}{m}},
\end{equation}
where $\tilde{C}_1$, $\tilde{C}_2$, $C_1$, $C_2$, $C_3$ and $\delta$ are positive constants depending only on $m,n$.

   Now, assume that \eqref{a14} is not satisfied, that is
   \begin{equation*}
   \tilde{\rho}^m(0,\lambda)\leq c.
   \end{equation*}
   It follows from \eqref{a15} that
   \begin{equation*}
   \tilde{\rho}^{m}(0,t)\leq e^{\varepsilon \lambda}c\quad\text{for}\ 0\leq t\leq\lambda.
   \end{equation*}
   Set
   \begin{equation*}
   \tilde{C}:=\frac{(C_1 \lambda c e^{\varepsilon\lambda}+C_2[\varepsilon^{\delta}+\varepsilon\lambda(\epsilon^\delta+1)])e^{\varepsilon\lambda/m}}{\nu^{1/m}},
   \end{equation*}
   by means of \eqref{a16} and \eqref{a17}, we conclude
   \begin{equation*}
   \int_{0}^{t}\varphi(s)ds\leq \frac{1}{B}(\varphi(t))^{1/m}\quad\text{with }\frac{1}{B}=C_3+\tilde{C}.
   \end{equation*}

    Let
   \begin{equation*}
   \Psi(t):=\int_{0}^{t}\varphi(s)ds,
   \end{equation*}
we have
   \begin{equation}\label{a18}
   \Psi'(t)\geq(B\Psi(t))^{m}.
   \end{equation}

   From \eqref{a16}, it follows
   \begin{equation*}
   \Psi(t)\geq At,\ A=\nu e^{-\varepsilon\lambda}.
   \end{equation*}
   We can construct a sub-function $\chi(t)$ of $\Psi(t)$ by solving the following ODE:
   \begin{equation}\label{a19}
   \chi'(t)=(B\chi(t))^{m},\ t>t_0,\ \chi(t_0)=A t_0,
   \end{equation}
it holds by the comparison principle that
\begin{equation}\label{a20}
\Psi(t)\geq\chi(t),\ t_0\leq t\leq\lambda.
\end{equation}
By directly solving ODE \eqref{a19},  we obtain
\begin{equation*}
(m-1)\chi^{m-1}(t)=\frac{1}{C-B^mt}
\end{equation*}
where $C$ is a constant determined by
\begin{equation*}
 \frac{1}{C-B^m t_0}=(m-1)A^{m-1}t_0^{m-1}.
 \end{equation*}
 Since
 \begin{equation*}
 \chi(t)\to\infty,\ as\ t\to\frac{C}{B^m},
 \end{equation*}
 but \eqref{a20} implies
 \begin{equation*}
 \Psi(t)\to\infty,\ as\ t\to\frac{C}{B^m}
 \end{equation*}
 under the assumption $\lambda\geq \frac{C}{B^m}$, which is contradicted with the bound of $\Psi$. Thus, \eqref{a14} must be valid provided
 \begin{equation}\label{a21}
 \lambda\geq \frac{C}{B^m}=\frac{e^{\varepsilon\lambda(m-1)}}
 {(m-1)\nu^{m-1}t_0^{m-1}B^m}+t_0
 \end{equation}
 where the definitions of $A,C$ were used.

 In addition, choosing $t_0=\frac{\lambda}{2}$, we  verify by the definition of $B,c$ that \eqref{a21} satisfies
 \begin{equation*}
 \lambda^{m}\nu^{m-1}\geq\frac{2^{m}e^{m-1}}{m-1}(\frac{(C_1 \lambda c e^{\varepsilon\lambda}+C_2[\varepsilon^{\delta}+\varepsilon\lambda(\varepsilon^\delta+1)])e^{\varepsilon\lambda/m}}{\nu^{1/m}}+C_3)^{m}.
 \end{equation*}
Therefore, \eqref{a22} guarantees that~\eqref{a21} holds. The proof is completed.
\end{proof}

%

\end{document}